\documentclass[10pt]{article}

\usepackage[letterpaper, top=24mm, bottom=24mm, left=24mm, right=24mm, includefoot]{geometry}

\usepackage{latexsym,
amsthm,
color,
amssymb,
url,
bm,
cite,
amssymb,
microtype,
amsmath,
amsfonts,
mathtools}
\usepackage{verbatim,listings}
\usepackage{todonotes}
\usetikzlibrary{calc}
\usepackage{tikz}
\tikzstyle{uStyle}=[shape = circle, minimum size = 2pt, inner sep =2.5pt, outer sep = 0pt, draw, fill=white]
\usetikzlibrary{decorations.pathmorphing}
\usetikzlibrary{decorations.markings, calligraphy}
\usetikzlibrary{bbox}

\usepackage{xcolor}
\definecolor{mypurple}{RGB}{208,134,255}
\definecolor{myblue}{RGB}{10,120,253}
\definecolor{myredred}{RGB}{163,0,0}
\colorlet{myred}{myredred!70}
\definecolor{mygreen}{RGB}{126,198,52}
\definecolor{myothergreen}{RGB}{0,153,0}
\definecolor{myorange}{RGB}{244,154,33}
\definecolor{myyellow}{RGB}{225,225,2}
\definecolor{mydarkgreen}{RGB}{51,102,0}
\definecolor{mydarkpurple}{RGB}{127,0,225}

\usepackage{caption}
\usepackage{setspace}
\usepackage{subcaption}
\usepackage{setspace}

 \usepackage[pdftex, plainpages = false, pdfpagelabels, 
                 bookmarks = true,
                 bookmarksopen = true,
                 bookmarksnumbered = true,
                 breaklinks = true,
                 linktocpage,
                 pagebackref,
                 colorlinks = true,  
                 linkcolor = blue,
                 urlcolor  = blue,
                 citecolor = red,
                 anchorcolor = green,
                 hyperindex = true,
                 hyperfigures
                 ]{hyperref} 
                     
\usepackage[inline]{enumitem}
\usepackage[mathscr]{euscript}
\usepackage{amsthm}
\usepackage{nicefrac}
\usepackage{dsfont}
\usepackage{tikz}
\usepackage[ruled,linesnumbered]{algorithm2e}
\usepackage{nicefrac}
\usepackage[mathscr]{euscript}
\graphicspath{{./figures/}}

\RequirePackage[T1]{fontenc}
\RequirePackage[utf8]{inputenc}
\usepackage[english]{babel}
\usepackage{alphabeta}

\usetikzlibrary{math}

\makeatletter
\input{colordvi}

\usepackage{aliascnt}

\definecolor{MidnightBlack}{rgb}{0.1,0.1,.34}
\definecolor{MidnightBlue}{rgb}{0.1,0.1,0.43}
\definecolor{Black}{rgb}{0,0, 0}
\definecolor{Blue}{rgb}{0, 0 ,1}
\definecolor{Red}{rgb}{1, 0 ,0}
\definecolor{White}{rgb}{1, 1, 1}
\definecolor{grey}{rgb}{.6, .6, .6}
\definecolor{Mygreen}{rgb}{.0, .7, .0}
\definecolor{Yellow}{rgb}{.55,.55,0}
\definecolor{Mustard}{rgb}{1.0, 0.86, 0.33}
\definecolor{applegreen}{rgb}{0.55, 0.71, 0.0}
\definecolor{darkturquoise}{rgb}{0.0, 0.81, 0.82}
\definecolor{celestialblue}{rgb}{0.29, 0.59, 0.82}
\definecolor{green_yellow}{rgb}{0.68, 1.0, 0.18}
\definecolor{crimsonglory}{rgb}{0.75, 0.0, 0.2}
\definecolor{darkmagenta}{rgb}{0.30, 0.0, 0.30}
\definecolor{magenta}{rgb}{0.50, 0.0, 0.50}
\definecolor{internationalorange}{rgb}{1.0, 0.31, 0.0}
\definecolor{darkorange}{rgb}{1.0, 0.55, 0.0}
\definecolor{ao}{rgb}{0.0, 0.5, 0.0}
\definecolor{awesome}{rgb}{1.0, 0.13, 0.30}
\definecolor{darkcyan}{rgb}{0.0, 0.50, 0.50}
\definecolor{violet}{rgb}{0.93, 0.51, 0.93}
\definecolor{brown}{rgb}{0.65, 0.16, 0.16}
\definecolor{orange}{rgb}{1.0, 0.65, 0.0}
\definecolor{cornflowerblue}{rgb}{0.39, 0.58, 0.93}

\newcommand{\cref}[1]{\autoref{#1}}

\newcommand{\revcommentref}[1]{\hyperref[#1]{\textbf{Comment:\ }}}

\newcommand{\remove}[1]{}

\newcommand{\Acal}{\mathcal{A}}

\newcommand{\Ccal}{\mathcal{C}}
\newcommand{\Dcal}{\mathcal{D}}
\newcommand{\Ecal}{\mathcal{E}}

\newcommand{\Gcal}{\mathcal{G}}
\newcommand{\Hcal}{\mathcal{H}}

\newcommand{\Mcal}{\mathcal{M}}

\newcommand{\Ocal}{\mathcal{O}}

\newcommand{\Qcal}{\mathcal{Q}}

\newcommand{\Tcal}{\mathcal{T}}
\newcommand{\Ucal}{\mathcal{U}}

\newcommand{\Xcal}{\mathcal{X}}
\newcommand{\Ycal}{\mathcal{Y}}
\newcommand{\Zcal}{\mathcal{Z}}
\newcommand{\Abbb}{\mathbb{A}}
\newcommand{\Bbbb}{\mathbb{B}}
\newcommand{\Cbbb}{\mathbb{C}}

\newcommand{\Lbbb}{\mathbb{L}}

\newcommand{\Nbbb}{\mathbb{N}}

\newcommand{\Qbbb}{\mathbb{Q}}

\newcommand{\Ybbb}{\mathbb{Y}}

\RequirePackage{stmaryrd}
\usepackage{textcomp}
\DeclareUnicodeCharacter{2286}{\subseteq}
\DeclareUnicodeCharacter{2192}{\ifmmode\to\else\textrightarrow\fi}
\DeclareUnicodeCharacter{2203}{\ensuremath\exists}
\DeclareUnicodeCharacter{183}{\cdot}
\DeclareUnicodeCharacter{2200}{\forall}
\DeclareUnicodeCharacter{2264}{\leq}
\DeclareUnicodeCharacter{2265}{\geq}
\DeclareUnicodeCharacter{8614}{\mathbin{\mapsto}}
\DeclareUnicodeCharacter{8656}{\Leftarrow}
\DeclareUnicodeCharacter{8657}{\Uparrow}
\DeclareUnicodeCharacter{8658}{\Rightarrow}
\DeclareUnicodeCharacter{8659}{\Downarrow}
\DeclareUnicodeCharacter{8669}{\rightsquigarrow}
\newcommand{\eqdef}{\stackrel{{\scriptsize\rm def}}{=}}
\DeclareUnicodeCharacter{8797}{\eqdef}
\DeclareUnicodeCharacter{8870}{\vdash}
\DeclareUnicodeCharacter{8873}{\Vdash}
\DeclareUnicodeCharacter{22A7}{\minor models}
\DeclareUnicodeCharacter{9121}{\lceil}
\DeclareUnicodeCharacter{9123}{\lfloor}
\DeclareUnicodeCharacter{9124}{\rceil}
\DeclareUnicodeCharacter{2208}{\in}
\DeclareUnicodeCharacter{9126}{\rfloor}
\DeclareUnicodeCharacter{9655}{\triangleright}
\DeclareUnicodeCharacter{9665}{\triangleleft}
\DeclareUnicodeCharacter{9671}{\diamond}
\DeclareUnicodeCharacter{9675}{\circ}
\DeclareUnicodeCharacter{10178}{\bot}
\DeclareUnicodeCharacter{10214}{} % needs stmaryrd
\DeclareUnicodeCharacter{10215}{} % needs stmaryrd
\DeclareUnicodeCharacter{10229}{\longleftarrow}
\DeclareUnicodeCharacter{10230}{\longrightarrow}
\DeclareUnicodeCharacter{10231}{\longleftrightarrow}
\DeclareUnicodeCharacter{10230}{\Longleftarrow}
\DeclareUnicodeCharacter{10233}{\Longrightarrow}
\DeclareUnicodeCharacter{10234}{\Longleftrightarrow}
\DeclareUnicodeCharacter{10236}{\longmapsto}
\DeclareUnicodeCharacter{10238}{\Longmapsto} % needs stmaryrd
\DeclareUnicodeCharacter{10503}{\Mapsto}    % needs stmaryrd
\DeclareUnicodeCharacter{10971}{\mathrel{\not\hspace{-0.2em}\cap}}
\DeclareUnicodeCharacter{65294}{\ldotp}
\DeclareUnicodeCharacter{65372}{\mid}

\newtheorem{theorem}{Theorem}[section]

\newaliascnt{question}{theorem}

\aliascntresetthe{question}

\newaliascnt{lemma}{theorem}
\newtheorem{lemma}[lemma]{Lemma}
\aliascntresetthe{lemma}

\newaliascnt{remark}{theorem}

\aliascntresetthe{remark}

\newaliascnt{claim}{theorem}
\newtheorem{claim}[claim]{Claim}
\aliascntresetthe{claim}

\newaliascnt{invariant}{theorem}

\aliascntresetthe{invariant}

\newaliascnt{proposition}{theorem}
\newtheorem{proposition}[proposition]{Proposition}
\aliascntresetthe{proposition}

\newaliascnt{observation}{theorem}
\newtheorem{observation}[observation]{Observation}
\aliascntresetthe{observation}

\newaliascnt{corollary}{theorem}
\newtheorem{corollary}[corollary]{Corollary}
\aliascntresetthe{corollary}

\newaliascnt{definition}{theorem}

\aliascntresetthe{definition}

\newaliascnt{conjecture}{theorem}
\newtheorem{conjecture}[conjecture]{Conjecture}
\aliascntresetthe{conjecture}

\newaliascnt{counterexample}{theorem}

\aliascntresetthe{counterexample}

\newcommand{\hh}{\end{document}}

\newcommand{\obs}{{\sf obs}}
\newcommand{\excl}{{\sf excl}}
\newcommand{\minors}{{\sf minors}}
\newcommand{\ex}{\textsf{ex}}
\newcommand{\cobs}{\mbox{\rm \textsf{cobs}}}
\newcommand{\gall}{\mathcal{G}_{{\text{\rm  \textsf{all}}}}}
 \newcommand{\lin}[1]{\langle #1\rangle}% linear society

\newcommand{\tw}{\mathsf{tw}\xspace}%
\newcommand{\poly}{\mathsf{poly}\xspace}%

\newenvironment{claimproof}[1][\proofname]{%
 \begin{proof}[#1]%
}{%
 \end{proof}%
}

\begin{document}

\title{Obstructions for Minor-Closed Classes of \\ limiting Densities Below $\nicefrac{3}{2}$\thanks{Emails of authors: 
\texttt{akominato@gmail.com},  \texttt{rm7230@nyu.edu}, and   \texttt{sedthilk@thilikos.info}.}}

\author{Antonios Kominatos\thanks{Department of Mathematics, National and Kapodistrian University of Athens, Athens, Greece.}~$^{,}$\thanks{Inter-Institutional Graduate Program ``Algorithms, Logic, and Discrete Mathematics'' (ALMA), Athens, Greece.}~$^{,}$\thanks{Supported by the Scientific Research Network ``Graphs, Association schemes and Geometries: structures, algorithms and computation'' (grant no. W003324N) of the Research Foundation - Flanders (Belgium).}\and Reem Mahmoud\thanks{NYUAD, CS-program, Abu Dhabi, UAE.}  \and Dimitrios M. Thilikos\thanks{LIRMM, Univ Montpellier, CNRS, Montpellier, France.}~$^{,}$\thanks{Supported by the Franco-Norwegian project PHC AURORA 2024-25 and the French National Research Agency (ANR)  under project GODASse ANR-24-CE48-4377 and under the France 2030 grant reference number
ANR-24-RRII-0002 operated by the Inria Quadrant Program.}}

\date{\today}

\maketitle

\begin{abstract}
\noindent Given a graph class $\mathcal{G}$, the limiting density of $\mathcal{G}$ is defined as $\delta(\mathcal{G})=\lim_{n\to\infty} {\sf ex}(\mathcal{G},n)/n$ where  ${\sf ex}(\mathcal{G},n)$ is the maximum number of edges of a graph in $\mathcal{G}$ on $n$ vertices. The limiting density $\delta(\mathcal{G})$ is known to be a rational number  when $\mathcal{G}$ is a minor-closed graph class. 
For every $\delta\in[0,\nicefrac{3}{2})$, we prove that 
the set of $\subseteq$-minimal minor-closed graph  classes with densities $>\delta$ is \textsl{finite} and we identify it completely.
A consequence of our results is an algorithm that,
given a finite set of graphs $\mathcal{Z}$, of total size $n$, either outputs  
the value of $\delta(\excl(\mathcal{Z}))$ or reports that  $\delta(\excl(\mathcal{Z}))\geq \frac{3}{2}$, where 
$\excl(\mathcal{Z})$ is the class of graphs excluding  the graphs in $\mathcal{Z}$ as minors. The algorithm runs in $2^{\poly(n)}$ time.
\end{abstract}

\noindent{\textbf{Keywords:} Graph Minors, Limiting density, Obstruction set, Class property, Parametric graph.}

\newpage 

\tableofcontents
\newpage

\section{Introduction}

All graphs in this paper are finite and have no loops or multiple edges.
Given a graph $G$, we use $V(G)$ and $E(G)$ for the set of its vertices and edges, respectively. 
Given a graph class $\Gcal$ and a non-negative integer $n$ 
we define $\ex(\Gcal,n):=\max\{|E(G)|\mid G\in \Gcal, |V(G)|=n\}$, i.e., 
 
the maximum number of edges of an $n$-vertex graph in $\Gcal$.

\subsection{Limiting densities}

The  \emph{limiting density} of  an infinite graph class  $\Gcal$
is defined as  $$\delta(\Gcal)=\lim_{n\to\infty} \frac{{\sf ex}(\Gcal,n)}{n}.$$
If $\Gcal$ is a finite class then the limiting density of 
$\Gcal$ is $- \infty$.
In this paper we  
 
study limiting densities of minor-closed graph classes (see \cref{psredlds} for definitions on the minor relation).
A minor-closed 
graph class $\Gcal$ is \emph{proper} if it does not 
contain all graphs.
Given that $\Gcal$ is proper, there is some graph $H$ that does not belong in $\Gcal$, therefore 
$\Gcal$ excludes a clique on $k=|V(H)|$ vertices as a minor. It was proved by Thomasson in \cite{Thomason01thee} that if a 
graph $G$ excludes $K_{k}$ as a minor then 
  $|E(G)|/|V(G)|\leq (\alpha+o(1))\cdot k\cdot\sqrt{ \log k}$, where $\alpha\approx 0.319$, therefore $\delta(\Gcal)$ is always a non-negative real number (see also \cite{Mader67Homomorphieeigenschaften,Kostochka84,Thomason84Anextrema,ThomasonW22Onthe,Thomason06Extremal}). 
  
  We denote by $\Lbbb$ the set of all limiting densities of infinite proper
minor-closed graph classes.  

The study of  the set $\Lbbb$ was initiated by David Eppstein in \cite{Eppstein10Densities}, where he proved that 
\begin{eqnarray}
\Lbbb\cap [0,\nicefrac{3}{2}) & = & \{\frac{x}{x+1}\mid x\in \Nbbb\}\cup\{\frac{3x+2y}{2x+y+1}\mid x\in\Nbbb_{\geq 1}, y\in\{0,1,2\}\}.    
\label{eq_p_kil}
\end{eqnarray}

The above implies that $[0,\nicefrac{3}{2})$ contains two accumulation points, namely $1$ and $\frac{3}{2}$.
We denote by $\Bbbb$ the set of all accumulation points of $\Lbbb$.
Interestingly, Eppstein observed in \cite{Eppstein10Densities} that if $a\in \Lbbb$, then 
$a+1\in\Bbbb\cap\Lbbb$.

As a next step, McDiarmid and  Przykucki  in \cite{McDiarmidP19On}
completely identified $\Lbbb\cap {[0,2)}$ by proving that 
for every $k\in\Nbbb_{\geq 2}$  it holds that 
\vspace{-8mm}

\begin{eqnarray}
\begin{minipage}{13cm}
\begin{center}
\begin{eqnarray*}
\Lbbb\cap {\big[2-\frac{1}{k-1},2-\frac{1}{k}\big)} &=& \{2-\frac{1}{k-1}\}\cup\{2-\frac{1}{k}-\frac{2k-t-1}{kn} \}
\end{eqnarray*}
\end{center}
\end{minipage}&&\label{mosekrer}
\end{eqnarray}
{where} $n =mk+1+t, m  \geq   1, 0\leq t\leq   k-1, \text{~and~}n  >  (2k-1-t)(k-1).$
According to the above, $\Lbbb\cap {[0,2)}$ contains
infinitely many accumulation points, which are the points in $\Bbbb\coloneqq\{2-\frac{1}{k}\mid k\in \Nbbb_{\geq 1}\}$. 
Recently, Kapadia and Norin proved in \cite{KapadiaN20Densities} that   
all limiting densities are rational numbers, therefore $\Lbbb\subseteq\Qbbb$.
Moreover, they showed that 
 for each proper minor-closed class $\Gcal$ there exists a minor-closed class $\Gcal' \subseteq \Gcal$ of bounded pathwidth\footnote{We avoid the definition of pathwidth here. We just mention that 
a minor-closed graph class has bounded pathwidth iff it does not contain all acyclic graphs \cite{BienstockRST91Quickly,RobertsonS83GMI}.} such that $\delta(\Gcal') = \delta(\Gcal)$.

\subsection{Deciding limiting densities} The question that motivated this paper is the following:
\begin{eqnarray}
\begin{minipage}{14cm}
\textsl{Let $\Gcal$ be an infinite proper minor-closed graph class and $\delta$ be a  non-negative rational number. \\ Can we decide   whether $\delta(\Gcal)\leq \delta$ or whether $\delta(\Gcal)<\delta$?}
\end{minipage}
\label{cojhknssjtraint}
\end{eqnarray}

Notice that it is important to distinguish the questions $\delta(\Gcal)\leq \delta$ and  $\delta(\Gcal)<\delta$. The second question becomes particularly relevant when $\delta$ is an \textsl{accumulation point} of $\mathbb{L}$. This is because 
 \textsc{limiting Density $<\delta$} when $\delta$ is not an accumulation point 
is equivalent to \textsc{limiting Density $\leq \delta'$} where
$\delta'$ is the biggest limiting density   that is smaller than $\delta$.

In order to formulate the above questions algorithmically, we need a finite encoding of $\Gcal$.
A typical way to describe minor-closed graph classes is by  excluding as minors all graphs of some finite set $\Zcal$. The resulting minor-closed graph class is denoted by $\excl(\Zcal)$. Robertson and Seymour showed that every 
minor-closed graph class $\Gcal$  
has such a finite description, i.e, there is some finite $\Zcal$, called the \emph{minor-obstruction set} of $\Gcal$, such that $\Gcal=\excl(\Zcal)$ \cite{RobertsonS04GMXX}. 
We use $\obs(\Gcal)$
 for the minor-obstruction set of the class $\Gcal$ and observe that $\Gcal=\excl(\obs(\Gcal)$).
Moreover, we may assume that $\Zcal$ is a \emph{minor anti-chain}, that is, 
its elements are pairwise non-comparable under the minor relation.
For every non-negative rational number $\delta$, we may now formulate the question in \eqref{cojhknssjtraint} algorithmically as follows. 

\begin{eqnarray}
\fbox{
\begin{minipage}{11cm}
\textsc{limiting Density $\leq \delta$ ($<\delta$)}

\noindent\textsl{Input}: A minor anti-chain $\Zcal$ and a non-negative rational number  $\delta$.

\noindent\textsl{Question}: Is the limiting density of $\excl(\Zcal)$ at most (strictly less than) $\delta$?
\end{minipage}}\label{probl_reset}
\end{eqnarray}

As a measure of the input sizes of the above problems we define $n(\Zcal)  = \sum_{Z\in\Zcal}|V(Z)|$, i.e., the sum of the sizes of the graphs in $\Zcal$.

\subsection{Our results}
In this paper, we prove that, for every value 
$\delta\in[0,\nicefrac{3}{2})$,  
\textsc{limiting Density $\leq \delta$} can be solved in time linear 
on the size $n(\Zcal)$ of the input. We also prove the same for 
\textsc{limiting Density $<\delta$} when $\delta$ is one of the two accumulation points in $\frak{B}\cap [0,\nicefrac{3}{2}]$, i.e., $\delta=1$ and $\delta=\frac{3}{2}$. 
  Our algorithms come as a consequence 
of a series of  combinatorial results that we explain below.

\paragraph{Class properties on limiting densities.} Given some non-negative rational $\delta$, we define $\Cbbb_{\leq \delta}$ (resp. $\Cbbb_{<\delta}$) as the set of all minor-closed graph classes whose limiting density is at most (resp. strictly less than) $\delta$.

We define the \emph{class obstruction} of $\Cbbb_{\leq \delta}$, denoted $\cobs(\Cbbb_{\leq \delta})$,
as the set of all $\subseteq$-minimal minor-closed graph classes 
that do not belong in $\Cbbb_{\leq \delta}$.
We define $\cobs(\Cbbb_{<\delta})$ analogously. It is worth noting that we have no general 
theory implying that $\cobs(\Cbbb_{\leq \delta})$ (or $\cobs(\Cbbb_{<\delta})$) is finite.
This would certainly follow if  
 
minor-closed graph classes were well-quasi-ordered with respect to the set inclusion relation.
However, this is an open problem that is known as the ``$\omega^2$-wqo conjecture'' for the minor relation  \cite{paul2023graph,paul2023universal,Jancar99ANote,Marcone01Fine}.

{Proving that \(\cobs(\Cbbb_{\le \delta})\) (or \(\cobs(\Cbbb_{<\delta})\)) is finite has algorithmic consequences. Indeed, as we will see in \cref{alg_ons}, knowing the finite obstruction sets for the finitely many classes in a class obstruction yields algorithms for the problems in \eqref{probl_reset}, based on finitely many minor-checking queries. In turn, using the recent result of \cite{KorhonenPS24Minor}, this implies that these problems can be solved in almost linear time; see also \cite{RobertsonS95b,KawarabayashiKR12Thedisjoint}.}\medskip

\paragraph{Combinatorics and algorithms on $\cobs(\Cbbb_{\leq \delta})$ and $\cobs(\Cbbb_{<\delta})$.}
In this paper, we make a first step towards the study of   $\cobs(\Cbbb_{\leq \delta})$ (resp. $\cobs(\Cbbb_{<\delta})$) by focusing our attention on the cases where $\delta$ is smaller (resp. strictly smaller) than $\frac{3}{2}$.  
In \cref{xclsoosocnbsutuidison}, we  provide exact descriptions 
of  $\cobs(\Cbbb_{\leq \delta})$, for $\delta\in[0,\nicefrac{3}{2})$, and  of $\cobs(\Cbbb_{<1})$ and $\cobs(\Cbbb_{<\nicefrac{3}{2}})$. It follows that $\cobs(\Cbbb_{<1})$ consists of three graph classes,
while $\cobs(\Cbbb_{<\nicefrac{3}{2}})$  consists of 33 graph classes. According to our results,  the size of $\cobs(\Cbbb_{\leq \delta})$, i.e., the number of class obstructions, can be bounded as follows:

\begin{itemize}
\item If $\delta\in [0,1)$, then 
$|\cobs(\Cbbb_{\leq \delta})|$ is 2 plus the number of trees on $\lfloor\frac{\delta}{1- \delta}\rfloor+2$ vertices.
\item If $\delta\in [1,\nicefrac{3}{2})$, then 
$|\cobs(\Cbbb_{\leq \delta})|\leq 32+n_1+n_2+n_3+n_4$
where 
\begin{itemize}
    \item $n_1$ is the number of connected graphs with $1+\lfloor \frac{\delta}{3-2\delta}\rfloor$ triangle blocks,
    \item $n_2$ is the number of connected graphs with $\max\{1+\lfloor \frac{4\delta-5}{3-2\delta}\rfloor,0\}$ triangle blocks with one block being the unique $2$-tree on $4$ vertices,
    \item $n_3$ is the number of connected graphs with $\max\{1+\lfloor \frac{5\delta-7}{3-2\delta}\rfloor,0\}$ triangle blocks with one block being a $2$-tree on $5$ vertices, and 
    \item $n_4$ is the number of connected graphs with $\max\{1+\lfloor \frac{7\delta-10}{3-2\delta}\rfloor,0\}$ triangle blocks with two blocks being the unique $2$-tree on $4$ vertices. 
\end{itemize}
\end{itemize}

In \cref{sec_ident}, for every  $\delta\in [0,\nicefrac{3}{2})$ and for each $\Ccal\in \cobs(\Cbbb_{\leq \delta})$,  we completely determine $\obs(\Ccal)$. When $\delta\in [0,1)$,
none of these obstructions  has size  bigger than $\lfloor\frac{\delta}{1- \delta}\rfloor+3$ and when $\delta\in [1,\nicefrac{3}{2})$,
none of these obstructions  has size  bigger than $2(1+\lfloor \frac{\delta}{3-2\delta}\rfloor)+1$.

Given the complete knowledge of all class obstructions 
for every $\delta\in[0,\nicefrac{3}{2})$, as well as of their own obstructions, in \cref{alg_ons}, we capitalize on the fact that 
the presence/absence of these obstructions  serves as a certificate for the negative/positive answer to \textsc{limiting Density $\leq \delta$} and \textsc{limiting Density $<\delta$}. This leads to the claimed linear-time algorithms.
Finally, using these algorithms as subroutines, 
we construct an algorithm that, given a $\delta \in [0,\nicefrac{3}{2})$
and an anti-chain $\Zcal$, either outputs 
the limiting density of $\excl(\Zcal)$
or reports that it is at least $\frac{3}{2}$
in time $2^{\poly(n)}$, where $n=n(\Zcal)$, i.e.,  time that is exponential to the size of the input $\Zcal$.\footnote{We use $\poly(n)$ as a shortcut for $n^{\Ocal(1)}$.}

\section{Definitions and preliminary results}
\label{psredlds}

 In this section we give a series of concepts and preliminary results that will be used in the rest of the paper.

\subsection{Basic concepts on graphs}
We denote by $\mathbb{N}$ the set of non-negative integers.
 
We use the notation $\Nbbb_{\geq n}=\{x\in \Nbbb\mid x\geq n\}$.
 Given some $p,q\in\Nbbb_{\geq1}$ with $p<q$ we denote by $[p]$ and $[p,q]$ the sets $\{1, \dots, p\}$ and $\{p,p+1,\dots,q\}$, respectively.

We now provide some elementary concepts on graphs that we use in this paper. 
For other concepts on graphs that are not defined neither here nor in the introduction, we 
adopt the notation of \cite{Diestel2010Book}.\medskip

We denote by $\gall$ the set of all graphs. Every graph class $\Gcal$ that is different from $\gall$ is called \emph{proper}. {Given a graph class $\Gcal$, we denote by $\Gcal_n$ the set of $n$-vertex graphs in $\Gcal$.}
 Given some  $X\subseteq V(G),$ we define  the subgraph of $G$ \emph{induced} by $X$ and denote it by   $G[X]=(X,\{e\in E(G)\mid e\subseteq X\})$. The vertex set $X$ is \emph{connected}  (in $G$) if $G[X]$ is a connected graph.

We say that a graph $G$ \emph{contains} a graph $Z$ as a \emph{minor}
if there exist a collection  $\Xcal=\{X_v\mid v\in V(Z)\}$ of pairwise disjoint connected vertex sets  of $G$ such that for every edge $vu\in E(Z)$ the set $X_{v}\cup X_{u}$ is connected. 
 We call  $\Xcal=\{X_v\mid v\in V(Z)\}$   a \emph{minor model of $Z$} in $G$. We write $Z\leq G$ to denote  that $Z$ is a minor of $G$. 
 Given some $S \subseteq V(G)$, we call a minor model $\Xcal$ of a graph $Z$ in $G$ an \emph{$S$-minor model} if every vertex set of the minor model contains at least one vertex of $S$. 
 A graph $Z$ is an \emph{$S$-minor} of $G$ if there exists a minor model of $Z$ in $G$ that is an $S$-minor model.

Given a graph $G$ and an edge $e$, we denote by $G/e$ the graph obtained from 
$G$ after contracting the edge $e$. The \emph{contraction} of an edge $e=xy$ is the replacement of $x$ and $y$ by a vertex $v_{xy}$ 
whose neighbors are the vertices in  $(N_{G}(x)\cup N_{G}(y))\setminus\{x,y\}$. 
Notice that $H$ is a minor of $G$ 
if and only if a graph isomorphic to $H$ can be obtained from a subgraph of $G$ after contracting a (possibly empty) set  of edges.

A \emph{bridge} in a graph is a subgraph 
that is isomorphic to $K_{2}$ where the removal of its unique edge increases the number of connected components.
A \emph{block} of a graph $G$ is a maximal 2-connected subgraph of $G$ or a bridge of $G$.  
A block in $G$ is a \emph{leaf block} if it is a leaf of $T$, i.e., if exactly one of its vertices is a cut vertex.

\subsection{Obstructions}

A  \emph{minor anti-chain} $\Zcal$ is a set of graphs that are pairwise non-comparable by $\leq $. 
Given a class $\Gcal$ of graphs, we define ${\sf min}(\Gcal)$ as the subset of $\Gcal$ containing the minor-minimal elements of $\Gcal$.
Notice that  ${\sf min}(\Gcal)$   is a minor anti-chain that, 
according to the Graph Minors Theorem of Robertson and Seymour \cite{RobertsonS04GMXX}, is always finite.

\paragraph{Obstructions.}
Given a minor-closed graph class $\Gcal$, we define  its \emph{obstruction set} as
the minor-minimal graphs that do not belong in $\Gcal$, i.e., the set 
 $\obs(\Gcal)={\sf min}(\gall\setminus \Gcal)$.  We also refer to the members of $\obs(\Gcal)$ as the \emph{minor-obstructions} of $\Gcal$.
 Given some minor anti-chain $\Zcal,$ we denote by $\excl(\Zcal)$ the set of 
 all graphs excluding every graph in $\Zcal$ as a minor.
 Notice that, for every minor anti-chain $\Zcal$, it holds that $\obs(\excl(\Zcal))=\Zcal$ and 
 for every minor-closed graph class $\Gcal$, we have $\excl(\obs(\Gcal))=\Gcal$.

Notice that $\Zcal$ contains some edgeless graph if and only if $\excl(\Zcal)$ is finite. Recall also that finite classes have limiting density $-\infty$.
Therefore we have the following.

\begin{observation}
 
\label{obs_nulnul}
 Let $\Zcal$ be a finite set of graphs.  It holds that  $\delta(\excl(\Zcal))\geq 0$  if and only if none 
 of the graphs in $\Zcal$ is edgeless.  
\end{observation}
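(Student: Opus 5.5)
The plan is to prove the two directions separately, reducing each to a statement about whether $\excl(\Zcal)$ is finite or infinite. The whole argument rests on the convention that a finite class has limiting density $-\infty$. For an infinite class, $\delta\geq 0$ holds automatically, since $\ex(\Gcal,n)\geq 0$ for every $n$ with $\Gcal_n\neq\emptyset$. So it suffices to show that $\excl(\Zcal)$ is infinite if and only if no graph in $\Zcal$ is edgeless. One subtlety: if $\excl(\Zcal)$ is infinite but misses some values of $n$, the limit must be read along those $n$ with $\Gcal_n\neq\emptyset$. Since we will show that every infinite such class contains all edgeless graphs, $\Gcal_n$ is nonempty for all $n$, so the issue never arises.

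Suppose first that no $Z\in\Zcal$ is edgeless. Then every $Z\in\Zcal$ has at least one edge. The edgeless graph $\overline{K_n}$ contains no graph with an edge as a minor, because minors are obtained from subgraphs by contractions and neither operation creates edges. Hence $\overline{K_n}\in\excl(\Zcal)$ for every $n$. The class is therefore infinite, with $\Gcal_n\neq\emptyset$ for all $n$, and $\ex(\excl(\Zcal),n)/n\geq 0$. Taking $n\to\infty$ gives $\delta(\excl(\Zcal))\geq 0$, assuming the limit exists, as the paper's definition presupposes; otherwise the same bound holds for the liminf.

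Conversely, suppose some $Z\in\Zcal$ is edgeless, say $Z=\overline{K_k}$. Any graph $G$ with at least $k$ vertices contains $\overline{K_k}$ as a minor: take any $k$ singleton vertex sets, which are disjoint and connected, and there are no edge conditions to check. So every graph in $\excl(\Zcal)$ has fewer than $k$ vertices, and the class is finite. By convention $\delta(\excl(\Zcal))=-\infty<0$.

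There is no real obstacle here. The only points needing care are the finite/infinite convention and the possible gaps in the sequence $\Gcal_n$ when taking the limit, and the first direction disposes of the latter.
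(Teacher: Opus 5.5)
Your proposal is correct and takes essentially the same route as the paper. The paper notes in one line that $\excl(\Zcal)$ is finite exactly when $\Zcal$ contains an edgeless graph, and combines this with the convention that finite classes have density $-\infty$. Your argument proves both directions of that equivalence explicitly and observes that infinite classes have nonnegative density.
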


 A \emph{class property} 
is a family $\Cbbb$ of  graph classes. It is \emph{subset closed} if every subclass of a class 
in $\Cbbb$ also belongs in $\Cbbb$. Throughout this paper, we always assume the following:
\begin{itemize}
\item All graph classes that we consider are proper minor-closed and 
\item All class properties are subset-closed and contain only proper minor-closed graph classes.
\end{itemize}

\paragraph{Second order obstructions.}
Given two minor anti-chains $\Zcal_{1}$ and $\Zcal_{2},$ we say that 
$\Zcal_{1}$   is a \emph{Smyth minor} of $\Zcal_{2}$, denoted $\Zcal_{1}\leq_{*}\Zcal_{2}$, if for every $Z_2\in \Zcal_{2}$
there is some $Z_1\in\Zcal_{1}$ such that $Z_{1}\leq Z_{2}$. The relation $\leq _{*}$ is known as the \emph{Smyth extension} 
of $\leq $.  
It is easy to verify  that, given two  graph classes 
$\Gcal^{1}$ and $\Gcal^{2}$, 
\begin{eqnarray}
\label{eq_oksui}
\Gcal^{1}\subseteq \Gcal^{2}\iff \obs(\Gcal^{1})\leq_{*}\obs(\Gcal^{2}). \label{ppog99fdhye}
\end{eqnarray}

The \emph{class obstruction set} of a class property $\Cbbb$ is the set of all $\subseteq$-minimal minor-closed 
graph classes that do not belong in $\Cbbb$ and we denote it with ${\sf cobs}(\Cbbb)$.
The fact that ${\sf cobs}(\Cbbb)$ exists follows from Robertson \& Seymour's Graph Minors Theorem \cite{RobertsonS04GMXX}.
However, it is not known whether ${\sf cobs}(\Cbbb)$ is finite in general.

The  \emph{(second order) obstruction set} of $\Cbbb$ is defined as $${\sf OBS}(\Cbbb)=\{\obs(\Gcal)\mid \Gcal\in {\sf cobs}(\Cbbb)\}.$$
The above definitions directly imply that for every class property $\Cbbb$
and every minor anti-chain $\Zcal$, \ 
\begin{eqnarray}
\excl(\Zcal)\in\Cbbb \iff \forall {\Ycal\in{\sf OBS}(\Cbbb)},\Ycal\nleq_{*}\Zcal.\label{pposg99fdhye}
\end{eqnarray}
See \cite{paul2023graph} for more details on the above definitions and their properties.
Notice that every element of ${\sf OBS}(\Cbbb)$ is some finite minor anti-chain. On the other hand, as mentioned above, there is no result implying that  ${\sf OBS}(\Cbbb)$ (or, equivalently,  ${\sf cobs}(\Cbbb)$) is a finite set. However, when this is the case, \eqref{pposg99fdhye} gives a finite description of the class property 
$\Cbbb$ in terms of second order obstructions.
 
Whether or not the set of all finite anti-chains is well-quasi-ordered with respect to $\leq _{*}$
is known as the  \emph{$\omega^{2}$-Well-Quasi-Ordering Conjecture on minors}   
and can be seen as a special case of the conjecture that graphs are ``Better-Quasi-Ordered'' by  the minor relation (see \cite{Jancar99ANote,Thomas1989wellquasi,Requinot2017TowardsBetter,Marcone01Fine,paul2023graph} for more on these conjectures).

\subsection{Parametric graphs}

A \emph{parametric graph} is a {sequence}  $\mathscr{G}=\lin{\mathscr{G}_{k}}_{k\in\Nbbb_{\ge1}}$ of graphs and is \emph{minor-monotone} if, for every $i\leq j$, it holds that $\mathscr{G}_{i}$ is a minor of $\mathscr{G}_{j}$.
Moving forward, all parametric graphs considered are assumed to be minor-monotone. 
Given a parametric graph $\mathscr{P}=\lin{\mathscr{P}_{k}}_{k\in\Nbbb_{\ge1}}$ we denote by $\mathscr{P}    \!\downarrow$ the set of all minors of the graphs in $\mathscr{P}$. For more on parametric graphs and their relation with class properties see \cite{paul2023graph,paul2023universal}.

Let $\mathscr{G}=\langle \mathscr{G}_{k}\rangle_{k\in\Nbbb_{\ge1}}$ be some parametric graph. 
We define its \emph{apex extension} as 
the parametric graph  $\mathscr{G}^{\mathsf{a}}=\langle \mathscr{G}_{k}^{\mathsf{a}}\rangle_{k\in\Nbbb_{\ge1}}$
where $\mathscr{G}_{k}^{\mathsf{a}}$
is the graph obtained from $\mathscr{G}_{k}$ after adding a new vertex and making it adjacent to all vertices in $\mathscr{G}_{k}$.

Given a graph $G$ we denote by $k\cdot G$ the disjoint union of $k$ copies of $G$.
Given a graph $H$ we define $\mathscr{P}^{H}=\langle \mathscr{P}^{H}_{k}\rangle_{k\in \Nbbb_{\ge1}}$ where $\mathscr{P}^{H}_{k}=k\cdot H$.
We refer to $\mathscr{P}^{H}_{k}$ as the  \emph{$k$-packing} of $H$. Also given a graph $H$, we define the graph class 

\begin{equation}
\label{eq_nwriss}
    \Gcal^H = \mathscr{P}^H \!\downarrow.
\end{equation}

We use $K_{k}$ for the complete graph on $k$ vertices, also called the \emph{$k$-clique}.
We refer to $K_{3}$ as the \emph{triangle}.
We denote by $K_{q,r}$ the complete bipartite graph whose parts have $
q$ and $r$ vertices, respectively, and by $P_{k}$ the path on $k$ vertices.
Notice that $K_{1,k}=(\mathscr{P}^{K_{1}}_{k})^{\mathsf{a}}$.

\paragraph{Daisies, chains, and diamonds.}
We next define three parametric graphs that will be used in our results; see~\cref{fig_para_graphs} for examples.
We  define $\mathscr{D}_{k}\coloneqq (\mathscr{P}_{k}^{K_{2}})^{\mathsf{a}}$
and we refer to it as a \emph{$k$-daisy}.
We define $\mathscr{C}_{k}$ as the unique 
graph where all blocks are triangles, the maximum degree is 4, and no block has more than two cut-vertices. We use the term \emph{$k$-chain} for  $\mathscr{C}_{k}$.
We define $\mathscr{M}_{k}\coloneqq ((\mathscr{P}_{k}^{K_{1}})^{\mathsf{a}})^{\mathsf{a}}$
and call it a \emph{$k$-diamond}. Notice that 
$\mathscr{M}_{k}$ contains $K_{2,k}$ as a spanning subgraph
and that $\mathscr{M}_{k}$ is also contained in $K_{2,k+1}$
as a minor.

\begin{figure}[!h]
\centering
    \begin{tikzpicture}[scale=0.8, every node/.style={scale=0.8}]
    \begin{scope}[rotate=45]
    \draw[thick] (0,0) -- (0.5,1) -- (-0.5,1) -- cycle (0,0) -- (0.5,-1) -- (-0.5,-1) -- cycle (0,0) -- (-1,0.5) -- (-1,-0.5) -- cycle (0,0) -- (1,0.5) -- (1,-0.5) -- cycle;
    \foreach \i/\j in {0/0, 0.5/1, -0.5/1, 0.5/-1, -0.5/-1, -1/0.5, -1/-0.5, 1/0.5, 1/-0.5}
    \draw[thick] (\i,\j) node[uStyle] {};
    \end{scope}
    \begin{scope}[xshift=3cm, yshift=-0.5cm]
    \draw[thick] (0.5,1) -- (0,0) -- (1,0) -- cycle (1.5,1) -- (1,0) -- (2,0) -- cycle (2.5,1) -- (2,0) -- (3,0) -- cycle (3.5,1) -- (3,0) -- (4,0) --cycle;   
    \foreach \i/\j in {0/0, 0.5/1, 1/0, 1.5/1, 2/0, 2.5/1, 3/0, 3.5/1, 4/0}
    \draw[thick] (\i,\j) node[uStyle] {};
    \draw[white] (0,-1) node {}; %invisible for alignment
    \end{scope}
    \begin{scope}[xshift=10cm, yshift=-0.5cm]
    \draw[thick] (0,0) -- (1,0) -- (1,1) -- cycle (0,0) -- (0,1) -- (1,0) (0,0) -- (-1,1) -- (1,0) (0,0) -- (2,1) -- (1,0);
    \foreach \i/\j in {0/0, 1/0, 1/1, 0/1, -1/1, 2/1}
    \draw[thick] (\i,\j) node[uStyle] {};
    \draw[white] (0,-1) node {}; %invisible for alignment
    \end{scope}
    \end{tikzpicture}
    \caption{The $k$-daisy (left), the $k$-chain (center), and the $k$-diamond (right), when $k=4$.}
    \label{fig_para_graphs}
\end{figure}

\noindent The following is a direct consequence of the main result of Qiao and  Zhan in \cite{QiaoZhan2021Relation}.

\begin{proposition}
\label{hko_take}
If a tree $T$ has $\leq l$ leaves and diameter $\leq d$, then 
$|V(T)|\leq l\Big\lfloor\frac{d}{2}\Big\rfloor+1$.    
\end{proposition}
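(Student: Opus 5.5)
The plan is to fix a central vertex and cover the tree by root-to-leaf paths whose lengths are controlled by the diameter. We may assume $|V(T)|\geq 2$, since otherwise the bound is trivial. Let $r=\lfloor d/2\rfloor$.

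First, choose a center $c$ of $T$, that is, a vertex minimizing the eccentricity. It is a standard fact that the eccentricity of a center of a tree equals $\lceil \mathrm{diam}(T)/2\rceil$. This holds because a tree has either one center or two adjacent centers, both lying on the middle of a longest path. Consequently every vertex is at distance at most $\lceil d/2\rceil$ from $c$.

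Next, root $T$ at $c$. Every vertex $v\neq c$ lies on the unique path from $c$ to some leaf of $T$ distinct from $c$: walk away from $c$ through $v$ until reaching a leaf. Hence $V(T)\setminus\{c\}$ is covered by the at most $l$ paths from $c$ to leaves. Each such path has at most $\mathrm{ecc}(c)$ vertices other than $c$, which gives $|V(T)|\leq l\lceil d/2\rceil+1$.

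This bound is correct when $d$ is even, where $\lceil d/2\rceil=\lfloor d/2\rfloor$. The main obstacle is the odd case. There I will instead use the bicentral edge $c_1c_2$, where $\mathrm{ecc}(c_i)=(d+1)/2$ and every vertex is within distance $(d-1)/2=r$ of $\{c_1,c_2\}$. Delete the edge $c_1c_2$ to obtain two subtrees $T_1\ni c_1$ and $T_2\ni c_2$. Each $T_i$ must contain a leaf of $T$, since a longest path crosses the edge $c_1c_2$. Moreover, every vertex of $T_i$ lies at depth at most $r$ from $c_i$.

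Let $l_i$ be the number of leaves of $T$ lying in $T_i$, so $l_1+l_2\leq l$ and $l_i\geq 1$. Apply the covering argument in $T_i$ rooted at $c_i$: every vertex other than $c_i$ lies on a path from $c_i$ to a leaf of $T$ in $T_i$ (note that $c_i$ itself is not a leaf of $T$ when $T_i$ is nontrivial). This yields $|V(T_i)|\leq l_i r+1$. Summing gives $|V(T)|\leq lr+2$, which is one vertex too many.

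To save that vertex, I use the fact that some vertex is counted with slack. If $r=0$, then $d=1$ and $T=K_2$, and $2\leq l\cdot 0+1$ fails. This shows the statement needs $d\geq 2$ or a convention, so I handle $K_2$ separately (the paper presumably applies the result only in nondegenerate cases). For $r\geq 1$, consider the path from $c_2$ toward its deepest leaf. It has at most $r$ vertices besides $c_2$, and I count $c_2$ itself as part of that path, giving $r+1$ vertices. The tight count in $T_2$ is then really $l_2r+1$, including $c_2$ together with the extra vertex.

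Alternatively, I would simply invoke Qiao–Zhan directly. Their result states that $n\leq l\lfloor d/2\rfloor+1$ for trees with $d\geq 2$, up to the stated floor. So I would cite it as the paper does and verify only the even case by hand as above.
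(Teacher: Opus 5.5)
\textbf{The odd case cannot be closed, because the statement is false there.} Your even-$d$ argument is correct: a center $c$ has eccentricity at most $d/2$, and $V(T)\setminus\{c\}$ is covered by the at most $l$ paths from $c$ to leaves. Your odd-$d$ argument through the central edge $c_1c_2$ is also correct, but it proves only $|V(T)|\le l\lfloor d/2\rfloor+2$. The proposal breaks at the step where you try to ``save that vertex'': counting $c_2$ as part of the deepest path just restates $|V(T_2)|\le l_2r+1$ and gains nothing.

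No argument can gain that vertex, because $lr+2$ is attained for every odd $d=2r+1$. Take adjacent vertices $c_1,c_2$ and attach to each $c_i$ some $l_i\ge 1$ pendant paths with $r\ge 1$ edges. This tree has $l=l_1+l_2$ leaves, diameter $2r+1$, and $lr+2$ vertices. The smallest case is $P_4$: two leaves, diameter $3$, four vertices, whereas $2\lfloor 3/2\rfloor+1=3$. So the statement as printed fails for every odd $d$, not only for $K_2$. Neither your restriction to $d\ge 2$ nor an appeal to Qiao and Zhan can repair it. The paper's only justification is that citation, and it cannot support the odd case.

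\textbf{What you can prove instead.} Your two arguments together give $|V(T)|\le l\lfloor d/2\rfloor+1$ for even $d$ and $|V(T)|\le l\lfloor d/2\rfloor+2$ for odd $d$. If the actual diameter $D$ is smaller than $d$, apply whichever argument matches the parity of $D$. When $d$ is even and $D$ is odd, $l\lfloor D/2\rfloor+2\le l(d/2-1)+2\le l\,d/2+1$, since $l\ge 1$. You should state and prove this corrected form rather than look for the missing vertex.

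The correction is harmless for the rest of the paper. In Lemma~\ref{tree_blokd_b} the diameter bound $2k-4$ is even, so the original bound applies. In Lemmas~\ref{msain_riop} and~\ref{l664gdjd} only the conclusion ``at most $k^2$'' is used. That still follows from $k\lfloor (k-2)/2\rfloor+2\le k^2$ and $(k-1)\lfloor (k-2)/2\rfloor+2\le k^2$, respectively, for $k\ge 2$.
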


Given a graph $G$ and a vertex $v\in V(G)$
with two neighbors $x$ and $y$, the \emph{dissolution} of 
$v$ in $G$ is the graph obtained from $G-v$ after adding the edge $xy$ (if this edge does not already exist).

\begin{lemma} 
\label{msain_riop}
Let $G$ be a graph and $S \subseteq V(G)$. If, for some $k\in\Nbbb_{\geq 1}$, $G$ does not have $P_{k}$ and $K_{1,k}$ as an $S$-minor, then $|S|\le {k^2}$.
\end{lemma}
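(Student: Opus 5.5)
We may assume $G$ is connected: if two components both contain vertices of $S$, then $K_{1,k}$ or $P_k$ as $S$-minors are only harder to find, and we can handle components separately. In fact it is cleaner to add edges between components and argue once; but adding edges could create new $S$-minors. So we instead work within a component $C$ of $G$ meeting $S$, bound $|S\cap V(C)|$, and later deal with the number of components. Since $K_1\cup K_1$-type structure across components does not give paths or stars, we instead fix the setting as follows. Take a minimal connected subgraph, i.e.\ a tree $T$ in $C$ containing all of $S\cap V(C)$ whose leaves all lie in $S$ (prune non-$S$ leaves repeatedly). Then every $S$-minor of $T$ is an $S$-minor of $G$.

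\textbf{Bounding the leaves.} If $T$ has at least $k$ leaves, we show $K_{1,k}$ is an $S$-minor of $T$. Pick any vertex $r$ of $T$ as a center bag and contract: take the leaves $\ell_1,\dots,\ell_k$ as singleton bags, and let the center bag be $V(T)\setminus\{\ell_1,\dots,\ell_k\}$ intersected with the subtree spanned. More carefully, the center bag is the minimal subtree of $T$ spanning the remaining vertices. This bag is connected, contains an $S$-vertex whenever $T$ has more than $k$ leaves or some internal $S$-vertex, and is adjacent to each $\ell_i$. When $T$ has exactly $k$ leaves and no internal $S$-vertex, we use $k-1$ leaves as petals and put the last leaf into the center bag. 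This yields a $K_{1,k-1}$; a small index shift handles it, since $|S|$ then equals the number of leaves and is at most $k\le k^2$. So $T$ has fewer than $k$ leaves.

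\textbf{Bounding the $S$-length of paths.} We contract $T$ onto $S$. Dissolve every degree-2 vertex not in $S$, and contract each edge with an endpoint of degree $\ge 3$ not in $S$ into an $S$-neighbour. This yields a tree $T'$ on a vertex set naturally identified with a subset of $S$ containing all of $S$: every non-$S$ vertex either gets dissolved or merged, so bags are $S$-rooted. $T'$ is an $S$-minor of $G$ and has at most $k-1$ leaves. If $T'$ has diameter $\ge k-1$, a longest path gives $P_k$ as an $S$-minor. Hence $\mathrm{diam}(T')\le k-2$, and \cref{hko_take} gives $|S\cap V(C)|=|V(T')|\le (k-1)\lfloor (k-2)/2\rfloor+1$.

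\textbf{Combining components.} If $c$ components meet $S$, they would give $c\cdot K_1$, which is not forbidden. So the argument must fail unless we read the statement for connected $G$. The main obstacle is therefore this multi-component case, together with the merging step above: we must ensure a non-$S$ branch vertex can always be absorbed into an adjacent $S$-bag without breaking connectivity of the other bags. We handle the merging by rooting $T$ at an $S$-vertex and assigning each non-$S$ vertex to the bag of its nearest $S$-ancestor; this gives connected bags and tree structure automatically.

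For the components, we expect the paper either implicitly assumes connectivity or uses $P_k$ and $K_{1,k}$ in a way where the per-component bound $\le k^2/2$ suffices. We would first check the lemma's later use and, if needed, add the hypothesis that $G$ is connected; the per-component bound is then comfortably at most $k^2$.
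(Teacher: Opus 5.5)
Your argument is correct for connected $G$ and follows essentially the paper's route: prune a spanning tree until all its leaves lie in $S$, bound its leaves and diameter using the forbidden $S$-minors $K_{1,k}$ and $P_k$, and apply \cref{hko_take}. Your nearest-$S$-ancestor contraction goes slightly further than the paper by producing a tree whose vertex set is exactly $S$, which makes the ``path on $k$ vertices gives $P_k$ as an $S$-minor'' step immediate; in the paper non-$S$ branch vertices survive, so that step tacitly needs the path's bags extended along side branches. You are also right that connectivity is needed: an edgeless $G$ with $S=V(G)$ and $k\ge 2$ is a counterexample. The paper's proof silently assumes connectivity by taking a spanning tree, which is harmless because \cref{cosxl} only invokes the lemma for a connected graph.
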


\begin{proof}
Let $T$ be a spanning tree of $G$. From $T$ we construct $T_{S}$ by repeatedly removing leaves that do not belong in $S$ until every leaf is in $S$,  and then by dissolving every degree-2 vertex not in $S$. These operations never delete vertices of $S$, and as they consist of vertex deletions and edge contractions, so $T_S$  is a minor of $T$. Thus, any $S$-minor model of $P_k$ or $K_{1,k}$ in $T_S$ would give one in $T$ and therefore in $G$, so $T_S$ also has no such $S$-minors. Notice that $T_S$ has at most  $k$ leaves, otherwise $T_{S}$ would contain $K_{1,k}$ as  an $S$-minor. 
Moreover, $T_{S}$ cannot contain a path on $k$ vertices, otherwise it contains $P_k$ as an $S$-minor. Therefore, $T_S$ has diameter at most $k-2$ and at most $k$ leaves. By \cref{hko_take}, we have that $|S| \leq  1 + k\Big\lfloor\frac{k-2}{2}\Big\rfloor\leq  k^2$
\end{proof}

\subsection{Erdős-Pósa properties for minors}

For our proofs, we need the following result on the Erdős-Pósa property for minors proved by Robertson and Seymour in \cite{RobertsonS86GMV}.
{Actually we provide a variant that is adequate for our purposes.}
\begin{proposition}

\label{erpos}
There is a function $f:\Nbbb^{2}\to\Nbbb$ such that 
if $\Zcal$ is a set of graphs containing at least one planar graph  and $h=n(\Zcal)$, then   every  graph $G$, that does not 
contain $k$ disjoint copies of a graph in $\Zcal$ as minors,  contains a vertex set $S\subseteq V(G)$  such that none of the graphs in $\Zcal$  is a minor of $G-S$ and $|S|\leq f(h,k)$.
\end{proposition}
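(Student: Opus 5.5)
The plan is the classical Robertson–Seymour argument: bound the treewidth first, then induct on $k$ along a tree decomposition. Fix a planar graph $H\in\Zcal$; note $|V(H)|\le h$.

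\emph{Step 1 (bounded treewidth).} Every planar graph on at most $h$ vertices is a minor of the $c h\times c h$ grid for some absolute constant $c$. Hence the $(ckh)\times(ckh)$ grid contains $k$ vertex-disjoint copies of $H$ as minors. By the Grid Theorem of Robertson and Seymour, there is a function $g$ such that every graph of treewidth at least $g(k,h)$ contains this grid as a minor. So if $G$ has no $k$ disjoint copies of graphs in $\Zcal$ as minors, then $\tw(G)<w:=g(k,h)$. Fix a rooted tree decomposition $(T,\{B_t\})$ of $G$ of width less than $w$. For a node $t$, let $G_t$ be the subgraph induced by the union of the bags in the subtree rooted at $t$.

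\emph{Step 2 (induction on $k$).} For $k=1$ take $S=\emptyset$. For $k\ge 2$, suppose $G$ contains some $Z\in\Zcal$ as a minor; otherwise again $S=\emptyset$ works.
\begin{itemize}
\item Choose a deepest node $t$ such that $G_t$ contains a minor in $\Zcal$.
\item Put $G'=G-V(G_t)$. Any family of $k-1$ disjoint $\Zcal$-minors in $G'$, together with the one in $G_t$, would give $k$ disjoint copies. So $G'$ contains no $k-1$ disjoint copies.
\item By induction there is $S'\subseteq V(G')$ with $|S'|\le f(h,k-1)$ such that $G'-S'$ has no minor in $\Zcal$.
\item Set $S=S'\cup B_t$, so $|S|\le f(h,k-1)+w$, and define $f$ recursively this way.
\end{itemize}
Since $B_t$ separates $V(G_t)\setminus B_t$ from the rest, $G-S$ is the disjoint union of $G'-S'$ and of the graphs $G_{t'}-B_t$ over the children $t'$ of $t$. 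Each $G_{t'}$ is $\Zcal$-minor-free by the choice of $t$.

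\emph{Main obstacle: disconnected members of $\Zcal$.} If every graph in $\Zcal$ is connected, any minor model of $Z$ in $G-S$ lies in one component, hence inside $G'-S'$ or inside a single $G_{t'}$. Both are impossible, and the proof is complete. For disconnected $Z$, a model could be split across several of these pieces, and this is the step I expect to need genuine work.

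I would first try to strengthen the induction statement. Let $\Zcal^{c}$ be the finite set of all graphs that are disjoint unions of components of members of $\Zcal$; its total size is bounded in terms of $h$. The strengthened claim is: for every $G$ of treewidth less than $w$, one can delete $f(h,k)$ vertices so that no member of $\Zcal$ has its components realized in distinct pieces. I would prove this by choosing $t$ deepest with respect to containing a minor in $\Zcal^{c}$ rather than in $\Zcal$, and charging the deleted bag to the at most $h$ components involved. The resulting bound grows like $f(h,k)\le h\,k\,(w+1)$ up to the recursion, which suffices, since only the existence of some $f$ is claimed.

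If this bookkeeping becomes messy, the fallback is to invoke the Robertson–Seymour result of \cite{RobertsonS86GMV} directly. That result is stated for an arbitrary planar graph (connected or not), and we apply it to a planar $H\in\Zcal$ while treating the remaining members of $\Zcal$ through the treewidth bound of Step 1.
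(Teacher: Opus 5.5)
Your argument is correct when every member of $\Zcal$ is connected. In that case it is the classical Robertson--Seymour route: the grid theorem bounds the treewidth, and you then delete the bag of a deepest node $t$ whose $G_t$ contains a member of $\Zcal$. The paper gives no proof; it only cites Robertson and Seymour (Graph Minors V). In every application in the paper ($\Zcal=\Acal$, $\Acal\cup\Ecal_\delta$, $\{K_3\}\cup\Tcal_{x+2}$) all members are connected, so your argument covers what the paper uses.

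The statement, however, allows disconnected members, and there you have a genuine gap that neither remedy closes.
\begin{itemize}
\item Choosing $t$ deepest with respect to $\Zcal^{c}$ breaks the induction on $k$. $\Zcal^{c}$ contains single components, possibly $K_1$, so $G_t$ can be tiny, and removing $B_t$ need not lower the maximum number of disjoint $\Zcal$-models. Nothing then bounds the number of deleted bags, and ``charging to the at most $h$ components'' does not supply such a bound.
\item The fallback fails as well. Even if the cited theorem covers a single disconnected planar $H$, a hitting set for $H$-models says nothing about the other members of $\Zcal$, which may be non-planar and disconnected. Bounded treewidth alone is exactly where you got stuck.
\end{itemize}

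The missing idea is to reduce to a single component.
\begin{itemize}
\item Fix $Z\in\Zcal$ and use your decomposition, with bags of size at most $w$. For a connected $X\subseteq G$, the nodes whose bags meet $X$ form a subtree $T_X$; let $\mathrm{top}(X)$ be its node closest to the root.
\item Suppose every component type of $Z$ has more than $khw$ pairwise disjoint models. Each type has a quota of $k$ times its multiplicity in $Z$.
\item Greedy step: among surviving models of types whose quota is not yet met, pick one, $X$, with $\mathrm{top}(X)$ deepest. Then discard every surviving model meeting $B_{\mathrm{top}(X)}$.
\item Every surviving model $Y$ of an unfinished type with $T_Y\cap T_X\neq\emptyset$ contains $\mathrm{top}(X)$, because $\mathrm{top}(Y)$ is not deeper than $\mathrm{top}(X)$. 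Hence the picked models are pairwise disjoint.
\item Each step discards at most $w$ models of each type, and there are at most $kh$ steps. So no type runs out, and you obtain $k$ disjoint $Z$-models, a contradiction.
\item Hence some component $F_Z$ of $Z$ has at most $khw$ disjoint models. Apply your deepest-bag argument to the connected family $\{F_Z\}$; it needs only treewidth $<w$, not planarity of $F_Z$. This gives at most $khw\cdot w$ vertices meeting every $F_Z$-model, and hence every $Z$-model.
\item The union of these sets over the at most $h$ members of $\Zcal$ is the required $S$.
\end{itemize}
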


In fact, \cref{erpos}, when $\Zcal=\{K_{3}\}$ is the classic result of  Erdős and Pósa in \cite{ErdosPosaOriginal} where $f(3,k)=O(k\log k)$. In general, it was proved in \cite{CamesVBat2019TightEP} that 
 $f(h,k)=f(h)\cdot k\cdot \log k$ for some function $f:\Nbbb\to\Nbbb$.
In our proofs, these optimizations on the function $f$  will not play any significant role as   we are dealing with the limiting behavior of graph classes.

\section{Class obstructions}
\label{xclsoosocnbsutuidison}

Given some non-negative rational number $\delta\geq 0$, we define 
\begin{eqnarray*}
\Cbbb_{\leq \delta} & = &\{\Gcal\mid  \text{$\Gcal$ is a minor-closed graph class where~}\delta(\Gcal)\le \delta\}
\end{eqnarray*}

Clearly, $\{\obs(\Gcal)\mid \Gcal\in\Cbbb_{\leq \delta}\}$    is the the set of the ${\sf yes}$-instances of \textsc{limiting Density $\leq \delta$}. 
Notice that if $\Gcal^{1}\subseteq \Gcal^{2}$, then $\delta(\Gcal^{1})\leq \delta(\Gcal^{2})$, therefore both  $\Cbbb_{\le\delta}$ and $\Cbbb_{<\delta}$ are subset-closed class properties.

From \cref{obs_nulnul}, it directly 
follows that $\cobs(\Cbbb_{<0})=\{\langle k\cdot K_{1}\rangle_{k\in\Nbbb}\downarrow\}.$

This section is devoted to  
the complete identification of ${\sf cobs}(\Cbbb_{<1})$ (\cref{subsec_atmost1}), of ${\sf cobs}(\Cbbb_{<\nicefrac{3}{2}})$ (\cref{subsec_3_2}), and of ${\sf cobs}(\Cbbb_{\leq \delta}),$ for every $\delta\in[0,1)$ (\cref{subsec_less1}) and
for every $\delta\in[1,\nicefrac{3}{2})$ (\cref{subsec_1_to_3_2}).

\subsection{Class obstructions for $\Cbbb_{<1}$}
\label{subsec_atmost1}

  In this {sub}section we identify the class obstructions of $\Cbbb_{<1}$.
We define 
$\mathscr{S}=\langle K_{1,k}\rangle_{k\in\Nbbb}$ and $\mathscr{P}=\langle P_{k}\rangle_{k\in\Nbbb}$. Let $\Gcal^{1}=\mathscr{P}^{K_{3}}    \!\downarrow$, $\Gcal^{2}=\mathscr{S}  \!\downarrow$, and  $\Gcal^{3}=\mathscr{P}  \!\downarrow$. 

\begin{lemma}
\label{jvdkjhjkdfkd}
If $\Gcal\in\{\Gcal^{1}, \Gcal^{2}, \Gcal^{3}\}$ then
$\delta(\Gcal)=1$.
\end{lemma}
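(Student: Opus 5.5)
We prove the lemma by showing, for each of the three classes, an upper bound $\ex(\Gcal,n)\le n$ together with a lower bound $\ex(\Gcal,n)\ge n-c$ for a constant $c$. Dividing by $n$ and letting $n\to\infty$ then gives $\delta(\Gcal)=1$. Before starting, we note that each class is minor-closed by definition, being the set of all minors of a parametric graph. Each class is also infinite, since it contains $\mathscr{P}^{K_3}_k$, $K_{1,k}$, and $P_k$ respectively for every $k$.

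\emph{Lower bounds.} For $\Gcal^{2}$, the star $K_{1,n-1}$ has $n$ vertices and $n-1$ edges. For $\Gcal^{3}$, the path $P_n$ has $n$ vertices and $n-1$ edges. For $\Gcal^{1}$, write $n=3k+r$ with $r\in\{0,1,2\}$ and take $k\cdot K_3$ together with $r$ isolated vertices. This graph is a subgraph, hence a minor, of $\mathscr{P}^{K_3}_{k+1}$, and it has $n$ vertices and $3k\ge n-2$ edges. So in every case $\ex(\Gcal,n)\ge n-2$, and therefore $\liminf_n \ex(\Gcal,n)/n\ge 1$.

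\emph{Upper bounds.} Every graph in $\Gcal^{2}$ or $\Gcal^{3}$ is a minor of a tree, so it is a forest. A forest on $n$ vertices has at most $n-1$ edges.

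For $\Gcal^{1}$ we use the following characterization: a minor of $k\cdot K_3$ is a disjoint union of connected components, each of which is a minor of $K_3$. Concretely, each component is $K_3$, $K_2$, or $K_1$. To justify this, take any minor model in $k\cdot K_3$. Each branch set is connected, so it lies inside a single triangle. Adjacent branch sets must also lie in the same triangle. Hence each component of the minor is modeled inside one triangle, and is therefore a minor of $K_3$.

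Every component that is a minor of $K_3$ has at most as many edges as vertices. Summing over components, every $n$-vertex graph in $\Gcal^{1}$ has at most $n$ edges. Thus $\ex(\Gcal,n)\le n$ in all three cases, and $\limsup_n \ex(\Gcal,n)/n\le 1$.

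Combining the two bounds, $\lim_{n\to\infty}\ex(\Gcal,n)/n$ exists and equals $1$ for each $\Gcal\in\{\Gcal^{1},\Gcal^{2},\Gcal^{3}\}$. The only step needing any care is the characterization of minors of $k\cdot K_3$, and it follows directly from the connectivity of branch sets in a minor model.
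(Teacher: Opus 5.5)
Your proof is correct and essentially matches the paper's: both sandwich $\ex(\Gcal,n)$ between $n-\Ocal(1)$ and $n$ (the paper actually computes it exactly), using that minors of stars and paths are forests for $\Gcal^{2},\Gcal^{3}$, and that every component of a graph in $\Gcal^{1}$ has at most $3$ vertices. One harmless slip: your list of connected minors of $K_3$ should also include the path $P_3$, which still has fewer edges than vertices, so your bound $\ex(\Gcal^{1},n)\le n$ is unaffected.
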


 \begin{proof}
 If $\Gcal=\Gcal^{1}$, then let $n\in\Nbbb$ and  consider a graph $G\in\Gcal_{n}$.
Let $n_1,\ldots,n_{r}$ be the sizes of the connected components of $G$ and observe that $1\leq n_{i}\leq 3$, $i\in[r]$. Let $k=\lfloor n/3\rfloor$ and observe that $|E(G)|\leq \sum_{i\in[r]}\binom{n_i}{2}\leq k\binom{3}{2}+b_{n},$ where $b_n$ is 1 or 0 depending on whether $n   \!\mod 3 =2$ or not. 
This upper bound is sharp as witnessed by the disjoint union of $k\cdot K_{3}$ and the graph formed from $n   \!\mod 3$ pairwise adjacent vertices. Therefore, ${\sf ex}(\Gcal,n)=\lfloor n/3\rfloor\cdot \binom{3}{2}+b_{n}$ and $$\delta(\Gcal)=\lim_{n\to\infty}\frac{{\sf ex}(\Gcal,n)}{n}=\lim_{n\to\infty}\frac{\lfloor n/3\rfloor\cdot \binom{3}{2}+b_{n}}{n}=1.$$

If $\Gcal=\Gcal^{2}$ (resp. $\Gcal= \Gcal^{3}$), then for every $k\in\Nbbb$, it holds that  $K_{1,k}\in \Gcal$ (resp. $P_{k}\in \Gcal$). 
As every graph $G\in \Gcal_n$ is a subgraph of $K_{1,n-1}$ (resp. $P_{n}$), we have that ${\sf ex}(\Gcal,n)\leq n-1$.
Clearly, this upper bound is sharp as witnessed by $K_{1,n-1}$ (resp. $P_{n}$). Therefore, ${\sf ex}(\Gcal,n)=n-1$ and $\delta(\Gcal)=\lim_{n\to\infty}\frac{{\sf ex}(\Gcal,n)}{n}=\lim_{n\to\infty}\frac{n-1}{n}=1$. 
\end{proof}

\begin{lemma}
\label{l664gdjd}
If $\Gcal$ is a graph class that does not contain $\Gcal^{1}$
as a subset and excludes the graphs $K_{1,k}$ and $P_{k}$ as minors, then $\delta(\Gcal)\le 1-1/k^2$. 
\end{lemma}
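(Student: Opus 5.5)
The plan is a direct edge count on an arbitrary $G\in\Gcal_n$, component by component. There are two ingredients. Every connected component of $G$ has boundedly many vertices, by \cref{msain_riop}. Only boundedly many components can contain a cycle, because $\Gcal$ does not contain $\Gcal^{1}$.

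\emph{Step 1 (bounded components).} Let $C$ be a connected component of $G$. Since $\Gcal$ is minor-closed and excludes $P_k$ and $K_{1,k}$, the graph $C$ has neither as a minor. In particular, taking $S=V(C)$, it has neither as an $S$-minor (every minor model in $C$ is trivially a $V(C)$-minor model). Applying \cref{msain_riop} to $C$ with $S=V(C)$ gives $|V(C)|\le k^2$.

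\emph{Step 2 (few cyclic components).} Since $\Gcal\not\supseteq\Gcal^{1}=\mathscr{P}^{K_3}\!\downarrow$ and $\Gcal$ is minor-closed, there is some $t\in\Nbbb_{\geq 1}$ with $t\cdot K_3\notin\Gcal$. Indeed, otherwise every $k\cdot K_3$ lies in $\Gcal$, and hence so do all their minors. Every cycle contracts to a triangle, so $t$ components of $G$ each containing a cycle would yield $t\cdot K_3$ as a minor of $G$. Hence at most $t-1$ components of $G$ contain cycles. This step does not need \cref{erpos}; the component structure already suffices.

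\emph{Step 3 (counting).} An acyclic component $C$ with $c=|V(C)|\le k^2$ vertices has $c-1=c(1-1/c)\le c(1-1/k^2)$ edges. Each of the at most $t-1$ cyclic components has at most $\binom{k^2}{2}\le k^4$ edges. Summing over all components gives
\[
|E(G)|\le n\Big(1-\frac{1}{k^2}\Big)+(t-1)k^4 .
\]
Thus ${\sf ex}(\Gcal,n)/n\le 1-1/k^2+(t-1)k^4/n$, and letting $n\to\infty$ gives $\delta(\Gcal)\le 1-1/k^2$. If $\Gcal$ is finite, its density is $-\infty$ and there is nothing to prove.

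The only step needing care is Step 2: converting ``$\Gcal$ does not contain $\Gcal^1$'' into a concrete excluded packing $t\cdot K_3$, and noting that cycles in distinct components give disjoint triangle minors. Everything else is routine.
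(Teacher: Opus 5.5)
Your proof is correct and follows essentially the same route as the paper: every component has bounded size, there are fewer than $t$ cyclic components because some $t\cdot K_3\notin\Gcal$, and the estimate $c-1\le c(1-1/k^2)$ handles the tree components. The differences are cosmetic. You get the uniform bound $|V(C)|\le k^2$ for every component, cyclic ones included, from \cref{msain_riop} with $S=V(C)$, whereas the paper uses a cruder $k^k$ bound for cyclic components (via degree and diameter) and applies \cref{hko_take} directly to trees; your per-component edge count is also slightly cleaner than the paper's ratio manipulation.
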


\begin{proof}
As $\Gcal$ is a graph class that does not contain $\Gcal^{1}$, there is 
some $c$ such that $c\cdot K_{3}$ is not a minor of $\Gcal$.
This means that if $G\in\Gcal$,
then less than $c$ of the connected components of $G$ are non-trees.
Let $C$ be a connected component of $G$ that is not a tree.
As $C$ excludes $K_{1,k}$ and $P_{k}$, it has bounded degree $<k$ and diameter $<k-1$.
Therefore, $|V(C)|\leq k^k$. Let $C_G$ be the union of all connected components 
of $G$ that are non-trees. Observe that $|V(C_G)|<ck^k$.
Similarly, if $H$ is a connected component of $G$ that is a tree, then its diameter is less than $k-1$. Moreover, since $K_{1,k}$
is excluded as a minor, $H$ has less than $k$ leaves. By \cref{hko_take}, we have $|V(H)|\le 1+(k-1)\lfloor\frac{k-2}{2}\rfloor\le k^2$.

Now let $G\in \Gcal_{n}$ and define $z:=|V(C_G)|$. Let $r$ be the number of 
connected components of $G$ that are trees, and let  $n_{1},\ldots,n_{r}$ be their sizes.
Recall that $n_{i}\leq k^2$, for every $i\in[r]$. Therefore, $\sum_{i\in[r]}n_i\leq rk^2$. Furthermore, if we let $p:=c\cdot {k^k\choose 2}$, then 
$\ex(\Gcal,n)\leq p+\sum_{i\in[r]}(n_i-1)=p+(\sum_{i\in[r]}n_i)-r$. Moreover, 
$n\geq \sum_{i\in[r]}n_i\geq n-ck^k$. This implies that  
\begin{eqnarray*}
\frac{\ex(\Gcal,n)}{n} & \leq  & \frac{p+(\sum_{i\in[r]}n_i)-r}{\sum_{i\in[r]}n_i}\\
& \leq  & \frac{p}{\sum_{i\in[r]}n_i}+1-\frac{r}{\sum_{i\in[r]}n_i}\\
& \leq  & \frac{p}{n-ck^k}+1-\frac{r}{\sum_{i\in[r]}n_i}\\
& \leq  & \frac{p}{n-ck^k}+1-\frac{r}{rk^2}\\
& =& \frac{p}{n-ck^k}+1-\frac{1}{k^2}
\end{eqnarray*}
We conclude  that $\delta(\Gcal)=\lim_{n\to\infty}\frac{\ex(\Gcal,n)}{n}\leq (\lim_{n\to\infty}\frac{p}{n-ck^k})+1-\frac{1}{k^2}=1-\frac{1}{k^2}$.
\end{proof}

\begin{theorem}
\label{adsfssbfbscf}
$\cobs(\Cbbb_{<1})=\{\Gcal^{1}, \Gcal^{2}, \Gcal^{3}\}$.
\end{theorem}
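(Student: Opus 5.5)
To prove $\cobs(\Cbbb_{<1})=\{\Gcal^{1},\Gcal^{2},\Gcal^{3}\}$ we need two things. First, each $\Gcal^{i}$ lies outside $\Cbbb_{<1}$ and is $\subseteq$-minimal with this property. Second, every minor-closed class $\Gcal\notin\Cbbb_{<1}$ contains some $\Gcal^{i}$.

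For the second part, let $\Gcal$ be minor-closed with $\delta(\Gcal)\geq 1$. Suppose $\Gcal$ contains none of $\Gcal^1,\Gcal^2,\Gcal^3$. Since $\Gcal^2=\mathscr{S}\!\downarrow$ and $\Gcal$ is minor-closed, $\Gcal^2\not\subseteq\Gcal$ means some $K_{1,k_1}\notin\Gcal$. Likewise some $P_{k_2}\notin\Gcal$. Take $k=\max\{k_1,k_2\}$. By minor-monotonicity of the stars and paths, $\Gcal$ then excludes both $K_{1,k}$ and $P_k$ as minors. \cref{l664gdjd} now gives $\delta(\Gcal)\leq 1-1/k^2<1$, a contradiction. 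So $\Gcal$ contains some $\Gcal^i$. It remains to check that the three classes are indeed the minimal ones, i.e.\ that none is contained in another and each is minimal.

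\cref{jvdkjhjkdfkd} shows $\delta(\Gcal^i)=1$, so $\Gcal^i\notin\Cbbb_{<1}$. For minimality, let $\Gcal'\subsetneq\Gcal^i$ be minor-closed; we show $\delta(\Gcal')<1$. This is the main step, and we apply \cref{l664gdjd} to $\Gcal'$ in each case.

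\textbf{Case $\Gcal^1$.} Since $\Gcal'$ is a proper minor-closed subclass of $\mathscr{P}^{K_3}\!\downarrow$, some $c\cdot K_3\notin\Gcal'$ (every graph of $\Gcal^1$ is a minor of some $c\cdot K_3$). Hence $\Gcal^1\not\subseteq\Gcal'$. Also every graph in $\Gcal^1$ has components on at most $3$ vertices, so $K_{1,3},P_4\notin\Gcal'$. \cref{l664gdjd} with $k=4$ gives $\delta(\Gcal')<1$.

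\textbf{Case $\Gcal^2$.} Some $K_{1,k}\notin\Gcal'$. Moreover $P_5$ is not a minor of any star, since stars have diameter $2$ and minors of trees are forests with no longer paths. So $P_5\notin\Gcal'$. Since $\Gcal^2$ is acyclic, $K_3\notin\Gcal'$, and thus $\Gcal^1\not\subseteq\Gcal'$. \cref{l664gdjd} with $k'=\max\{k,5\}$ applies.

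\textbf{Case $\Gcal^3$.} Symmetrically, some $P_k\notin\Gcal'$, $K_{1,3}$ is not a minor of a path, and $K_3\notin\Gcal'$. Again \cref{l664gdjd} applies.

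The pairwise incomparability of the three classes follows from these same observations. $\Gcal^1$ misses $K_{1,3}$ and $P_4$, while $\Gcal^2$ and $\Gcal^3$ both contain them. Conversely $\Gcal^1$ contains $K_3$, which the two acyclic classes miss. Finally, $\Gcal^2$ misses $P_5$, which is in $\Gcal^3$, and $\Gcal^3$ misses $K_{1,3}$, which is in $\Gcal^2$.

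The only delicate point is the routine verification that a proper minor-closed subclass of a class of the form $\mathscr{X}\!\downarrow$ must omit some member $\mathscr{X}_k$. This holds because every graph of $\mathscr{X}\!\downarrow$ is a minor of some $\mathscr{X}_k$, and the sequence is minor-monotone.
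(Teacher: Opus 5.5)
Your proof is correct and uses the same two ingredients as the paper: Lemma~\ref{jvdkjhjkdfkd} gives $\delta(\Gcal^i)=1$, and Lemma~\ref{l664gdjd} handles classes containing none of the $\Gcal^i$. The paper instead deduces minimality of each $\Gcal^i$ from the $\subseteq$-antichain property plus this dichotomy, while you check it directly on proper subclasses, which makes your incomparability paragraph unnecessary. That paragraph also has a small slip: $P_4\notin\Gcal^2$ (it contains $2\cdot K_2$) and $K_{1,3}\notin\Gcal^3$, so ``$\Gcal^2$ and $\Gcal^3$ both contain them'' is false as written, though what is needed, $K_{1,3}\in\Gcal^2$ and $P_4\in\Gcal^3$, is true.
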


\begin{proof}
Notice first that none of the classes $\Gcal^{1}, \Gcal^{2}, \Gcal^{3}$ is a subset of the other. 
Therefore, they constitute a $\subseteq$-anti-chain.
By \cref{jvdkjhjkdfkd}, if a class $\Gcal$ contains some $\Gcal^i\in\{
\Gcal^{1},\Gcal^{2},\Gcal^{3}\}$ as a subset, then $\delta(\Gcal)\geq \delta(\Gcal^i)\geq 1$.
If, instead, $\Gcal$ does not contain any of $\Gcal^{1}$, $\Gcal^{2}$, or $\Gcal^{3}$ as subsets,
then by \cref{l664gdjd},  there is some $x$ such that $\delta(\Gcal)\leq (x-1)/x<1$.
\end{proof}

Clearly, $\Cbbb_{<1}\subseteq \Cbbb_{\leq 1}$. However, because of \cref{jvdkjhjkdfkd}, 
$\Cbbb_{\leq 1}$ contains classes that are not members of $\Cbbb_{<1}$ such as 
the class obstructions of $\Cbbb_{<1}$, i.e., $\Gcal^{1}, \Gcal^{2}, \Gcal^{3}$.

\subsection{Class obstructions for $\Cbbb_{<\nicefrac{3}{2}}$}

\label{subsec_3_2}

Our next step is to identify the class obstructions of $\Cbbb_{<\nicefrac{3}{2}}$. The proofs follow the same general approach as in the $\Cbbb_{<1}$ case, but they are more intricate and necessitate the introduction of additional concepts.

\paragraph{$k$-trees.}
Let $k\in \Nbbb_{\geq 1}$.
A vertex $v$ in a graph $G$ is \emph{$k$-simplicial} if $G[N_{G}(v)]$ is a $k$-clique. 
A \emph{$k$-tree}  is recursively defined as follows: $K_{k+1}$ is a $k$-tree
and every graph $G$ on $>k+1$ vertices containing a $k$-simplicial vertex $v$ such that 
$G-v$ is a $k$-tree is also a $k$-tree.
 
A graph $G$ is \emph{chordal} if it does not contain a cycle on $\geq 4$ vertices as an induced subgraph.
A graph is \emph{$k$-chordal}
if it is  a subgraph of a $k$-tree and it is chordal. Notice that $k$-trees are the $k$-chordal graphs that are  $k$-connected.\medskip

\begin{figure}[!h]
    \centering
    \begin{tikzpicture}[scale=0.8, every node/.style={scale=0.8}]
    \begin{scope}
    \draw[thick] (0,0) -- (1,0) -- (0.5,1) -- cycle; 
    \foreach \i/\j in {0/0, 1/0, 0.5/1}
    \draw[thick] (\i,\j) node[uStyle] {};
    \end{scope}
     \begin{scope}[xshift=3cm]
     \draw[thick] (0,0) -- (1,0) -- (0.5,1) -- cycle (0,0) -- (0.5,-1) -- (1,0);
     \foreach \i/\j in {0/0, 1/0, 0.5/1, 0.5/-1}
    \draw[thick] (\i,\j) node[uStyle] {};
     \end{scope}   
     \begin{scope}[xshift=6cm]
     \draw[thick] (0,0) -- (1,0) -- (0.5,1) -- cycle (0,0) -- (0.5,-1) -- (1,0) (0.5,1) -- (1.5,1) -- (1,0); 
     \foreach \i/\j in {0/0, 1/0, 0.5/1, 0.5/-1, 1.5/1}
    \draw[thick] (\i,\j) node[uStyle] {};
     \end{scope} 
     \begin{scope}[xshift=8cm]
     \draw[thick] (0,0) -- (1,0) (0,0) -- (0,1) -- (1,0) (0,0) -- (1,1) -- (1,0) (0,0) -- (0.5,-1) -- (1,0); 
     \foreach \i/\j in {0/0, 1/0, 0/1, 0.5/-1, 1/1}
    \draw[thick] (\i,\j) node[uStyle] {};
     \end{scope} 
    \end{tikzpicture}
    \caption{The 2-trees on 3 (left), 4 (center), and 5 (right) vertices.}
    \label{2-trees}
\end{figure}

Consider the sets of graphs in~\cref{fig_chor_2}. We denote by $\Acal_6$ all the $2$-trees  on 6 vertices. 
We denote by $\Acal_8$ the class of connected graphs with  exactly two blocks one isomorphic to a $5$-vertex $2$-tree and the other to the unique $4$-vertex $2$-tree (see~\cref{2-trees}). We denote by $\Acal_{10}$ all the connected graphs with three blocks, each isomorphic to the unique $4$-vertex $2$-tree. Lastly, we let $\Acal_4 = \{K_4\}$. Let $$\Acal =\bigcup\limits_{j\in\{4,6,8,10\}}\Acal_j$$ and let $$\Abbb _j = \{\mathscr{P}^{A}\downarrow \; |\; A \in \mathcal{A}_j\}\mbox{ where $j\in\{4,6,8,10\}$}$$ We set $\Abbb  = \bigcup\limits_{j\in\{4,6,8,10\}}\Abbb _j$. Notice that $\Abbb$ contains 30 graph classes.
We also define $\Dcal=\langle \mathscr{D}_{k}\rangle_{k\in \Nbbb}\!\downarrow$,
$\Ccal=\langle \mathscr{C}_{k}\rangle_{k\in \Nbbb}\!\downarrow$, $\Mcal=\langle \mathscr{M}_{k}\rangle_{k\in \Nbbb}\!\downarrow$.

\begin{figure}[h]
\centering
\scalebox{1.17}{\tikzstyle{uStyle}=[shape = circle, minimum size = 2pt, inner sep =1.3pt, outer sep = 0pt, draw, fill=white]
\begin{tikzpicture}[scale = 0.4]
\begin{scope}[local bounding box=groupD, xshift=-15.75cm, yshift=-11.7cm]
  \node[uStyle] (s) at (5,3.7) {};
  \node[uStyle] (d) at ($(s)+ (90:1)$) {};
  \node[uStyle] (c) at ($(s)+ (0:1)$) {};
  \node[uStyle] (a) at ($(c)+ (90:1)$) {};
  \node (ggg) at (5.5, 2.7) {\footnotesize$\mathcal{A}_ {4}$};
  \draw (a)--(c)--(d)--(s)--(a)--(d) (s)--(c);
\end{scope}
\begin{scope}
   \draw
     ($(groupD.south west)+(-7pt,-7pt)$) rectangle
     ($(groupD.north east)+(7pt,7pt)$);
\end{scope}
\begin{scope}[local bounding box=groupA, xshift=-7.75cm, yshift=-7cm]
  \node[uStyle] (a) at (0,0) {};
  \node[uStyle] (b) at ($(a) + (60:1)$) {};
  \node[uStyle] (c) at ($(a) + (0:1)$) {};
  \node[uStyle] (d) at ($(a) + (120:1)$) {};
  \node[uStyle] (e) at ($(c) + (60:1)$) {};
  \node[uStyle] (f) at ($(a) + (-60:1)$) {};
  \draw (d) -- (a) -- (b) -- (c) -- (e);
  \draw (d) -- (c) -- (f) -- (a) -- (c);
  \draw (e) -- (a);
  
  \node[uStyle] (g) at (2,0) {};
  \node[uStyle] (h) at ($(g) + (60:1)$) {};
  \node[uStyle] (i) at ($(g) + (0:1)$) {};
  \node[uStyle] (j) at ($(i) + (60:1)$) {};
  \node[uStyle] (k) at ($(i) + (0:1)$) {};
  \node[uStyle] (l) at ($(k) + (-120:1)$) {};
  \draw (g) -- (h) -- (i) -- (j) -- (k) -- (l) -- (i) -- (k);
  \draw (h) -- (j);
  \draw (i) -- (g);
  
  \node[uStyle] (s) at (5,0) {};
  \node[uStyle] (t) at ($(s) + (60:1)$) {};
  \node[uStyle] (u) at ($(s) + (0:1)$) {};
  \node[uStyle] (v) at ($(s) + (120:1)$) {};
  \node[uStyle] (w) at ($(u) + (60:1)$) {};
  \node[uStyle] (x) at ($(s) + (-60:1)$) {};
  \draw (v) -- (t) --  (w) -- (u) -- (x) -- (s) -- (v);
  \draw (s) -- (t) -- (u) -- (s);
  
  \node[uStyle] (m) at (7,0) {};
  \node[uStyle] (n) at ($(m) + (60:1)$) {};
  \node[uStyle] (o) at ($(m) + (0:1)$) {};
  \node[uStyle] (p) at ($(o) + (60:1)$) {};
  \node[uStyle] (q) at ($(o) + (0:1)$) {};
  \node[uStyle] (r) at ($(q) + (60:1)$) {};
  \draw (m) -- (n) -- (p) -- (r) -- (q) -- (p) -- (o) -- (n);
  \draw (m) -- (o) -- (q);

  \node[uStyle] (y) at (10,0) {};
  \node[uStyle] (z) at ($(y) + (60:1)$) {};
  \node[uStyle] (A) at ($(y) + (0:1)$) {};
  \node[uStyle] (B) at ($(A) + (60:1)$) {};
  \node[uStyle] (C) at ($(A) + (0:1)$) {};
  \node[uStyle] (D) at ($(C) + (-120:1)$) {};
  \draw (y) -- (z) -- (A) -- (B) -- (C) -- (A);
  \draw (z) -- (B);
  \draw (A) -- (y);
  \draw (A) -- (D) -- (B);
  \node (E) at (5.5,-2) {\footnotesize$\mathcal{A}_6$};
\end{scope}
\begin{scope}
   \draw
     ($(groupA.south west)+(-7pt,-7pt)$) rectangle
     ($(groupA.north east)+(7pt,7pt)$);
\end{scope}
\begin{scope}[local bounding box=groupB, xshift=8.25cm, yshift=0cm]
\node[uStyle] (F) at (-2.69,-5) {};
\node[uStyle] (G) at ($(F) + (60:1)$) {};
\node[uStyle] (H) at ($(F) + (0:1)$) {};
\node[uStyle] (I) at ($(F) + (-60:1)$) {};
\node[uStyle] (K) at ($(H) + (60:1)$) {};
\node[uStyle] (L) at ($(H) + (0:1)$) {};  
\node[uStyle] (M) at ($(L) + (60:1)$) {};
\node[uStyle] (N) at ($(L) + (0:1)$) {};
\draw (F) -- (G) -- (H) -- (I) -- (F) -- (H);
\draw (H) -- (K) -- (L) -- (M) -- (N) -- (L) -- (H);
\draw (K) -- (M);
\node[uStyle] (O) at (1.31,-5) {};
\node[uStyle] (P) at ($(O) + (60:1)$) {};
\node[uStyle] (Q) at ($(O) + (0:1)$) {};
\node[uStyle] (R) at ($(O) + (-60:1)$) {};
\node[uStyle] (S) at ($(Q) + (90:1)$) {};
\node[uStyle] (T) at ($(Q) + (-90:1)$) {};  
\node[uStyle] (U) at ($(Q) + (30:1)$) {};
\node[uStyle] (V) at ($(Q) + (-30:1)$) {};
\draw (O) -- (P) -- (Q) -- (R) -- (O) -- (Q);
\draw (Q) -- (R) (Q) -- (S)  (Q) -- (T) (Q) -- (U) (Q) -- (V);
\draw (S) -- (U) -- (V) -- (T);
\node[uStyle] (W) at (4.17,-5) {};
\node[uStyle] (X) at ($(W) + (60:1)$) {};
\node[uStyle] (Y) at ($(W) + (0:1)$) {};
\node[uStyle] (Z) at ($(W) + (-60:1)$) {};
\node[uStyle] (aa) at ($(Y) + (-60:1)$) {};
\node[uStyle] (bb) at ($(Y) + (0:1)$) {};  
\node[uStyle] (cc) at ($(aa) + (0:1)$) {};
\node[uStyle] (dd) at ($(Y) + (60:1)$) {};
\draw (W) -- (X) -- (Y) -- (Z) -- (W) -- (Y);
\draw (bb) -- (Y) (bb) -- (aa) (bb) -- (cc) (bb) -- (dd);
\draw (dd) -- (Y) -- (aa) -- (cc);
\node[uStyle] (ee) at (7.17,-5) {};
\node[uStyle] (ff) at ($(ee) + (30:1)$) {};
\node[uStyle] (gg) at ($(ee) + (-30:1)$) {};
\node[uStyle] (hh) at ($(gg) + (30:1)$) {};
\node[uStyle] (ii) at ($(hh) + (90:1)$) {};
\node[uStyle] (jj) at ($(hh) + (-90:1)$) {};  
\node[uStyle] (kk) at ($(hh) + (30:1)$) {};
\node[uStyle] (ll) at ($(hh) + (-30:1)$) {};
\draw (ff) -- (hh) -- (gg) -- (ee) -- (ff) -- (gg);
\draw (hh) -- (ii) (hh) -- (jj)  (hh) -- (kk) (hh) -- (ll);
\draw (ii) -- (kk) -- (ll) -- (jj);
\node[uStyle] (mm) at (10.75,-5) {};
\node[uStyle] (nn) at ($(mm) + (30:1)$) {};
\node[uStyle] (oo) at ($(mm) + (-30:1)$) {};
\node[uStyle] (pp) at ($(oo) + (30:1)$) {};
\node[uStyle] (qq) at ($(pp) + (90:1)$) {};  
\node[uStyle] (ss) at ($(pp) + (30:1)$) {};
\node[uStyle] (tt) at ($(pp) + (-30:1)$) {};
\node[uStyle] (rr) at ($(tt) + (30:1)$) {};
\draw (nn) -- (pp) -- (oo) -- (mm) -- (nn) -- (oo);
\draw (ss) -- (qq) (ss) -- (pp)  (ss) -- (tt) (ss) -- (rr);
\draw (qq) -- (pp) -- (tt) -- (rr);
\node[uStyle] (ss) at (-2.79,-7.8) {};
\node[uStyle] (tt) at ($(ss) + (30:1)$) {};
\node[uStyle] (uu) at ($(ss) + (-30:1)$) {};
\node[uStyle] (vv) at ($(uu) + (30:1)$) {};
\node[uStyle] (ww) at ($(vv) + (60:1)$) {};
\node[uStyle] (xx) at ($(vv) + (0:1)$) {};  
\node[uStyle] (yy) at ($(xx) + (60:1)$) {};
\node[uStyle] (zz) at ($(xx) + (0:1)$) {};
\draw (tt) -- (vv) -- (uu) -- (ss) -- (tt) -- (uu);
\draw (xx) -- (yy) (xx) -- (zz)  (xx) -- (ww) (xx) -- (vv);
\draw (vv) -- (ww) -- (yy) -- (zz);
\node[uStyle] (AA) at (1.93,-8.1) {};
  \node[uStyle] (BB) at ($(AA) + (60:1)$) {};
  \node[uStyle] (CC) at ($(AA) + (0:1)$) {};
  \node[uStyle] (DD) at ($(AA) + (120:1)$) {};
  \node[uStyle] (EE) at ($(CC) + (60:1)$) {};
  \node[uStyle] (FF) at ($(EE) + (60:1)$) {};
  \node[uStyle] (GG) at ($(EE) + (-60:1)$) {};
  \node[uStyle] (HH) at ($(EE) + (0:1)$) {};
  \draw (DD) -- (AA) -- (BB) -- (CC) -- (EE);
  \draw (DD) -- (CC) -- (AA);
  \draw (EE) -- (AA);
  \draw (EE) -- (FF) -- (HH) -- (GG) -- (EE) -- (HH);
  \node[uStyle] (II) at (5.43,-7.7) {};
  \node[uStyle] (JJ) at ($(II) + (60:1)$) {};
  \node[uStyle] (KK) at ($(II) + (0:1)$) {};
  \node[uStyle] (LL) at ($(II) + (120:1)$) {};
  \node[uStyle] (MM) at ($(KK) + (60:1)$) {};
  \node[uStyle] (OO) at ($(MM) + (-60:1)$) {};
  \node[uStyle] (PP) at ($(MM) + (0:1)$) {};
  \node[uStyle] (NN) at ($(PP) + (-60:1)$) {};
  \draw (LL) -- (II) -- (JJ) -- (KK) -- (MM);
  \draw (LL) -- (KK) -- (II);
  \draw (MM) -- (II);
  \draw (OO) -- (MM) -- (PP) -- (NN) -- (OO) -- (PP);
  \node[uStyle] (XX) at (8.93,-7.3) {};
\node[uStyle] (QQ) at ($(XX) + (60:1)$) {};
\node[uStyle] (RR) at ($(XX) + (0:1)$) {};
\node[uStyle] (SS) at ($(XX) + (-60:1)$) {};
\node[uStyle] (TT) at ($(RR) + (60:1)$) {};
\node[uStyle] (UU) at ($(RR) + (0:1)$) {};  
\node[uStyle] (VV) at ($(RR) + (-60:1)$) {};
\node[uStyle] (WW) at ($(VV) + (0:1)$) {};
\draw (XX) -- (QQ) -- (RR) -- (SS) -- (XX) -- (RR);
\draw (TT) -- (RR) -- (UU) -- (VV) -- (WW);
\draw (TT) -- (VV) (WW) -- (RR) -- (VV);  
\node[uStyle] (YY) at (11.9,-7.3) {};
  \node[uStyle] (ZZ) at ($(YY) + (60:1)$) {};
  \node[uStyle] (aaa) at ($(YY) + (0:1)$) {};
  \node[uStyle] (bbb) at ($(YY) + (120:1)$) {};
  \node[uStyle] (ccc) at ($(aaa) + (60:1)$) {};
  \draw (bbb) -- (YY) -- (ZZ) -- (aaa) -- (ccc);
  \draw (bbb) -- (aaa) -- (YY);
  \draw (ccc) -- (YY);
  \node[uStyle] (ddd) at ($(aaa) + (-60:1)$) {};
  \node[uStyle] (eee) at ($(aaa) + (0:1)$) {};
  \node[uStyle] (fff) at ($(ddd) + (0:1)$) {};
  \draw (eee) -- (aaa) -- (ddd) -- (fff) -- (eee) -- (ddd); 
  \node (ggg) at (5.5, -9) {\footnotesize $\mathcal{A}_8$};
  \end{scope}
  \begin{scope}
      \draw
     ($(groupB.south west)+(-7pt,-7pt)$) rectangle
     ($(groupB.north east)+(7pt,7pt)$);
  \end{scope}
  \begin{scope}[local bounding box = groupC, xshift=1.5cm, yshift=0.15cm]
      \node[uStyle] (hhh) at (-10.5,-11.8) {};
      \node[uStyle] (iii) at ($(hhh) + (60:1)$) {};
      \node[uStyle] (jjj) at ($(hhh) + (0:1)$) {};
      \node[uStyle] (kkk) at ($(hhh) + (-60:1)$) {};
      \node[uStyle] (lll) at ($(jjj) + (60:1)$) {};
      \node[uStyle] (mmm) at ($(jjj) + (-60:1)$) {};  
      \node[uStyle] (nnn) at ($(jjj) + (0:1)$) {};
      \node[uStyle] (ooo) at ($(nnn) + (60:1)$) {};
      \node[uStyle] (ppp) at ($(nnn) + (-60:1)$) {};
      \node[uStyle] (qqq) at ($(nnn) + (0:1)$) {};
      \draw (hhh) -- (iii) -- (jjj) -- (kkk) -- (hhh) -- (jjj);
      \draw (jjj) -- (lll) --  (nnn) -- (mmm) -- (jjj) -- (nnn);
      \draw (nnn) -- (ooo) -- (qqq) -- (ppp) -- (nnn) -- (qqq);
      \node[uStyle] (rrr) at (-6.56,-11.8) {};
      \node[uStyle] (sss) at ($(rrr) + (60:1)$) {};
      \node[uStyle] (ttt) at ($(rrr) + (0:1)$) {};
      \node[uStyle] (uuu) at ($(rrr) + (-60:1)$) {};
      \node[uStyle] (vvv) at ($(ttt) + (60:1)$) {};
      \node[uStyle] (yyy) at ($(ttt) + (0:1)$) {};
      \node[uStyle] (xxx) at ($(ttt) + (-60:1)$) {};  
      \node[uStyle] (zzz) at ($(yyy) + (30:1)$) {};
      \node[uStyle] (AAA) at ($(yyy) + (-30:1)$) {};
      \node[uStyle] (BBB) at ($(zzz) + (-30:1)$) {};
      \draw (rrr) -- (sss) -- (ttt) -- (uuu) -- (rrr) -- (ttt);
      \draw (ttt) -- (vvv) --  (yyy) -- (xxx) -- (ttt) -- (yyy);
      \draw (zzz) -- (BBB) -- (AAA) -- (yyy) -- (zzz) -- (AAA);
      \node[uStyle] (CCC) at (-1.94,-11.8) {};
      \node[uStyle] (DDD) at ($(CCC) + (30:1)$) {};
      \node[uStyle] (EEE) at ($(CCC) + (-30:1)$) {};
      \node[uStyle] (FFF) at ($(DDD) + (-30:1)$) {};
      \node[uStyle] (GGG) at ($(FFF) + (60:1)$) {};
      \node[uStyle] (HHH) at ($(FFF) + (0:1)$) {};
      \node[uStyle] (III) at ($(FFF) + (-60:1)$) {};  
      \node[uStyle] (JJJ) at ($(HHH) + (30:1)$) {};
      \node[uStyle] (KKK) at ($(HHH) + (-30:1)$) {};
      \node[uStyle] (LLL) at ($(JJJ) + (-30:1)$) {};
      \draw (DDD) -- (FFF) -- (EEE) -- (CCC) -- (DDD) -- (EEE);
      \draw (FFF) -- (GGG) --  (HHH) -- (III) -- (FFF) -- (HHH);
      \draw (JJJ) -- (LLL) -- (KKK) -- (HHH) -- (JJJ) -- (KKK);
      \node[uStyle] (MMM) at (3.48,-11.8) {};
      \node[uStyle] (NNN) at ($(MMM) + (30:1)$) {};
      \node[uStyle] (OOO) at ($(MMM) + (-30:1)$) {};
      \node[uStyle] (PPP) at ($(NNN) + (-30:1)$) {};
      \node[uStyle] (QQQ) at ($(PPP) + (30:1)$) {};
      \node[uStyle] (RRR) at ($(PPP) + (-30:1)$) {};
      \node[uStyle] (SSS) at ($(QQQ) + (-30:1)$) {};  
      \node[uStyle] (TTT) at ($(SSS) + (30:1)$) {};
      \node[uStyle] (UUU) at ($(SSS) + (-30:1)$) {};
      \node[uStyle] (VVV) at ($(TTT) + (-30:1)$) {};
      \draw (NNN) -- (PPP) -- (OOO) -- (MMM) -- (NNN) -- (OOO);
      \draw (QQQ) -- (SSS) --  (RRR) -- (PPP) -- (QQQ) -- (RRR);
      \draw (TTT) -- (VVV) -- (UUU) -- (SSS) -- (TTT) -- (UUU);
      \node[uStyle] (XXX) at (9.52,-11.8) {};
      \node[uStyle] (YYY) at ($(XXX) + (30:1)$) {};
      \node[uStyle] (WWW) at ($(XXX) + (-30:1)$) {};
      \node[uStyle] (ZZZ) at ($(YYY) + (-30:1)$) {};
      \node[uStyle] (aaaa) at ($(ZZZ) + (30:1)$) {};
      \node[uStyle] (bbbb) at ($(ZZZ) + (-30:1)$) {};
      \node[uStyle] (cccc) at ($(aaaa) + (-30:1)$) {};  
      \node[uStyle] (dddd) at ($(cccc) + (60:1)$) {};
      \node[uStyle] (eeee) at ($(cccc) + (0:1)$) {};
      \node[uStyle] (ffff) at ($(cccc) + (-60:1)$) {};
      \draw (YYY) -- (ZZZ) -- (WWW) -- (XXX) -- (YYY) -- (WWW);
      \draw (aaaa) -- (cccc) --  (bbbb) -- (ZZZ) -- (aaaa) -- (bbbb);
      \draw (cccc) -- (dddd) -- (eeee) -- (ffff) -- (cccc) -- (eeee);
      \node[uStyle] (hhh) at (15.02,-11.8) {};
      \node[uStyle] (iii) at ($(hhh) + (60:1)$) {};
      \node[uStyle] (jjj) at ($(hhh) + (0:1)$) {};
      \node[uStyle] (kkk) at ($(hhh) + (-60:1)$) {};
      \node[uStyle] (lll) at ($(jjj) + (30:1)$) {};
      \node[uStyle] (mmm) at ($(jjj) + (-30:1)$) {};  
      \node[uStyle] (nnn) at ($(lll) + (-30:1)$) {};
      \node[uStyle] (ooo) at ($(nnn) + (60:1)$) {};
      \node[uStyle] (ppp) at ($(nnn) + (-60:1)$) {};
      \node[uStyle] (qqq) at ($(nnn) + (0:1)$) {};
      \draw (hhh) -- (iii) -- (jjj) -- (kkk) -- (hhh) -- (jjj);
      \draw (lll) --  (nnn) -- (mmm) -- (jjj) -- (lll) -- (mmm);
      \draw (nnn) -- (ooo) -- (qqq) -- (ppp) -- (nnn) -- (qqq);
      \node[uStyle] (rrr) at (-10.56,-14.3) {};
      \node[uStyle] (sss) at ($(rrr) + (60:1)$) {};
      \node[uStyle] (ttt) at ($(rrr) + (0:1)$) {};
      \node[uStyle] (uuu) at ($(rrr) + (-60:1)$) {};
      \node[uStyle] (vvv) at ($(ttt) + (60:1)$) {};
      \node[uStyle] (yyy) at ($(ttt) + (0:1)$) {};
      \node[uStyle] (xxx) at ($(ttt) + (-60:1)$) {};  
      \node[uStyle] (zzz) at ($(xxx) + (-60:1)$) {};
      \node[uStyle] (AAA) at ($(xxx) + (-120:1)$) {};
      \node[uStyle] (BBB) at ($(AAA) + (-60:1)$) {};
      \draw (rrr) -- (sss) -- (ttt) -- (uuu) -- (rrr) -- (ttt);
      \draw (ttt) -- (vvv) --  (yyy) -- (xxx) -- (ttt) -- (yyy);
      \draw (zzz) -- (BBB) -- (AAA) -- (xxx) -- (zzz) -- (AAA);
      \node[uStyle] (rrr) at (-7.62,-14.3) {};
      \node[uStyle] (sss) at ($(rrr) + (60:1)$) {};
      \node[uStyle] (ttt) at ($(rrr) + (0:1)$) {};
      \node[uStyle] (uuu) at ($(rrr) + (-60:1)$) {};
      \node[uStyle] (vvv) at ($(ttt) + (60:1)$) {};
      \node[uStyle] (yyy) at ($(ttt) + (0:1)$) {};
      \node[uStyle] (xxx) at ($(ttt) + (-60:1)$) {};  
      \node[uStyle] (zzz) at ($(xxx) + (-30:1)$) {};
      \node[uStyle] (AAA) at ($(xxx) + (-150:1)$) {};
      \node[uStyle] (BBB) at ($(xxx) + (-90:1)$) {};
      \draw (rrr) -- (sss) -- (ttt) -- (uuu) -- (rrr) -- (ttt);
      \draw (ttt) -- (vvv) --  (yyy) -- (xxx) -- (ttt) -- (yyy);
      \draw (xxx) -- (zzz) -- (BBB) -- (AAA) -- (xxx) -- (BBB);
      \node[uStyle] (rrr) at (-4.68,-14.3) {};
      \node[uStyle] (sss) at ($(rrr) + (60:1)$) {};
      \node[uStyle] (ttt) at ($(rrr) + (0:1)$) {};
      \node[uStyle] (uuu) at ($(rrr) + (-60:1)$) {};
      \node[uStyle] (vvv) at ($(ttt) + (30:1)$) {};
      \node[uStyle] (yyy) at ($(ttt) + (-30:1)$) {};
      \node[uStyle] (xxx) at ($(yyy) + (30:1)$) {};  
      \node[uStyle] (zzz) at ($(yyy) + (-60:1)$) {};
      \node[uStyle] (AAA) at ($(yyy) + (-120:1)$) {};
      \node[uStyle] (BBB) at ($(AAA) + (-60:1)$) {};
      \draw (rrr) -- (sss) -- (ttt) -- (uuu) -- (rrr) -- (ttt);
      \draw (ttt) -- (vvv) --  (yyy) -- (xxx) (ttt) -- (yyy) (xxx) -- (vvv);
      \draw (zzz) -- (BBB) -- (AAA) -- (yyy) -- (zzz) -- (AAA);
      \node[uStyle] (rrr) at (-1.02,-14.3) {};
      \node[uStyle] (sss) at ($(rrr) + (30:1)$) {};
      \node[uStyle] (ttt) at ($(rrr) + (-30:1)$) {};
      \node[uStyle] (uuu) at ($(sss) + (-30:1)$) {};
      \node[uStyle] (vvv) at ($(uuu) + (30:1)$) {};
      \node[uStyle] (yyy) at ($(uuu) + (-30:1)$) {};
      \node[uStyle] (xxx) at ($(yyy) + (30:1)$) {};  
      \node[uStyle] (zzz) at ($(yyy) + (-60:1)$) {};
      \node[uStyle] (AAA) at ($(yyy) + (-120:1)$) {};
      \node[uStyle] (BBB) at ($(AAA) + (-60:1)$) {};
      \draw (rrr) -- (sss) -- (ttt) -- (uuu) (rrr) -- (ttt)(uuu) -- (sss);
      \draw (uuu) -- (vvv) --  (yyy) -- (xxx) (uuu) -- (yyy) (xxx) -- (vvv);
      \draw (zzz) -- (BBB) -- (AAA) -- (yyy) -- (zzz) -- (AAA);
%      \node[uStyle] (rrr) at (3.36,-14.3) {};
%      \node[uStyle] (sss) at ($(rrr) + (30:1)$) {};
%      \node[uStyle] (ttt) at ($(rrr) + (-30:1)$) {};
%      \node[uStyle] (uuu) at ($(sss) + (-30:1)$) {};
%      \node[uStyle] (vvv) at ($(uuu) + (30:1)$) {};
%      \node[uStyle] (yyy) at ($(uuu) + (-30:1)$) {SSS};
%      \node[uStyle] (xxx) at ($(yyy) + (30:1)$) {};  
%      \node[uStyle] (zzz) at ($(yyy) + (-30:1)$) {};
%      \node[uStyle] (AAA) at ($(yyy) + (-150:1)$) {};
%      \node[uStyle] (BBB) at ($(AAA) + (-30:1)$) {};
%      \draw (rrr) -- (sss) -- (ttt) -- (uuu) (rrr) -- (ttt)(uuu) -- (sss);
%      \draw (uuu) -- (vvv) --  (yyy) -- (xxx) (uuu) -- (yyy) (xxx) -- (vvv);
%      \draw (zzz) -- (BBB) -- (AAA) -- (yyy) -- (zzz)  (BBB) -- (yyy);
%      \node[uStyle] (rrr) at (7.74,-14.3) {};
%      \node[uStyle] (sss) at ($(rrr) + (30:1)$) {};
%      \node[uStyle] (ttt) at ($(rrr) + (-30:1)$) {sss};
%      \node[uStyle] (uuu) at ($(sss) + (-30:1)$) {};
%      \node[uStyle] (vvv) at ($(uuu) + (60:1)$) {};
%      \node[uStyle] (yyy) at ($(uuu) + (-60:1)$) {};
%      \node[uStyle] (xxx) at ($(uuu) + (0:1)$) {};  
%      \node[uStyle] (zzz) at ($(yyy) + (-30:1)$) {};
%      \node[uStyle] (AAA) at ($(yyy) + (-150:1)$) {};
%      \node[uStyle] (BBB) at ($(AAA) + (-30:1)$) {};
%      \draw (rrr) -- (sss) -- (ttt) -- (uuu) (rrr) -- (ttt)(uuu) -- (sss);
%      \draw (uuu) -- (vvv) (yyy) -- (xxx) (uuu) -- (yyy) (xxx) -- (vvv) (uuu) -- (xxx);
%      \draw (zzz) -- (BBB) -- (AAA) -- (yyy) -- (zzz)  (BBB) -- (yyy);
      \node[uStyle] (rrr) at (6.8,-14.3) {};
      \node[uStyle] (sss) at ($(rrr) + (30:1)$) {};
      \node[uStyle] (ttt) at ($(rrr) + (-30:1)$) {};
      \node[uStyle] (uuu) at ($(sss) + (-30:1)$) {};
      \node[uStyle] (vvv) at ($(uuu) + (30:1)$) {};
      \node[uStyle] (yyy) at ($(uuu) + (-30:1)$) {};
      \node[uStyle] (xxx) at ($(yyy) + (30:1)$) {};  
      \node[uStyle] (zzz) at ($(uuu) + (-60:1)$) {};
      \node[uStyle] (AAA) at ($(uuu) + (-120:1)$) {};
      \node[uStyle] (BBB) at ($(AAA) + (-60:1)$) {};
      \draw (rrr) -- (sss) -- (ttt) -- (uuu) (rrr) -- (ttt)(uuu) -- (sss);
      \draw (uuu) -- (vvv) --  (yyy) -- (xxx) (uuu) -- (yyy) (xxx) -- (vvv);
      \draw (zzz) -- (BBB) -- (AAA) -- (uuu) -- (zzz) -- (AAA);
      \node[uStyle] (rrr) at (3.48,-14.8) {};
      \node[uStyle] (sss) at ($(rrr) + (60:1)$) {};
      \node[uStyle] (ttt) at ($(rrr) + (0:1)$) {};
      \node[uStyle] (uuu) at ($(rrr) + (-60:1)$) {};
      \node[uStyle] (vvv) at ($(ttt) + (15:1)$) {};
      \node[uStyle] (yyy) at ($(ttt) + (75:1)$) {};
      \node[uStyle] (xxx) at ($(yyy) + (15:1)$) {};  
      \node[uStyle] (zzz) at ($(ttt) + (-15:1)$) {};
      \node[uStyle] (AAA) at ($(ttt) + (-75:1)$) {};
      \node[uStyle] (BBB) at ($(AAA) + (-15:1)$) {};
      \draw (rrr) -- (sss) -- (ttt) -- (uuu) -- (rrr) -- (ttt);
      \draw (ttt) -- (vvv) --  (yyy) -- (xxx) (ttt) -- (yyy) (xxx) -- (vvv);
      \draw (zzz) -- (BBB) -- (AAA) -- (ttt) -- (zzz) -- (AAA);
      \node[uStyle] (rrr) at (11.3,-14.35) {};
      \node[uStyle] (sss) at ($(rrr) + (30:1)$) {};
      \node[uStyle] (ttt) at ($(rrr) + (-30:1)$) {};
      \node[uStyle] (uuu) at ($(sss) + (-30:1)$) {};
      \node[uStyle] (vvv) at ($(uuu) + (30:1)$) {};
      \node[uStyle] (yyy) at ($(uuu) + (-30:1)$) {};
      \node[uStyle] (xxx) at ($(yyy) + (30:1)$) {};  
      \node[uStyle] (zzz) at ($(uuu) + (-60:1)$) {};
      \node[uStyle] (AAA) at ($(uuu) + (-120:1)$) {};
      \node[uStyle] (BBB) at ($(AAA) + (-60:1)$) {};
      \draw (rrr) -- (sss)  (ttt) -- (uuu) (rrr) -- (ttt)(uuu) -- (sss) (rrr)--(uuu);
      \draw (uuu) -- (vvv)   (yyy) -- (xxx) (uuu) -- (yyy) (xxx) -- (vvv) (uuu) -- (xxx);
      \draw (zzz) -- (BBB) -- (AAA) -- (uuu) -- (zzz) (uuu) -- (BBB);
      \node[uStyle] (rrr) at (15.9,-14.35) {};
      \node[uStyle] (sss) at ($(rrr) + (30:1)$) {};
      \node[uStyle] (ttt) at ($(rrr) + (-30:1)$) {};
      \node[uStyle] (uuu) at ($(sss) + (-30:1)$) {};
      \node[uStyle] (vvv) at ($(uuu) + (30:1)$) {};
      \node[uStyle] (yyy) at ($(uuu) + (-30:1)$) {};
      \node[uStyle] (xxx) at ($(yyy) + (30:1)$) {};  
      \node[uStyle] (zzz) at ($(uuu) + (-60:1)$) {};
      \node[uStyle] (AAA) at ($(uuu) + (-120:1)$) {};
      \node[uStyle] (BBB) at ($(AAA) + (-60:1)$) {};
      \draw (rrr) -- (sss)  (ttt) -- (uuu) (rrr) -- (ttt)(uuu) -- (sss) (rrr)--(uuu);
      \draw (uuu) -- (vvv)   (yyy) -- (xxx) (uuu) -- (yyy) (xxx) -- (vvv) (uuu) -- (xxx);
      \draw (zzz) -- (BBB) -- (AAA) -- (uuu) -- (zzz) (AAA) -- (zzz);
      \node (ggg) at (5.5, -17) {\footnotesize{$\mathcal{A}_{10}$}};
  \end{scope}
  \begin{scope}
      \draw
     ($(groupC.south west)+(-7pt,-7pt)$) rectangle
     ($(groupC.north east)+(7pt,7pt)$);
  \end{scope}
\end{tikzpicture}
}
  \caption{The sets of graphs $\Acal_4,\Acal_6,\Acal_8$, and $\Acal_{10}$.}
    \label{fig_chor_2}
\end{figure}

\begin{lemma}
\label{lem_uokdhd}
For every $\Gcal\in\Abbb\cup\{\Dcal,\Ccal\}$, it holds that 
$\delta(\Gcal)=\frac{3}{2}$. Also $\delta(\Mcal)=2$.
\end{lemma}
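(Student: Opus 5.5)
Every class in question is the minor-closure of a parametric graph, so the plan is to get a lower bound from an explicit family and an upper bound by controlling the structure of every $n$-vertex member of the class. I will do this separately for $\Abbb$, $\Dcal$, $\Ccal$ and $\Mcal$.

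\textbf{Classes in $\Abbb$.} For $\Gcal=\mathscr{P}^{A}\!\downarrow$ with $A\in\Acal_j$, note that $|V(A)|=j$ and $|E(A)|=\frac{3j}{2}$ in each case.
\begin{itemize}
\item $K_4$ has $4$ vertices and $6$ edges.
\item A $2$-tree on $6$ vertices has $2\cdot 6-3=9$ edges.
\item A member of $\Acal_8$ has $5+4-1=8$ vertices and $7+5=12$ edges.
\item A member of $\Acal_{10}$ has $10$ vertices and $15$ edges.
\end{itemize}
The lower bound comes from $\lfloor n/j\rfloor\cdot A$ plus isolated vertices, which gives ratio $\to\frac32$. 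For the upper bound, every $G\in\Gcal_n$ is a minor of some $k\cdot A$, so each connected component of $G$ is a minor of $A$. It therefore suffices to check that every connected minor $H$ of $A$ satisfies $|E(H)|\le\frac32|V(H)|$. For proper minors of a $2$-tree (or of a block graph of $2$-trees) this follows from the fact that $2$-chordal graphs have at most $2|V|-3$ edges per block. Summing over blocks, a connected graph whose blocks $B_i$ have $b_i$ vertices has at most $\sum(2b_i-3)$ edges with $|V|=1+\sum(b_i-1)$. For blocks of size $\le 5$, and using the specific block structure of the $\Acal_j$, this gives $|E|\le\frac32|V|$; it amounts to a small finite check over the listed graphs, e.g.\ $2b-3\le \frac32(b-1)$ iff $b\le 3$, with the excess of the larger blocks compensated exactly as in $A$ itself. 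Summing over components gives $|E(G)|\le\frac32 n$.

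\textbf{The class $\Dcal$.} The daisy $\mathscr{D}_k$ has $2k+1$ vertices and $3k$ edges, which gives the lower bound. For the upper bound, the point is that minors of daisies have a very restricted shape. $\mathscr{D}_k$ is a $K_1$ joined to a matching, so every connected minor is either a subgraph of a daisy or is obtained by contracting an edge of it. Contracting a spoke merges two spokes, and contracting a matching edge produces a daisy with a pendant edge. Hence each component of a graph in $\Dcal$ is a subgraph of a daisy on its own vertices, and a daisy on $m$ vertices has at most $\frac32(m-1)$ edges.

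\textbf{The class $\Ccal$.} The argument is the same with triangle cacti. $\mathscr{C}_k$ has $2k+1$ vertices and $3k$ edges. Every minor of a chain is a cactus in which all blocks are edges or triangles, and such a graph has at most $\frac32(|V|-1)$ edges per component.

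\textbf{The class $\Mcal$.} The diamond $\mathscr{M}_k$ has $k+2$ vertices and $2k+1$ edges, so $\delta(\Mcal)\ge2$. Since $\mathscr{M}_k\le K_{2,k+1}$, every graph in $\Mcal$ is a minor of some $K_{2,k+1}$ and so excludes $K_4$. Graphs excluding $K_4$ have at most $2n-3$ edges, so $\delta(\Mcal)\le 2$.

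The only genuinely laborious step is the finite verification for $\Abbb$: that no connected minor of a graph in $\Acal_j$ exceeds density $\frac32$. I would handle it by noting that a minor $H$ of a $2$-tree block structure is again $2$-chordal-contained (series-parallel). Its blocks $B_i$ are then minors of the blocks of $A$ and satisfy $|E(B_i)|\le 2|V(B_i)|-3$. This reduces the check to comparing block-size profiles against the ones in $A$.
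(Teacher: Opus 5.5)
Your proof is correct, and its skeleton is the paper's: lower bounds come from $\lfloor n/j\rfloor\cdot A$ and from $\mathscr{D}_k,\mathscr{C}_k,\mathscr{M}_k$, and upper bounds are obtained component by component. The routes differ in two places.

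\textbf{The classes in $\Abbb$.} The paper tabulates the exact extremal function $f_j(s)$ for connected members on $s$ vertices, which yields the exact value of $\ex(\Gcal,n)$. You only need $|E(H)|\le\frac32|V(H)|$ for every connected minor $H$ of $A$. Using the bound $2b_i-3$ per block, this reduces to $\sum_i(b_i-3)\le 3$. You should say explicitly why your ``block-profile check'' holds:
\begin{itemize}
\item every block of $H$ is a minor of a block of $A$, so $b_i\le 6,5,4$ for $\Acal_6,\Acal_8,\Acal_{10}$ respectively;
\item $\sum_i(b_i-1)=|V(H)|-1\le j-1$;
\item these two constraints alone force $\sum_i(b_i-3)\le 3$.
\end{itemize}
The case $K_4$ is not a $2$-tree, but it is trivial since $\binom{h}{2}\le\frac32 h$ for $h\le 4$. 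Your bound gives less information than the paper's table, but it is much easier to certify.

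\textbf{The class $\Mcal$.} The paper asserts that every $n$-vertex member of $\Mcal$ is a subgraph of $\mathscr{M}_{n-2}$. That claim implicitly relies on subgraphs of $\mathscr{M}_k$ being closed under contraction. You instead note that $\Mcal$ is $K_4$-minor-free and apply the standard $2n-3$ edge bound. This reaches the same value $2n-3$ with no structural work.

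\textbf{The daisy paragraph.} This is the one loose spot in your write-up. A minor generally needs several contractions, not one. Also, a component of even order is never a subgraph of a daisy ``on its own vertices'', since daisies have odd order. The conclusion survives: minors of daisies are subgraphs of daisies, and a connected subgraph of a daisy on $m$ vertices has at most $\frac32(m-1)$ edges. It is cleaner, though, to reuse your $\Ccal$ argument, as the paper does for both classes. Daisies have no $C_4$ minor, so every block of a minor is $K_2$ or $K_3$. Per component this gives $|E|=|V|-1+t_3\le\frac32(|V|-1)$.

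Finally, the odd-order constructions $\mathscr{D}_k$ and $\mathscr{C}_k$ give the lower bound for every $n$ once you add an isolated vertex.
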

\begin{proof}
Consider a class $\Gcal\in\mathbb A_{j}$  for some $j\in\{4,6,8,10\}$. For $0\le s\le j$,  we define
\[
f_4(s)=\binom{s}{2}\qquad\text{and}\qquad
f_6(s)=\begin{cases}0,&s \in \{0,1\},\\ 2s-3,&2\le s\le 6.\end{cases}
\] 
\[
f_8(s)=\begin{cases}0,&s \in \{0,1\},\\ 2s-3,&2\le s\le 5,\\2s-4,&6\le s\le 8\end{cases}\qquad\text{and}\qquad
f_{10 }(s)=
\begin{cases}
0, & s \in \{0,1\},\\
2s-3, & 2\le s\le 4,\\
2s-4, & 5\le s\le 7,\\
2s-5, & 8\le s\le 10
\end{cases}
\]
Observe that any edge-maximal graph in $\Gcal$ on $s$ vertices in $\Abbb_j$ has exactly $f_j(s)$ edges.
 
Also, observe that for any $j\in\{4,6,8,10\}$, $\frac{f_j(j)}{j}=\frac{3}{2}$ and   that $0\leq  f_j(j) \leq  15$. Let $G \in \Gcal_n$ for some $n \in \Nbbb$. Let $n_1,\dots,n_r$ be the sizes of the connected components of $G$ and observe that $1\le n_i \le j$, $i \in [r]$. Let $k = \lfloor n/j \rfloor$ and observe that
\[|E(G)|\le \sum_{i \in [r]} f_j(n_i) \le k\,f_j(j)+f_j(s)\] where $s = n \mod j$. Bounds are sharp, witnessed by $k$ disjoint copies of an edge maximal
$j$-vertex graph plus an edge maximal graph on the remaining $s$ vertices. Therefore, $\text{ex}(\Gcal,n) = k\,f_j(j)+f_j(s) $ and $\delta(\Gcal)
=\lim_{n\to\infty}\frac{\text{ex}(\Gcal,n)}{n}
=\lim_{n\to\infty}\frac{\,\lfloor n/j \rfloor f_j(j)+f_j(s)}{n}=\frac{f_j(j)}{j}=\frac{3}{2}
$. So if $\Gcal \in \Abbb$ we have that $\delta(\Gcal) = \frac{3}{2}$ .\\

Assume now that $\Gcal = \Dcal$ (resp. $\Gcal = \Ccal$). Then, for every $k \in \Nbbb,$ it holds that $\mathscr{D}_k \in \mathcal{D}$   (resp. $\mathscr{C}_k \in \mathcal{C}$). Let $G \in \mathcal{G}_n$ for some $n \in \Nbbb$, we claim that, in any case, 
\[
\text{ex}(\mathcal{G},n) = n-1 + \Big\lfloor \frac{n-1}{2} \Big\rfloor = 
\begin{cases}
\frac{3}{2}(n-1),& n\text{ odd},\\[2pt]
\frac{3}{2}n-2,& n\text{ even}.
\end{cases}
\]\\
In order to prove the above claim, observe that, for every nontrivial connected component $C$ of $G$, every block of $C$ is isomorphic to $K_i$ for some, $i\in\{2,3\}$. We call such a block a $K_i$-block of $G$. Let $t_3$ (resp. $t_{2}$) be the number of $K_3$-blocks (resp. $K_2$-blocks) of $G$, and let $t_1$ be the number of connected components of $G$. Now $n = 2t_3 + t_1 + t_2$ and $m = 3t_3 + t_2$ where $m$ and $n$ are the numbers of edges and vertices of $G$, respectively. So, $m = 3t_3 + t_2 = n-t_1 + t_3 \le n-1 + t_3 \le n-1 + \Big\lfloor \frac{n-1}{2} \Big\rfloor$ since $t_3 \le \Big\lfloor \frac{n-1}{2} \Big\rfloor$. Therefore, $\text{ex}(\Gcal,n) \leq  n-1 + \Big\lfloor \frac{n-1}{2} \Big\rfloor$.
For sharpness, if $n=2k+1$, take $\mathscr{D}_k$ (resp. $\mathscr{C}_k$)  which has $m=3k$. In case $n=2k$, contract an edge in $\mathscr{D}_k$ (resp. contract an edge in $\mathscr{C}_k$). This removes 2 edges so $m=3k-2$. In both cases, 
$m=n-1+\lfloor (n-1)/2\rfloor$. Therefore, $\text{ex}(\Gcal,n) \geq  n-1 + \Big\lfloor \frac{n-1}{2} \Big\rfloor$. This completes the proof of the claim.
We conclude that
\[
\delta(\Gcal)
=\lim_{n\to\infty}\frac{\text{ex}(\Gcal,n)}{n}
=\frac{3}{2}
\]
We now consider the remaining case where  $\Gcal = \Mcal$. Then, for every $k \in \Nbbb,$ it holds that $\mathscr{M}_{k} \in \Gcal$. As every graph $G \in \Gcal_n$, for $n \in \Nbbb_{\geq 2},$ is a subgraph of $\mathscr{M}_{n-2}$ we have that $\text{ex}(\Gcal,n) \leq  2n-3$. Clearly, this upper bound is sharp as witnessed by $\mathscr{M}_{n-2}$. Therefore, $\text{ex}(\Gcal,n) = 2n-3$, thus 
$$ \delta(\Gcal) =\lim_{n\to\infty }\frac{\text{ex}(\Gcal,n)}{n}=\lim_{n\to\infty}\frac{2n-3}{n}=2$$
as required.
\end{proof}

A \emph{cyclic} graph $G$ is a graph with at least one cycle. We say that an edge $xy$ of $G$ is
\emph{reducible}  if $N_{G}(x)\cap N_{G}(y)=\emptyset$.
If $G$ has no reducible edges then we call it   \emph{simplified}. We say that we \emph{simplify} a graph $G$ if we repetitively contract  reducible edges of $G$ until it  becomes simplified.
Notice that all graphs in $\Acal$ are simplified.

\begin{observation}
\label{dis_pen}
Let $G$ be a graph where $|E(G)|/|V(G)|\geq 1.$
If $G$ contains a reducible edge $e$, then $|E(G/e)|/|V(G/e)|\geq |E(G)|/|V(G)|$.  
\end{observation}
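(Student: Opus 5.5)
Write $n=|V(G)|$ and $m=|E(G)|$, so that $m/n\geq 1$. The plan is to count exactly what contracting a reducible edge $e=xy$ does to the vertex and edge counts, and then compare the two ratios directly.

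The vertex count drops by exactly one, so $|V(G/e)|=n-1$. For the edges, the contraction replaces $x$ and $y$ by a single vertex $v_{xy}$ whose neighbourhood is $(N_G(x)\cup N_G(y))\setminus\{x,y\}$. The only edges lost are therefore $e$ itself and one edge from each pair $xz,yz$ with $z\in N_G(x)\cap N_G(y)$. Since $e$ is reducible, $N_G(x)\cap N_G(y)=\emptyset$, so no such pairs exist and exactly one edge is lost. This gives $|E(G/e)|=m-1$.

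It remains to show $\frac{m-1}{n-1}\geq \frac{m}{n}$. We first need $n\geq 2$, which holds because $e$ exists. Cross-multiplying by the positive quantity $n(n-1)$, the inequality becomes $n(m-1)\geq m(n-1)$, which simplifies to $m\geq n$. This is exactly the hypothesis $m/n\geq 1$.

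There is no real obstacle here. The only point needing care is the edge count: reducibility of $e$ is precisely the condition that no parallel edges are merged, so only $e$ disappears.
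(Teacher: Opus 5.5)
Your proof is correct and is exactly the argument the paper leaves implicit (it states this as an observation without proof): contracting a reducible edge removes exactly one vertex and exactly one edge, and $\frac{m-1}{n-1}\geq\frac{m}{n}$ is equivalent to $m\geq n$. The positivity of $n-1$ is also fine, since $m\geq n$ already forces $n\geq 3$ in a simple graph.
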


We define $S_{3}$ to be the graph formed from a triangle by adding for each edge $xy$ a new vertex  $v_{xy}$ and making it adjacent to both $x$ and $y$. Observe that $S_{3}\in \Acal_6$.

\begin{lemma}
\label{thi_oplod}
If $G$ is a simplified biconnected graph that minor-excludes every graph in $\Acal_4\cup \Acal_6$, then $G$ is some $2$-tree on $3$, $4$, or $5$ vertices (see~\cref{2-trees}).
\end{lemma}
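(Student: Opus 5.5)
We argue in two stages. First we show that $G$ is a $2$-tree, using $K_4$-minor-freeness and simplification. Then we bound $|V(G)|$ by $5$, using the exclusion of $\Acal_6$.

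\textbf{Stage 1: $G$ is a $2$-tree.} Since $G$ excludes $K_4$ as a minor, $G$ is series--parallel, so it has treewidth at most $2$. Being biconnected, $G$ is either a single edge or has at least $3$ vertices and minimum degree $2$; every $K_4$-minor-free graph has a vertex of degree at most $2$.

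We show by induction on $|V(G)|$ that a simplified biconnected $K_4$-minor-free graph is a $2$-tree.
\begin{itemize}
\item If $G=K_2$, its only edge $xy$ has $N(x)\cap N(y)=\emptyset$, so it is reducible. Hence a simplified $G$ has at least $3$ vertices.
\item Let $v$ be a vertex of degree $2$ with neighbours $x,y$. The edge $vx$ is not reducible, so $v$ and $x$ have a common neighbour, which must be $y$. Hence $xy\in E(G)$ and $v$ is $2$-simplicial.
\item Consider $G-v$. It is biconnected (or equal to $K_2$ when $G=K_3$), since a degree-$2$ vertex whose neighbours are adjacent can be removed without destroying $2$-connectivity. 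It is also $K_4$-minor-free. It may, however, fail to be simplified: an edge $ab$ of $G-v$ could have had $v$ as its only common neighbour, which forces $\{a,b\}=\{x,y\}$.
\end{itemize}
So the only possible reducible edge of $G-v$ is $xy$. We therefore strengthen the induction hypothesis to: a biconnected $K_4$-minor-free graph in which every edge except possibly one designated edge lies in a triangle is a $2$-tree. Removing a degree-$2$ vertex whose neighbours are adjacent preserves this property, with the new designated edge being $xy$. One subtlety is that $G$ might have no degree-$2$ vertex other than one adjacent to the designated edge. We handle this with the standard fact that a $K_4$-minor-free graph on at least $4$ vertices has two non-adjacent vertices of degree at most $2$, and we choose the one avoiding the designated edge's endpoints where needed.

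Once the reduction reaches a triangle, we re-add the simplicial vertices in reverse order. This exhibits $G$ as a $2$-tree. Alternatively, we can cite the characterization that a graph is a $2$-tree if and only if it is $K_4$-minor-free, $2$-connected and every edge lies in a triangle. Simplified means exactly that every edge lies in a triangle, so this is probably the cleanest route.

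\textbf{Stage 2: $|V(G)|\le 5$.} Suppose $G$ is a $2$-tree on at least $6$ vertices. Take a $2$-tree elimination order and delete simplicial vertices until exactly $6$ remain. The result is a subgraph of $G$ that is a $2$-tree on $6$ vertices, that is, a member of $\Acal_6$. This contradicts the assumption that $G$ minor-excludes $\Acal_6$, so $|V(G)|\le 5$.

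Since $G$ is a $2$-tree with $|V(G)|\ge 3$, it is a $2$-tree on $3$, $4$ or $5$ vertices, as in \cref{2-trees}.

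\textbf{Main obstacle.} The main difficulty is Stage 1, specifically justifying that ``every edge in a triangle'' together with $2$-connectivity and $K_4$-minor-freeness forces a $2$-tree. The inductive bookkeeping around the one possibly reducible edge $xy$ is the delicate point. We would first try citing or briefly reproving the characterization through the ear or nested structure of series--parallel graphs. In such a graph, a cycle of length at least $4$ with no chord produces an edge of that cycle lying in no triangle. Otherwise, a chord would create a $K_4$-minor together with the $2$-connectivity of $G$.
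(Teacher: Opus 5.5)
\textbf{There is a genuine gap in Stage 1.} The characterization you rely on is false: a $2$-connected, $K_4$-minor-free graph in which every edge lies in a triangle need not be a $2$-tree. Here is a counterexample.
\begin{itemize}
\item Take the $4$-cycle $abcd$ and add four new vertices $e,f,g,h$ with $N(e)=\{a,b\}$, $N(f)=\{b,c\}$, $N(g)=\{c,d\}$, $N(h)=\{d,a\}$.
\item This graph is $2$-connected and has treewidth $2$, hence no $K_4$ minor.
\item Every edge lies in a triangle, so it is simplified.
\item It has an induced $C_4$ and only $12<2\cdot 8-3$ edges, so it is not a $2$-tree.
\end{itemize}

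Your induction breaks exactly on this graph. After deleting $e$, the edge $ab$ becomes the designated edge. Deleting any further degree-$2$ vertex (say $g$, which avoids $a$ and $b$) leaves both $ab$ and $cd$ outside every triangle. So the invariant ``all edges but at most one lie in a triangle'' is not preserved, and choosing $v$ away from the designated edge does not help. Likewise, the claim in your last paragraph is false: it is not true that a chordless cycle of length at least $4$ must contain an edge lying in no triangle. The third vertex of the triangle on a cycle edge may lie off the cycle, as it does here.

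The lemma survives only because this graph contains $S_3\in\Acal_6$ as a minor: contract $ad$ and delete $h$. So the exclusion of $\Acal_6$ must already be used to get the structure of $G$, not only in Stage 2 for the size bound. The missing step is to prove that $G$ is chordal. Let $v_1\cdots v_n$ with $n\ge 4$ be an induced cycle.
\begin{itemize}
\item Since $G$ is simplified and the cycle is induced, each edge $v_iv_{i+1}$ has a common neighbour $x_i$ off the cycle.
\item If $x_i=x_j$ for some $i\neq j$, that vertex has at least three neighbours on the cycle, which yields a $K_4$ minor.
\item Otherwise $x_1,x_2,x_3$ are distinct, and contracting the arc $v_4\cdots v_nv_1$ to a single vertex produces $S_3$.
\end{itemize}

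Once chordality is established, your reduction works cleanly. A simplicial vertex has degree at least $2$ by $2$-connectivity and at most $2$ since otherwise there is a $K_4$. Deleting it preserves chordality, $2$-connectivity (when $|V(G)|\ge 4$) and $K_4$-minor-freeness, so induction shows that $G$ is a $2$-tree. Your Stage 2 is then correct as written.
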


\begin{proof}

We claim that every induced cycle $C$ of $G$ must be a triangle, i.e., $G$ is chordal. To see this, suppose $|V(C)|\ge4$. Let $v_1,v_2,\dots,v_n$ be the vertices of $C$ in cyclical order. Since $G$ is simplified and $C$ is induced, the endpoints of every edge $v_iv_{i+1}$ in $C$ must have at least one common neighbor $x_i$ outside $C$ ($i\in[n]$ with indices taken modulo $n$). If $x_i=x_j$ for some $i,j\in[n]$ with $i\neq j$, then contracting from $v_{i+1}$ to $v_j$ and from $v_{j+1}$ to $v_i$ gives a $K_4$ minor (see \cref{fig1}), a contradiction since $K_4\in\Acal_4$. Therefore, the $x_i$'s are distinct. But now contracting from $v_4$ to $v_1$ gives an $S_{3}$ minor (see \cref{fig2}), a contradiction since $S_{3}\in\Acal_6$. Thus, $|V(C)|=3$.  

\begin{figure}[!h]
     \centering
     \begin{subfigure}{0.45\textwidth}
         \centering
         \begin{tikzpicture}[bezier bounding box, scale=0.8, every node/.style={scale=0.8}]
          \draw[thick] (0,0) edge[bend left=10] (1,0.5) (1,0.5) edge[decorate, decoration={snake, amplitude=0.4mm}] (2,0.5) (2,0.5) edge[bend left=10] (3,0) (0,0) edge[decorate, decoration={snake, amplitude=0.4mm}, bend right=120, looseness=2] (3,0); 
          \draw[thick] (0,0) -- (1.5,1.5) -- (1,0.5) (2,0.5) -- (1.5,1.5) -- (3,0);
          %\draw[thick] (0,0) -- (0.15,1) -- (1,0.5) (2,0.5) -- (2.85,1) -- (3,0);
          \foreach \i/\j in {0/0,1/0.5,2/0.5,3/0}
          \draw[thick] (\i, \j) node[uStyle] {};
          \draw[thick] (1.5,-0.5) node {$C$} (1.5,1.5) node[uStyle] {} (0.2,-0.3) node {$v_i$} (1,0.1) node {$v_{i+1}$} (2,0.1) node {$v_j$} (2.7,-0.3) node {$v_{j+1}$} (1.5,1.8) node {$x_i$};
         \end{tikzpicture}
         \caption{}
         \label{fig1}
     \end{subfigure}\hspace{2mm}
     \begin{subfigure}{0.45\textwidth}
     \centering
     \begin{tikzpicture}[bezier bounding box, scale=0.8, every node/.style={scale=0.8}]
          \draw[thick] (0,0) edge[bend left=10] (1,0.5) (1,0.5) -- (2,0.5) (2,0.5) edge[bend left=10] (3,0) (0,0) edge[decorate, decoration={snake, amplitude=0.4mm}, bend right=120, looseness=2] (3,0); 
          \draw[thick] (0,0) -- (0.15,1) -- (1,0.5) (2,0.5) -- (2.85,1) -- (3,0) (1,0.5) -- (1.5,1.4) -- (2,0.5);
          \foreach \i/\j in {0/0,1/0.5,2/0.5,3/0}
          \draw[thick] (\i, \j) node[uStyle] {};
          \draw[thick] (1.5,-0.5) node {$C$} (1.5,1.4) node[uStyle] {} (0.15,1) node[uStyle] {} (2.85,1) node[uStyle] {} (0.2,-0.3) node {$v_1$} (1,0.1) node {$v_2$} (2,0.1) node {$v_3$} (2.85,-0.3) node {$v_4$} (0.15,1.3) node {$x_1$} (1.5,1.7) node {$x_2$} (2.85,1.3) node {$x_3$};
         \end{tikzpicture}
     \caption{}
    \label{fig2}
     \end{subfigure}\hspace{2mm}
     \caption{Cycle $C$ from~\cref{thi_oplod} where (a) two edges $v_iv_{i+1}$ and $v_jv_{j+1}$ have a common neighbor $x_i$, and where (b) each edge has a distinct neighbor outside $C$.}
        \label{figs}
\end{figure}

It is easy to prove that every chordal, $K_4$-minor-free, biconnected graph is a 2-tree. Moreover, since every 2-tree on $n\ge 6$ vertices contains a $6$-vertex 2-tree as a minor (repeatedly delete simplicial vertices until you have 6 vertices) and $G$ minor-excludes every $6$-vertex 2-tree, we have $|V(G)|\le5$. Thus, $G$ is a 2-tree on $3$, $4$, or $5$ vertices.
\end{proof}

A \emph{tree of triangles} is a connected graph whose blocks 
are all triangles.

\begin{lemma}
\label{tree_blokd_b}
If $G$ is a tree of triangles that minor-excludes $\mathscr{D}_{k}$ and $\mathscr{C}_{k}$ for some $k\in\Nbbb_{\geq 1}$, then $G$ has at most $k^2$ blocks.
\end{lemma}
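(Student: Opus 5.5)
We reduce to \cref{msain_riop} via the block-cut tree. Let $G$ be a tree of triangles with $t$ blocks, and let $T$ be its block-cut tree, whose nodes are the $t$ triangle blocks and the cut vertices of $G$. We can also use an auxiliary graph $H$ obtained from $G$ by adding, for each triangle block $B$, a new vertex $b_B$ adjacent to the three vertices of $B$, and setting $S=\{b_B\}$. Simpler still, take $T$ itself with $S$ the set of block-nodes. Then $|S|=t$, and it suffices to show that $T$ has neither $P_k$ nor $K_{1,k}$ as an $S$-minor. Then \cref{msain_riop} gives $t\le k^2$.

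\emph{No $S$-minor $K_{1,k}$.} Suppose $T$ has an $S$-minor model $\{X_0,X_1,\dots,X_k\}$ of $K_{1,k}$ with centre $X_0$. Each $X_i$ is a connected subtree of $T$ containing some block-node. Let $Y_i\subseteq V(G)$ be the union of the vertex sets of the blocks and cut vertices in $X_i$. Then $Y_0$ is connected in $G$. For each leaf $i\ge 1$, $X_i$ contains a block $B_i$, and $X_i$ is joined to $X_0$ through a $T$-edge. Hence there is a path in $G$ from $Y_0$ into $B_i$ that stays inside $Y_i$ apart from its first vertex. Since $B_i$ is a triangle, this path enters $B_i$ at a vertex $u_i$, and $B_i$ has two further vertices forming an edge adjacent to $u_i$. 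Contracting $Y_0$ to a single vertex $c$, the path from $c$ to $u_i$ to a single vertex, and keeping one more vertex of $B_i$, gives a triangle on $c$. So we obtain $k$ triangles pairwise meeting only at $c$, that is, $\mathscr{D}_k$ as a minor of $G$, which is a contradiction. The point needing care is disjointness. Distinct $X_i$ are disjoint in $T$, but a cut-vertex node lies in only one $X_i$ while belonging to several blocks. We therefore assign each vertex of $G$ to the bag of the unique $X_i$ containing its node (cut vertex or block), and route through the chosen block in $X_i$ so that the two extra triangle vertices are not cut vertices shared with another bag. This bookkeeping is the main obstacle, and we handle it by choosing $u_i$ as the attachment vertex toward $X_0$ and the other two vertices of $B_i$, which belong only to nodes of $X_i$.

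\emph{No $S$-minor $P_k$.} Suppose $X_1,\dots,X_k$ form an $S$-minor model of $P_k$ in $T$. Each $X_i$ contains a block $B_i$, and consecutive bags are adjacent. Taking the corresponding vertex sets $Y_i$ in $G$, we obtain disjoint connected sets each containing a triangle, consecutively linked. Within $Y_i$, the entry point from $Y_{i-1}$ and the exit point toward $Y_{i+1}$ are connected through $B_i$. Contracting gives a sequence of triangles in which consecutive triangles share a vertex. The middle triangles must use two distinct vertices as entry and exit, which is possible by contracting paths inside $Y_i$ onto distinct vertices of $B_i$, since the two access paths reach $B_i$ in a tree-like fashion. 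This yields $\mathscr{C}_k$ as a minor, again a contradiction. Having excluded both $S$-minors, \cref{msain_riop} applied to $T$ gives $t=|S|\le k^2$.
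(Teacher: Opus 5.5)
Your step \emph{No $S$-minor $P_k$} is false, so \cref{msain_riop} cannot be applied with parameter $k$. In an $S$-minor model of $P_k$ in the block-cut tree $T$, a bag may be entered and left through the same cut node and pick up its block node from a pendant branch. That block's triangle then does not lie between the entry and exit points, and no chain is formed.

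Concretely, let $B_1,L_1,B_2$ be triangles sharing the cut vertex $v_1$, and let $B_2,L_2,B_3$ be triangles sharing the cut vertex $v_2$. Then $\{B_1\},\{v_1,L_1\},\{B_2\},\{v_2,L_2\},\{B_3\}$ is an $S$-minor model of $P_5$ in $T$. However, $G$ has $11$ vertices and $15$ edges, exactly as many as $\mathscr{C}_5$ and $\mathscr{D}_5$. So either of them as a minor would have to be isomorphic to $G$, which is impossible because $v_1$ has degree $6$. Hence $G$ satisfies the hypotheses with $k=5$, yet $T$ has $P_5$ as an $S$-minor. Your remark that the access paths reach $B_i$ ``in a tree-like fashion'' is exactly where the argument breaks: in the bag $\{v_1,L_1\}$ both access paths meet $L_1$ at the single vertex $v_1$.

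What the hypotheses actually control are genuine paths and leaves of $T$. A path of $T$ through $k$ block nodes gives $\mathscr{C}_k$ as a subgraph. Likewise, $k$ leaves of $T$ (all of them leaf blocks) give $\mathscr{D}_k$ after contracting everything except the two non-cut vertices of each leaf block. The paper uses exactly these two facts. Hence $T$ has at most $k-1$ leaves, and since block and cut nodes alternate, its diameter is at most $2k-4$. \cref{hko_take} then bounds the number of nodes by $k^2$.

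You could rescue your reduction by charging each bag whose block node lies off the spine path to a distinct leaf of $T$. This excludes $P_{2k-1}$ as an $S$-minor, but \cref{msain_riop} then only gives $|S|\le (2k-1)^2$, not $k^2$. To reach $k^2$ you have to count leaves and diameter directly, as the paper does.

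Your $K_{1,k}$ step is true in substance: the leaf bags lie in distinct components of $T-X_0$, each of which contains a leaf block. As written, though, your node-based $Y_0$ can be empty, e.g.\ when $X_0$ is a single triangle whose three vertices are all cut vertices.
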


\begin{proof}
Let $T$ be the block–cut tree of $G$ and let $B$ and $C$ be the vertices of $T$ that represent the blocks and cut vertices of $G$, respectively. If $T$ has at least $k$ leaves, then $G$ contains $\mathscr{D}_k$ as a minor. To see this, contract all edges not in leaf blocks in $G$ so that all their cut vertices merge into one vertex. Thus, $T$ has at most $\le k-1$ leaves. If $T$ has a path with $k$ vertices in $B$, then $G$ contains $\mathscr{C}_k$ as a minor. Thus, every path in $T$ has $\le k-1$ vertices in $B$. Note that a maximal path in $T$ starts and finishes on vertices of $B$ and alternates between vertices of $C$ and $B$. Therefore, the diameter of $T$ is at most $2k-4$. Since $T$ has $\le k-1$ leaves and diameter $\le 2k -4$, we have $|B|\le |V(T)|\le (k-1)(2k -4)/2+1\le k^2$,
by \cref{hko_take}.
\end{proof}

\begin{lemma}
\label{djri_osw}
If $G$ is a cyclic connected graph minor-excluding all graphs in $\Acal$
and also minor-excluding $\mathscr{D}_{k}$ and $\mathscr{C}_{k}$ for some $k\in\Nbbb_{\geq 2}$, then $1\leq \frac{|E(G)|}{|V(G)|}\leq \frac{3}{2}-\frac{1}{4k^2+14}$.
\end{lemma}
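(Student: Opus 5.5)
The plan is to reduce to a simplified graph, show that its block structure is very restricted, and then count edges block by block.

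\emph{Lower bound and simplification.} Since $G$ is connected and cyclic, $|E(G)|\geq |V(G)|$, which gives $\frac{|E(G)|}{|V(G)|}\geq 1$. For the upper bound, simplify $G$ to obtain a graph $G'$. By \cref{dis_pen}, each contraction of a reducible edge keeps the ratio at least $1$ and does not decrease it, so $|E(G)|/|V(G)|\leq |E(G')|/|V(G')|$. Moreover $G'$ is a connected minor of $G$, so it still minor-excludes every graph in $\Acal$, as well as $\mathscr{D}_k$ and $\mathscr{C}_k$. It therefore suffices to bound the ratio of $G'$. Since the ratio never drops below $1$, $G'$ is not a single vertex or edge.

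\emph{Block structure of $G'$.} A bridge $xy$ is always reducible, because any common neighbour of $x$ and $y$ would close a cycle through $xy$. Hence $G'$ has no bridges. A common neighbour of the endpoints of an edge lies in the same block as that edge, so every block of $G'$ is itself simplified and biconnected. By \cref{thi_oplod}, every block is a $2$-tree on $3$, $4$ or $5$ vertices.

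Next I control the non-triangle blocks. Given two non-triangle blocks, I contract the blocks on the block--cut-tree path between them until the two blocks share a vertex. If one of them has $5$ vertices, this yields a member of $\Acal_8$ as a minor; note that a $5$-vertex $2$-tree contains the $4$-vertex $2$-tree as a minor, so two $5$-vertex blocks also give a member of $\Acal_8$. Doing the same with three $4$-vertex blocks yields a member of $\Acal_{10}$. Consequently the non-triangle blocks are either at most one $5$-vertex block or at most two $4$-vertex blocks. In either case they contribute at most $2$ extra vertices compared with triangles.

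\emph{Bounding the number of blocks.} Let $b$ be the number of blocks of $G'$. I contract each non-triangle block down to a triangle using edges inside that block; this is possible because every $2$-tree contracts to $K_3$. Merging vertices inside a single block preserves the block--cut-tree structure, so the result is a tree of triangles with $b$ blocks. It is a minor of $G'$, so it excludes $\mathscr{D}_k$ and $\mathscr{C}_k$, and \cref{tree_blokd_b} gives $b\leq k^2$. I expect this contraction step, and checking that the resulting graph really is a tree of triangles, to be the main point needing care.

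\emph{Counting.} Let the blocks have $s_1,\dots,s_b$ vertices. Then $|V(G')|=1+\sum_i (s_i-1)$, and since a $2$-tree on $s$ vertices has $2s-3$ edges,
$$|E(G')|=\sum_i (2s_i-3)=2(|V(G')|-1)-b.$$
Hence
$$\frac{|E(G')|}{|V(G')|}=2-\frac{b+2}{|V(G')|}.$$
By the block-structure bound, $|V(G')|\leq 1+2b+2=2b+3$. Therefore
$$\frac{|E(G')|}{|V(G')|}\leq 2-\frac{b+2}{2b+3}=\frac{3}{2}-\frac{1}{4b+6}\leq \frac{3}{2}-\frac{1}{4k^2+6}\leq \frac{3}{2}-\frac{1}{4k^2+14}.$$
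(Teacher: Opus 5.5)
Your proposal is correct and follows essentially the paper's route: simplify, apply \cref{thi_oplod} to each block, use $\Acal_8$/$\Acal_{10}$ to restrict the non-triangle blocks, bound the number of blocks via \cref{tree_blokd_b}, then count. It also adds two small improvements: your identity $|E(G')|=2(|V(G')|-1)-b$ with $|V(G')|\le 2b+3$ replaces the paper's four-case computation and gives the slightly stronger bound $\frac{3}{2}-\frac{1}{4k^2+6}$, and contracting the $4$- and $5$-vertex blocks to triangles justifies applying \cref{tree_blokd_b}, which the paper invokes even when $G$ is not a tree of triangles. The step you flagged is fine: contracting inside a block can merge two cut vertices and so change the block--cut tree, but the other blocks are untouched and the image of that block is a single triangle block, so you still obtain a tree of triangles with exactly $b$ blocks.
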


\begin{proof}

Note that $\frac{|E(G)|}{|V(G)|}\ge1$ since $G$ is cyclic. By \cref{dis_pen}, we may assume $G$ is simplified.
Let $B$ be a block of $G$. Since $G$ is simplified, $B$ is biconnected.
Furthermore, $B$ is some  $2$-tree on
$i$-vertices where $i\in\{3,4,5\}$, by \cref{thi_oplod}.  
We call these blocks $i$-blocks, $i\in[3,5]$. 

Observe that $G$ (i) cannot have more than one $5$-block,
(ii) cannot have more than two $4$-blocks, and (iii) cannot have a $5$-block and a $4$-block. Indeed, if $G$ does not satisfy (i) or (iii) (resp. (ii)), then it contains some graph in $\Acal_8$ (resp. $\Acal_{10}$) as a minor.
Let $b$ be the number of $3$-blocks in $G$.
Because of \cref{tree_blokd_b}, $b\leq k^2$.
We distinguish the following cases:

\noindent\emph{Case 1.} All blocks of $G$ are 3-blocks.   Then 
$|V(G)|=2b+1$ and  $|E(G)|=3b$, therefore $\frac{|E(G)|}{|V(G)|} = \frac{3b}{2b+1} = \frac{3}{2} - \frac{3}{4b+2} \leq  \frac{3}{2} - \frac{3}{4k^2+2}$.

\noindent\emph{Case 2.}
One block of $G$ is a $4$-block and the rest are $3$-blocks. Then 
$|V(G)|=2b+4$ and  $|E(G)|=3b+5$, therefore $\frac{|E(G)|}{|V(G)|} = \frac{3b+5}{2b+4} = \frac{3}{2} - \frac{1}{2b+4} \leq  \frac{3}{2} - \frac{1}{2k^2+4}$.

\noindent\emph{Case 3.} One block of $G$ is a $5$-block and the rest are $3$-blocks. Then 
$|V(G)|=2b+5$ and  $|E(G)|=3b+7$, therefore $\frac{|E(G)|}{|V(G)|} = \frac{3b+7}{2b+5} = \frac{3}{2} - \frac{1}{4b+10} \leq  \frac{3}{2} - \frac{1}{4k^2+10}$.

\noindent\emph{Case 4.} Two blocks of $G$ are $4$-blocks and the rest are $3$-blocks.  Then 
$|V(G)|=2b+7$ and  $|E(G)|=3b+10$, therefore $\frac{|E(G)|}{|V(G)|} = \frac{3b+10}{2b+7} = \frac{3}{2} - \frac{1}{4b+14} \leq  \frac{3}{2} - \frac{1}{4k^2+14}$.

In any case, we have that $\frac{|E(G)|}{|V(G)|}\leq \frac{3}{2}-\frac{1}{4k^2+14}$.
\end{proof}

\begin{lemma}
\label{do_econnec_do}
There is a function $f_{\ref{do_econnec_do}}:\Nbbb^2\to\Nbbb$ such that
for every $k\in\Nbbb_{\geq 1}$, every connected  graph $G$, and  every $S\subseteq V(G)$,
if   
\begin{itemize}
\item[(i)] $G$ excludes both $\mathscr{M}_{k}$ and $\mathscr{D}_{k}$ as a minor and 
\item[(ii)]  for every connected component $C$ of $G-S$, either  $C$ contains a cycle or there are at least two edges between $S$ and $V(C)$,
\end{itemize}
then  $G-S$ has at most $f_{\ref{do_econnec_do}}(|S|,k)$ connected components.
\end{lemma}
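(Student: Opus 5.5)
We will bound the number of components of $G-S$ by a pigeonhole argument on how components attach to $S$. Let $s=|S|$ and call the connected components of $G-S$ simply \emph{components}. Since $G$ is connected, every component $C$ has at least one edge to $S$. We split the components into two types.

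\emph{Type A} components have edges to at least two distinct vertices of $S$. For each such component fix an unordered pair $\{u,v\}\subseteq S$ of distinct vertices that both have a neighbor in $C$. There are at most $\binom{s}{2}$ pairs. If some pair is chosen by $k$ components $C_1,\dots,C_k$, we obtain $K_{2,k}$ as a minor: keep $u$ and $v$, contract each $C_i$ to a single vertex adjacent to both, and delete everything else. To upgrade this to $\mathscr{M}_k$ we also need the edge $uv$. We get it with one extra component: if $k+1$ components share the pair, contract one of them together with $u$, which makes $u$ adjacent to $v$, and use the other $k$ components as the middle vertices. This yields $\mathscr{M}_k$, contradicting (i). Hence there are at most $k\binom{s}{2}$ Type A components.

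\emph{Type B} components send all their edges to a single vertex $u\in S$. By (ii), each such component either contains a cycle or has at least two edges to $u$. In the second case, since $C$ is connected, $u$ together with $C$ contains a cycle through $u$. In the first case, $C$ contains a cycle $Q$ and a path from $Q$ to a neighbor of $u$. Either way, contracting $C$ onto a suitable structure gives a triangle-minor model containing $u$ whose other two branch sets lie inside $C$.
- In the first situation, pick two neighbors $a,b$ of $u$ in $C$ and an $a$–$b$ path in $C$. Contracting it to an edge gives a triangle on $u,a,b$.
- In the cyclic situation, contract the connecting path into $u$. Then $u$ is adjacent to a vertex of the cycle, but a triangle through $u$ needs $u$ adjacent to two cycle vertices.

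Here is the fix for the cyclic case. Contract the cycle to a triangle $xyz$ and contract the path so that $u$ becomes adjacent to $x$. Then contract $xy$, so $u$, $xy$ and $z$ form a triangle. In detail, the branch sets are $\{u\}\cup P$, then the cycle minus $z$ ($P$ attaches to part of it), then $\{z\}$: $u$ is adjacent to the merged set through $P$, the merged set is adjacent to $z$ along the cycle, but $u$ needs to be adjacent to $z$ as well. Rather than belabor this, it is cleaner to use a weaker pattern that suffices. Each Type B component $C$ at $u$ yields, inside $G[C\cup\{u\}]$, a minor consisting of a triangle through $u$ or a triangle joined to $u$ by a path. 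If $k$ components all attach only to the same $u$, their disjoint triangles, each connected to $u$, give $k\cdot K_3$ plus an apex adjacent to one vertex of each triangle. Contracting each triangle's connecting path into $u$ gives a vertex $u'$ adjacent to one vertex of each triangle, which is still not $\mathscr{D}_k$.

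The correct construction is to contract, in each cyclic component, the cycle down to a triangle and the attachment path into one triangle vertex $w$, then contract the edge $uw$. Now $u$ is a vertex of the triangle itself. Doing this for all $k$ components gives $k$ triangles sharing the common vertex $u$, which is exactly $\mathscr{D}_k$. So for each fixed $u$ there are fewer than $k$ Type B components, and at most $s(k-1)$ in total.

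Combining the two types, we may take $f_{\ref{do_econnec_do}}(s,k)=k\binom{s}{2}+s(k-1)+1$, or any larger value. The main obstacle is to check carefully that the branch sets in the Type B construction are disjoint across different components. They are, because each model uses only $u$ and vertices of its own component, and all of them share only $u$. A second point to check is the case $s=0$: then $G$ is itself the single component, and the bound is at least $1$.
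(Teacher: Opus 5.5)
Your proof is correct and follows essentially the same route as the paper. The paper also splits the components of $G-S$ into two kinds. Those attaching only to a single $u\in S$ number fewer than $k$ per $u$, since $k$ of them give $k$ triangles through $u$, i.e.\ $\mathscr{D}_k$. Those attaching to a pair $\{u,v\}$ number at most $k$ per pair, since with $k+1$ of them one is contracted into $u$ to supply the edge $uv$ of $\mathscr{M}_k$.

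Your final Type B construction, where the attachment path is contracted so that $u$ itself becomes a triangle vertex, is slightly more careful than the paper's write-up. So is your $+1$ covering $S=\emptyset$. You should, however, delete the abandoned intermediate attempts in the Type B paragraph.
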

\begin{proof}
    Let $G$ be a connected graph, $S\subseteq V(G)$, and $k\in\Nbbb_{\geq 1}$. Observe that every component of $G-S$ has a neighbor in $S$ since $G$ is connected.

    \begin{figure}[!h]
        \centering
        \begin{tikzpicture}[scale=0.8, every node/.style={scale=0.8}]
        \begin{scope}
        \draw[thick] (0,1) node[uStyle] {} (0,1.3) node {$x$} (0,1) ellipse (2cm and 0.5cm) (0,1.8) node {$S$} (0,-2) ellipse (1.2cm and 1cm) (-0.55,-2) edge[decorate, decoration={snake, amplitude=0.4mm}] (0.55,-2) (-0.55,-2) -- (0,1) -- (0.55,-2) (-0.55,-2) node[uStyle] {} (0.55,-2) node[uStyle] {} (-0.55,-2.3) node {$u$} (0.55,-2.3) node {$v$} (0,-3.3) node {$C$};  
        \end{scope}

        \begin{scope}[xshift=8cm]
        \draw[thick] (0,1) node[uStyle] {} (0,1.3) node {$x$} (0,1) ellipse (2cm and 0.5cm) (0,1.8) node {$S$} (0,-2) ellipse (1.2cm and 1cm) (-0.6,-1.9) edge[bend left=45] (0,-1.5) (0,-1.5) edge[bend left=45] (0.6,-1.9) (-0.6,-1.9) edge[decorate, decoration={snake, amplitude=0.4mm}, in=-90, out=-90, looseness=1.5] (0.6,-2) (0,1) -- (0,-1.5) (0,-1.5) node[uStyle] {} (-0.6,-1.9) node[uStyle] {} (0.6,-1.9) node[uStyle] {} (-0.9,-1.9) node {$v$} (0.9,-1.9) node {$w$} (0.2,-1.25) node {$u$} (0,-2) node {$Y$} (0,-3.3) node {$C$};  
        \end{scope}
        \end{tikzpicture}
        \caption{Component $C\in\Qcal_x$ from \cref{do_econnec_do} where $C$ is a tree (left), and where $C$ contains a cycle $Y$ (right).}
        \label{fig-function1}
    \end{figure}

    Fix $x \in S$ and let $\Qcal_x$ denote the set of components of $G-S$ that only receive edges from $x$ in $S$. Observe that, for every $C\in\Qcal_x$, the graph induced by $C\cup\{x\}$ contains $K_3$ as a minor with $x\in V(K_3)$. Indeed, if $C$ is a tree, then there exist distinct $u,v \in V(C)$ such that $xu,xv\in E(G)$, by (ii). Contracting the path between $u$ and $v$ in $C$ into a single edge $uv$ gives a $K_3$ induced by $\{x,v,u\}$; see the left of~\cref{fig-function1}. If instead $C$ contains a cycle $Y$, then there exist $u$ on $Y$ adjacent to $x$, by the definition of $\Qcal_x$.   
    Let $v,w$ be the two consecutive neighbors of $u$ on $Y$. Contracting the subpath of $Y$ avoiding $x$ from $v$ to $w$ into a single edge $vw$ and contracting the edge $xu$ gives a $K_3$ induced by $\{x,v,w\}$; see the right of~\cref{fig-function1}. Thus, every component $C$ of $\Qcal_x$ yields a $K_3$ whose only vertex in $S$ is $x$. If $|\Qcal_x|\ge k$, then this collection of triangles implies $G$ contains $\mathscr{D}_k$ as a minor, a contradiction. Thus, $|\Qcal_x|<k$. 
    
    Now fix $x,y \in S$ and let $\Hcal_{x,y}$ denote the set containing every component $C$ of $G-S$ such that $\{x,y\} \subseteq N_{S}(V(C))$ (i.e. those are the components of $G-S$ which receive edges from at least two vertices, $x$ and $y$). Form $G'$ from $G$ by contracting every $C \in \Hcal_{x,y}$ into a single vertex. If $|\Hcal_{x,y}|\ge k+1$, then $G'$, and therefore $G$, contains $\mathscr{M}_{k}$ as a minor, a contradiction. Hence, $|\Hcal_{x,y}| < k+1$. 
    
    Recall that for every component $C$ of $G-S$ there exist $x,y\in S$ such that $C$ lies either in $\Qcal_x$ or in $\Hcal_{x,y}$. Thus, the total number of components is at most 
    \[
      k|S| + k\binom{|S|}{2} = k\binom{|S|+1}{2}
    \]
    Therefore, the lemma holds if we set $f_{\ref{do_econnec_do}}(|S|,k)\coloneqq k\binom{|S|+1}{2}.$ 
\end{proof}
Using \cref{msain_riop} we prove the following.
\begin{lemma}
\label{cosxl}
There is a function $f_{\ref{cosxl}}:\Nbbb^2\to\Nbbb$ such that 
for every $k\in\Nbbb_{\geq 1}$, every connected graph $G$, and  every $S\subseteq V(G)$,
if   
\begin{itemize}
\item[(i)] $G$  excludes both $\mathscr{M}_{k}$ and $\mathscr{D}_{k}$ as a minor and  
\item[(ii)] $G-S$ is connected,
\end{itemize}
then $G$ contains  at most  $f_{\ref{cosxl}}(|S|,k)$ edges  between $S$ and $V(G-S)$.
\end{lemma}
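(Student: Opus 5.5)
The plan is to bound separately the number of vertices of $S$ that send edges into $C:=G-S$, and the number of edges each such vertex sends into $C$. Write $S'\subseteq S$ for the set of vertices of $S$ with at least one neighbor in $V(C)$. The number of edges between $S$ and $V(C)$ is at most $\sum_{x\in S'}|N_G(x)\cap V(C)|$, and trivially $|S'|\le |S|$. So it suffices to find a function $g(k)$ with $|N_G(x)\cap V(C)|\le g(k)$ for every $x\in S$. We may then set $f_{\ref{cosxl}}(|S|,k)\coloneqq |S|\cdot g(k)$. In fact the bound will not even depend on $|S|$ except through this sum.

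Fix $x\in S$ and let $N_x:=N_G(x)\cap V(C)$. We apply \cref{msain_riop} to the connected graph $C$ with the vertex set $N_x$, and show that $C$ has neither $P_{k'}$ nor $K_{1,k'}$ as an $N_x$-minor for a suitable $k'=O(k)$. \cref{msain_riop} then gives $|N_x|\le k'^2$.

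\textbf{Star case.} Suppose $C$ has a $K_{1,k'}$ $N_x$-minor model $\{X_0,X_1,\dots,X_{k'}\}$. Each branch set meets $N_x$, so $x$ is adjacent to every $X_i$. Contract $X_0$ to a vertex $c$, each $X_i$ to a vertex $c_i$, and $x$ stays as it is. Then $c$ and $x$ are both adjacent to all $c_i$, and $x$ is adjacent to $c$. This is $\mathscr{M}_{k'}$ (two adjacent hubs joined to $k'$ common vertices). Hence $k'\ge k$ already yields the forbidden minor $\mathscr{M}_{k}$.

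\textbf{Path case.} Suppose $C$ has a $P_{k'}$ $N_x$-minor model $X_1,\dots,X_{k'}$. Group consecutive branch sets in pairs $X_{2i-1}\cup X_{2i}$. Each union is connected and contains two neighbors of $x$ (one in each part), and distinct pairs are disjoint. Inside the pair, a path from the $N_x$-vertex of $X_{2i-1}$ to that of $X_{2i}$ together with $x$ gives a cycle through $x$. Contracting this path to an edge gives a triangle on $x$ plus two further vertices. The $\lfloor k'/2\rfloor$ triangles so obtained share only $x$, so we get $\mathscr{D}_{\lfloor k'/2\rfloor}$ as a minor (delete the edges between consecutive pairs). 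Taking $k'=2k$ gives $\mathscr{D}_k$, a contradiction. The same $k'=2k$ also works for the star case.

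Consequently $|N_x|\le 4k^2$ for each $x\in S$, and $f_{\ref{cosxl}}(s,k)\coloneqq 4k^2 s$ works. The step I expect to need the most care is the path case: I must verify that the pairing gives vertex-disjoint triangles meeting only at $x$. This holds because the branch sets are disjoint and $x\notin V(C)$. I must also check that dropping the edges between consecutive pairs is allowed in a minor, which it is. The rest is direct bookkeeping with \cref{msain_riop}.
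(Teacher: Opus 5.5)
Your proof is correct and follows the paper's argument. You fix $x\in S$, apply \cref{msain_riop} with parameter $2k$ to the connected graph $G-S$ and the set $N_G(x)\cap V(G-S)$, turn a $K_{1,2k}$ into $\mathscr{M}_{2k}$ and a $P_{2k}$ into $\mathscr{D}_k$ using $x$, and sum the bound $(2k)^2$ over $S$; your pairing argument for the path and apex construction for the star supply the details that the paper compresses into ``since all vertices of $X$ are adjacent to $x$.''
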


\begin{proof}
 
Let $G$ be a connected graph, $S\subseteq V(G)$, and $k\in\mathbb N_{\ge1}$. Fix $x\in S$ and let $X=:|N_{G}(x)\cap V(G-S)|$. Since $G-S$ is connected by (ii), if $|X|>(2k)^2$, then the connected graph $G-S$
contains as an $X$-minor either
$K_{1,2k}$ or $P_{2k}$, by \cref{msain_riop}. This implies that $G$ contains either $\mathscr{M}_{k}$ or $\mathscr{D}_{k}$ as a minor (since all vertices of $X$ are adjacent to $x$), contradicting (i). Thus, $|X|\le (2k)^2$.

We conclude that $G$ contains at most 
$
f_{\ref{cosxl}}(|S|,k) \coloneqq  |S|\cdot (2k)^2
$
edges between $S$ and $V(G-S)$, as required.
\end{proof}

We are now ready to prove the main result of this section.  

\begin{lemma}
\label{theo_asdvre}
If $\Gcal$ is a graph class that does not contain any class in $\Abbb$
as a subset and excludes the graphs $\mathscr{D}_k$, $\mathscr{C}_k$ and $\mathscr{M}_k$ as minors, then $\delta(\Gcal)\le \frac{3}{2}-\frac{1}{4k^2+14}$.
\end{lemma}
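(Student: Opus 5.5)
The plan mirrors \cref{l664gdjd}: split each graph of $\Gcal$ into a bounded ``bad'' part and a ``good'' part, where every good component has density at most $\frac{3}{2}-\frac{1}{4k^2+14}$ by \cref{djri_osw}. Since $\Gcal$ contains no class $\mathscr{P}^{A}\!\downarrow$ with $A\in\Acal$, and $\Acal$ is finite, there is a constant $c$ such that no $G\in\Gcal$ contains $c$ disjoint copies of any graph of $\Acal$ as minors. Every graph in $\Acal$ is planar. So \cref{erpos}, applied with $\Zcal=\Acal$, gives each $G\in\Gcal$ a set $S$ with $|S|\le f(n(\Acal),c)=:s$ such that $G-S$ excludes all graphs of $\Acal$ as minors. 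We may assume $G$ is connected, or handle components separately; $S$ is then distributed among at most $s$ components, so only boundedly many components meet $S$.

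Next we bound the extra edges created by $S$. In $G-S$, a component that is a tree and receives at most one edge from $S$ contributes at most $|V(C)|$ edges, counting that edge. A cyclic component $C$ of $G-S$ contributes at most $(\frac{3}{2}-\varepsilon)|V(C)|$ edges internally, where $\varepsilon=\frac{1}{4k^2+14}$, by \cref{djri_osw}; this applies because $C$ excludes $\Acal$, $\mathscr{D}_k$ and $\mathscr{C}_k$. The remaining components are the cyclic ones together with the trees having at least two edges to $S$. Within a component of $G$ meeting $S$, there are at most $f_{\ref{do_econnec_do}}(s,k)$ such components by \cref{do_econnec_do}. Each of them sends at most $f_{\ref{cosxl}}(s,k)$ edges to $S$ by \cref{cosxl}, applied to $G[S\cup V(C)]$ after contracting, or restricting to, a connected graph containing $S$. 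The two lemmas need connectivity and minor exclusion, and minors of $G$ inherit the exclusion of $\mathscr{M}_k$ and $\mathscr{D}_k$. This is the step I expect to be the main obstacle: arranging the connectivity hypotheses so that \cref{do_econnec_do} and \cref{cosxl} apply, for instance by first contracting parts of $G$ so that $S$ lies in connected pieces, or by treating each vertex of $S$ separately.

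Collecting the bounds, the edges inside $S$, the edges from $S$ to the special components, and the surplus edges of tree components with at least two edges to $S$ total at most a constant $p=p(s,k)$. All other edges lie in components of $G-S$, or are single attaching edges of tree components; either way each component $C$ contributes at most $(\frac{3}{2}-\varepsilon)|V(C)|$ edges. Here we use that $1\le\frac{3}{2}-\varepsilon$, and that a tree with one attaching edge has exactly $|V(C)|$ edges. Hence $|E(G)|\le p+(\frac{3}{2}-\varepsilon)n$. Since $G$ may have many components, we sum over components; only boundedly many (at most $s$) components of $G$ meet $S$, so the total constant stays bounded. Dividing by $n$ and letting $n\to\infty$ gives $\delta(\Gcal)\le\frac{3}{2}-\frac{1}{4k^2+14}$.
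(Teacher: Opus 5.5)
Your proposal is the paper's proof. You take an Erd\H{o}s--P\'osa hitting set $S$ for $\Acal$, split the components of the remainder into ``poor'' pendant trees (exactly one edge to $S$, contributing exactly $|V(Z)|$ edges) and ``rich'' components, use \cref{do_econnec_do} and \cref{cosxl} to make the rich components and their edges to $S$ contribute only a constant, and use \cref{djri_osw} for the internal density. The one difference is that you apply \cref{erpos} once to all of $G$. The paper instead applies it separately to each of the at most $30(k-1)$ components containing a graph of $\Acal$ as a minor. Your variant is harmless and slightly cleaner, since it bounds $|S|$, and hence the number of components meeting $S$, in one step.

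The step you flag as the main obstacle is routine, and contraction is the wrong fix. Identifying vertices of $S$ collapses distinct edges into $S$, so it undercounts exactly what you must bound. The correct fixes are as follows.
\begin{itemize}
\item For \cref{do_econnec_do}: take a component $D$ of $G$ meeting $S$. Every poor component is a tree joined to the rest of $D$ by a single edge, so no path between two vertices outside it can pass through it. Deleting all poor components therefore leaves a connected graph $D'$, and $D'-S$ consists exactly of the rich components, which satisfy hypothesis (ii). This is precisely the paper's remark that $G[S_C\cup V(\bigcup\Qcal_C^1)]$ is connected. Alternatively, the proof of \cref{do_econnec_do} uses connectivity only to guarantee that each component of $G-S$ has a neighbor in $S$, so its count via $\Qcal_x$ and $\Hcal_{x,y}$ applies directly to all rich components of $G-S$ at once.
\item For \cref{cosxl}: apply it to $G[N_S(V(Z))\cup V(Z)]$, with $N_S(V(Z))$ playing the role of the deleted set. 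This graph is connected, removing that set leaves the connected graph $Z$, and the edges counted are exactly those between $S$ and $V(Z)$. This gives at most $|S|(2k)^2$ such edges.
\end{itemize}
With these two observations, your bound $|E(G)|\le p+(\frac{3}{2}-\varepsilon)n$ and the limit argument go through as written.
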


\begin{proof}
 Let $\Gcal$ be a minor-closed graph class for which there exists $k\in\Nbbb_{\ge1}$ 
such that every graph  $G\in\Gcal$ excludes $\mathscr{D}_{k}$, $\mathscr{C}_{k}$, $\mathscr{M}_{k}$,
and also excludes $\mathscr{P}^{A}_{k}$, for every $A\in \Acal$.
Note that, for every graph $A\in\Acal$, every graph $G\in \Gcal$ 
has at most $k-1$ connected components 
containing   $A$ as a minor.

Fix $G\in \Gcal$.
Let $\Ccal_{1}$ be the connected components of $G$
that contain some graph in $\Acal$ as a minor and let $\Ccal_{0}$
be those that minor-exclude all graphs in $\Acal$.
As we already observed $|\Ccal_{1}|\le |\Acal|\cdot (k-1)=30\cdot (k-1)$.

Let $C\in\Ccal_{1}$.
Observe that for every graph $A\in \Acal$ it holds that 
$C$ excludes  $k\cdot A$ as a minor.  
As $\Acal$ contains some planar graph, by \cref{erpos}, there is some $c_{2}$, depending on $\Acal$, and some 
set $S_C\subseteq V(C)$ where $|S_C|\leq c_{2}$
such that $J_C\coloneqq C-S_{C}$ minor-excludes all 
graphs in $\Acal$.

We say that a connected component $Z$ of $J_{C}$ is \emph{$C$-poor} if it is a tree and  
there is only one edge between $S_C$ and the vertices of $Z$. We also say that $Z$ is \emph{$C$-rich} 
if it not poor. We denote by $\Qcal_C^{0}$  and $\Qcal_C^{1}$ the set of $C$-poor and $C$-rich connected components of $J_C$, respectively; see \cref{fig-longlem}. 
Observe that the  graph $G[S\cup V(\bigcup\Qcal_{C}^{1})]$
is connected, therefore we may apply  to it  \cref{do_econnec_do}
and  obtain that $|\Qcal_C^1|\leq  f_{\ref{do_econnec_do}}(c_{2},k)$

We now partition the edges of $C$ as follows:
Let $E_{C}^{\mathsf{p}}$ be the edges of $C$ with at least one endpoint in a  $C$-poor component, let $E_{C}^{\mathsf{r}}$  be the edges  of the  rich components and let $E_{C}^{\mathsf{s}}$ be the edges  of $C$ 
with one endpoint in $S$ and the other in either $S$ or in some vertex of a $C$-rich component. Notice that $\{E_{C}^{\mathsf{p}},E_{C}^{\mathsf{r}},E_{C}^{\mathsf{s}}\}$ is a  partition of $E(C)$; see \cref{fig-longlem}.
We also define $m_{C}^{\mathsf{p}}=|E_{C}^{\mathsf{p}}|$, $m_{C}^{\mathsf{r}}=|E_{C}^{\mathsf{r}}|,$ and 
$m_{C}^{\mathsf{s}}=|E_{C}^{\mathsf{s}}|$. Also we set 
$n^{\mathsf{s}}_{C}=|S_{C}|$ and let $n_{C}^{\mathsf{r}}$ (resp. $n_{C}^{\mathsf{p}}$) be   the number of vertices 
of the $C$-rich ($C$-poor) components of $J_{C}$.

\begin{figure}[!h]
    \centering
\scalebox{.78}{
    \begin{tikzpicture}[scale=0.85, every node/.style={scale=0.6}]
    \begin{scope}
    \draw[thick] (-2,1.5) rectangle (2,-3.5) (0,1.8) node {\huge{$\Ccal_0$}}; 
    \draw[thick, myblue] (0,0) ellipse (0.5cm and 0.5cm) (1,-2) ellipse (0.5cm and 0.5cm) (-1,-2) ellipse (0.5cm and 0.5cm); 
    \end{scope}
    \begin{scope}[xshift=4cm]
    \draw[thick] (4,2.5) edge[dotted] (0.5,0) (0.5,0) edge[dotted] (4,-4.5) (-2,1.5) rectangle (2,-3.5) (0,0) ellipse (0.5cm and 0.5cm) (1,-2) ellipse (0.5cm and 0.5cm) (-1,-2) ellipse (0.5cm and 0.5cm) (0,0.8) node {\huge{$C$}} (0,1.8) node {\huge{$\Ccal_1$}};
    \draw[thick, myothergreen] (-0.25,0) ellipse (0.1cm and 0.3cm) (-1.25,-2) ellipse (0.1cm and 0.3cm) (0.75,-2) ellipse (0.1cm and 0.3cm);
    \draw[thick, myred] (0.15,0.2) ellipse (0.2cm and 0.1cm) (-0.85,-1.8) ellipse (0.2cm and 0.1cm) (1.15,-1.8) ellipse (0.2cm and 0.1cm);
    \draw[thick, myblue] (0.15,-0.2) ellipse (0.2cm and 0.1cm) (-0.85,-2.2) ellipse (0.2cm and 0.1cm) (1.15,-2.2) ellipse (0.2cm and 0.1cm);
    \end{scope}
    \begin{scope}[xshift=9cm]
    \draw[thick] (-1,2.5) rectangle (6,-4.5);
    \draw[thick, decorate, decoration={calligraphic brace, amplitude=3mm}] (2.3,1.2) -- (5.7,1.2);
    \draw[thick, decorate, decoration={calligraphic brace, mirror, amplitude=3mm}] (2.3,-3.3) -- (5.7,-3.3);
    \draw[thick, myothergreen] (0,-1) ellipse (0.5cm and 2cm) (0,1.3) node {\huge{$S_C$}};
    \draw[thick, myred] (3,0.5) circle (0.7cm) (5,0.5) circle (0.7cm) (4,1.9) node {\huge{$\Qcal^0_C$}};
    \draw[thick, myblue] (3,-2.5) circle (0.7cm) (5,-2.5) circle (0.7cm) (4,-4) node {\huge{$\Qcal^1_C$}};  
    \draw[thick] (0,0) edge[myothergreen] (0,-1) (0,-1) edge[myothergreen] (0,-2) (3,1) edge[myred] (3,0.2) (3,1) edge[myred] (2.7,0.2) (3,1) edge[myred] (3.3,0.2) (3,1) edge[myred] (0,0)  (5,0.8) edge[myred] (5,0.2) (5,0.2) edge[myred, bend left=20] (0,-1) (2.7,-2.5) edge[myblue] (3.3,-2.5) (2.7,-2.5) edge[myothergreen] (0,-2) (3.3,-2.5) edge[myothergreen] (0,-1) (4.7,-2.8) edge[myblue] (5.3,-2.8) (5.3,-2.8) edge[myblue] (5,-2.2) (5,-2.2) edge[myblue] (4.7,-2.8) (5,-2.2) edge[myothergreen, bend right=5] (0,-1);
    \foreach \i/\j/\c in {0/0/myothergreen, 0/-1/myothergreen, 0/-2/myothergreen, 3/1/myred, 3/0.2/myred, 3.3/0.2/myred, 2.7/0.2/myred, 5/0.8/myred, 5/0.2/myred, 2.7/-2.5/myblue, 3.3/-2.5/myblue, 4.7/-2.8/myblue, 5.3/-2.8/myblue, 5/-2.2/myblue}
    \draw[thick] (\i,\j) node[uStyle, draw=\c] {};
    \end{scope}
    \end{tikzpicture}
}
    \caption{An example graph $G$ from \cref{theo_asdvre}: Components of $G$ are split into $\Ccal_0$ and $\Ccal_1$. An enlargement of a component $C\in\Ccal_1$ is shown to the right with examples of $S_C$ (green), $\Qcal^0_C$ (red), and $\Qcal^1_C$ (blue). Edges $E^\mathsf{p}_C$, $E^\mathsf{r}_C$, and $E^\mathsf{s}_C$ are also shown in red, blue, and green, respectively. All blue components combined form $Q$.}
    \label{fig-longlem}
\end{figure}

Observe that, given that there are $q$  $C$-poor components,
the edges in $E_{C}^{\mathsf{p}}$ are $n^{\mathsf{p}}_{C}-q$
plus  $q$ extra edges, one from each 
$C$-poor  component to a vertex of $S_{C}$. Therefore,
\begin{eqnarray}
m_{C}^{\mathsf{p}} & = & n_{C}^{\mathsf{p}}.\label{eq_esq_oplos}
\end{eqnarray}

    \begin{claim}\label{cl_diagGrid}
       $|m_{C}^{\mathsf{s}}|\leq \binom{c_{2}}{2}+f_{\ref{do_econnec_do}}(c_{2},k)\cdot f_{\ref{cosxl}}(c_{2},k)$.
    \end{claim}

\begin{claimproof} 
Let $Z\in \Qcal_C^{1}$.
Notice that, from \cref{cosxl}, there are at most $f_{\ref{cosxl}}(c_{2},k)$
edges in the connected graph $C$ with one endpoint in $S_{C}$ and the other in $V(Z)$. 
The claim follows as $|\Qcal_C^{1}|\leq f_{\ref{do_econnec_do}}(c_{2},k)$
and there are at most $\binom{c_{2}}{2}$ edges with both endpoints in $S_C$.
\end{claimproof}

Let $Q_{C}=\bigcup \Qcal_{C}^{1}$, i.e., $Q_{C}$ is the union of all $C$-rich components of $J_{C}$.  
Also we set $$Q=\bigcup \Ccal_{0}\cup\bigcup_{Z\in\Ccal_{1}}Q_Z$$  
and observe that every connected  component of $Q$ excludes  every  graph in $\Acal$ as a minor. \medskip

\begin{claim} \label{cl_asverga}
 $\frac{|E(Q)|}{|V(Q)|}\leq \frac{3}{2}-\frac{1}{4k^2+14}$.
\end{claim}

\begin{claimproof} 
The claim is obvious if $Q$ is acyclic because $\frac{|E(Q)|}{|V(Q)|}\leq 1<\frac{3}{2}-\frac{1}{4k^2+14}$. Therefore we may assume that $Q$ is cyclic.
We construct the graph $M$ as follows:
we start by setting $M\coloneqq Q$ and we  delete from it all the components that are trees.  Next we simplify  every cyclic component  and among the  components remove every simplified component that does not  attain the maximum density. Then we observe that $\frac{|E(Q)|}{|V(Q)|}\leq \frac{|E(M)|}{|V(M)|}$. Then by applying \cref{djri_osw} to each component of $M$ we have that $\frac{|E(M)|}{|V(M)|}\leq \frac{3}{2}-\frac{1}{4k^2+14}$. We conclude that $\frac{|E(Q)|}{|V(Q)|}\leq \frac{|E(M)|}{|V(M)|}\leq \frac{3}{2}-\frac{1}{4k^2+14}$, as required.\medskip 
\end{claimproof}

\medskip

We now define $n^\mathsf{s}=\sum_{C\in\Ccal_{1}}n_{C}^{\mathsf{s}}$,
$n^\mathsf{r}=\sum_{C\in \Ccal_{0}}|V(C)|+\sum_{C\in\Ccal_{1}}n_{C}^{\mathsf{r}}$, and 
$n^\mathsf{p}=\sum_{C\in\Ccal_{1}}n_{C}^{\mathsf{p}}$. Observe that 
\begin{eqnarray}
n^{\mathsf{s}} & \leq  & {30\cdot (k-1)\cdot c_{2}}. \label{eq_djdkywkhgcd}
\end{eqnarray}

Similarly, we set 
$m^\mathsf{s}=\sum_{C\in\Ccal_{1}}m_{C}^{\mathsf{s}}$,
$m^\mathsf{r}=\sum_{C\in \Ccal_{0}}|E(C)|+\sum_{C\in\Ccal_{1}}m_{C}^{\mathsf{r}}$, and $m^\mathsf{p}=\sum_{C\in\Ccal_{1}}m_{C}^{\mathsf{p}}$.
Let $c\coloneqq k\cdot (\binom{c_{2}}{2}+f_{\ref{do_econnec_do}}(c_{2},k)\cdot f_{\ref{cosxl}}(c_{2},k))$.
Observe that from Claim~\ref{cl_diagGrid} and from Claim~\ref{cl_asverga} we obtain the following inequalities 
\begin{eqnarray}
 m^s & \leq & c\label{eq_fkduhs}\\
 m^r&\leq & (\frac{3}{2}-\frac{1}{4k^2+14})n^{r}\label{eq_fksdsuhs}
\end{eqnarray}

\allowdisplaybreaks
Let $G$ be a graph in $\Gcal$ on $n$ vertices and $m$ edges.
Observe that
\begin{eqnarray*}
m/n & =& \frac{m^{\mathsf{s}}+m^{\mathsf{r}}+m^{\mathsf{p}}}{n^{\mathsf{s}}+n^{\mathsf{r}}+n^{\mathsf{p}}}\\
& =^{\eqref{eq_esq_oplos}} & 
\frac{m^{\mathsf{s}}+m^{\mathsf{r}}+n^{\mathsf{p}}}{n^{\mathsf{s}}+n^{\mathsf{r}}+n^{\mathsf{p}}}\\
&\leq ^{\eqref{eq_fkduhs}}&
\frac{c+m^{\mathsf{r}}+n^{\mathsf{p}}}{n^{\mathsf{r}}+n^{\mathsf{p}}}\\
&=&
\frac{c+m^{\mathsf{r}}+n^{\mathsf{p}}}{n-n^{\mathsf{s}}}\\
&\leq ^\eqref{eq_djdkywkhgcd}&
\frac{c+m^{\mathsf{r}}+n^{\mathsf{p}}}{n-30\cdot (k-1)\cdot c_{2}}\\
&\leq ^{\eqref{eq_fksdsuhs}}&
\frac{c+(\frac{3}{2}-\frac{1}{4k^2+14})n^{\mathsf{r}}+n^{\mathsf{p}}}{n-30\cdot (k-1)\cdot c_{2}}\\
&=&
\frac{c+\frac{1}{2}n^{\mathsf{r}}+n^{\mathsf{r}}+n^{\mathsf{p}}-\frac{1}{4k^2+14} n^{\mathsf{r}}}{n-30\cdot (k-1)\cdot c_{2}}\\
&=&
\frac{c+(\frac{1}{2}-\frac{1}{4k^2+14})n^{\mathsf{r}}+n-n^{\mathsf{s}}}{n-30\cdot (k-1)\cdot c_{2}}\\
&\leq &
\frac{c+(\frac{1}{2}-\frac{1}{4k^2+14})n+n}{n-30\cdot (k-1)\cdot c_{2}}\\
&=& 
\frac{c+(\frac{3}{2}-\frac{1}{4k^2+14})n}{n-30\cdot (k-1)\cdot c_{2}}.
\end{eqnarray*}
Therefore, $\frac{\ex(\Gcal,n)}{n}\leq \frac{c+(\frac{3}{2}-\frac{1}{4k^2+14})n}{n-30\cdot (k-1)\cdot c_{2}}$ which implies that $\delta(\Gcal)\leq \frac{3}{2}-\frac{1}{4k^2+14}$.

\end{proof}
\begin{theorem}
    $\cobs(\Cbbb_{<\nicefrac{3}{2}})=\Abbb\cup\{\Dcal,\Ccal,\Mcal\}$.
    \label{theo_abkvaabo}
\end{theorem}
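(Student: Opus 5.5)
The plan mirrors the proof of \cref{adsfssbfbscf}. Three things must be shown for the family $\mathbb{F}=\Abbb\cup\{\Dcal,\Ccal,\Mcal\}$: (a) no member of $\mathbb{F}$ lies in $\Cbbb_{<\nicefrac{3}{2}}$; (b) every minor-closed class $\Gcal\notin\Cbbb_{<\nicefrac{3}{2}}$ contains some member of $\mathbb{F}$; and (c) $\mathbb{F}$ is a $\subseteq$-antichain. Together, (a), (b) and (c) say that $\mathbb{F}$ is exactly the set of $\subseteq$-minimal classes outside $\Cbbb_{<\nicefrac{3}{2}}$.

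For (a), \cref{lem_uokdhd} already gives $\delta(\Gcal)=\nicefrac{3}{2}$ for every $\Gcal\in\Abbb\cup\{\Dcal,\Ccal\}$ and $\delta(\Mcal)=2$. Hence none of these classes belongs to $\Cbbb_{<\nicefrac{3}{2}}$, and by monotonicity of $\delta$ under inclusion neither does any superclass.

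For (b), suppose $\Gcal$ contains no class of $\mathbb{F}$. Then $\Gcal$ fails to contain $\Dcal$, $\Ccal$ and $\Mcal$. Since these are the downward closures of minor-monotone parametric graphs, there exist $k_1,k_2,k_3$ with $\mathscr{D}_{k_1},\mathscr{C}_{k_2},\mathscr{M}_{k_3}\notin\Gcal$. Take $k=\max\{k_1,k_2,k_3,2\}$; by monotonicity, $\Gcal$ excludes $\mathscr{D}_k$, $\mathscr{C}_k$ and $\mathscr{M}_k$. Also, $\Gcal$ contains no class of $\Abbb$. Now \cref{theo_asdvre} applies and gives $\delta(\Gcal)\le\frac32-\frac{1}{4k^2+14}<\frac32$, so $\Gcal\in\Cbbb_{<\nicefrac{3}{2}}$.

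Part (c) is the main obstacle and requires case checks. The first step is to check that no class in $\mathbb{F}$ contains another. For two classes $\mathscr{P}^{A}\!\downarrow$ and $\mathscr{P}^{A'}\!\downarrow$ with $A\neq A'$ in $\Acal$, inclusion would force the connected graph $A'$ to be a minor of $k\cdot A$, hence of $A$ itself. I would rule this out by showing that the graphs in $\Acal$ form a minor antichain. Every graph in $\Acal$ has exactly $\frac32|V|$ edges. A proper minor of such a graph has density $<\nicefrac32$ unless it is another such extremal graph, which is impossible since $\Acal_4\cup\Acal_6$ graphs have distinct vertex counts or the same count with fixed structure. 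More concretely, I would compare block structures: graphs in $\Acal_6$ are 2-connected, while those in $\Acal_8$ and $\Acal_{10}$ have blocks that are $2$-trees on at most $5$ vertices, and \cref{thi_oplod} shows no $6$-vertex $2$-tree or $K_4$ arises inside them.

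Each class in $\Abbb$ has bounded component size, so it cannot contain $\Dcal$, $\Ccal$ or $\Mcal$, whose members are connected and arbitrarily large. Conversely:
\begin{itemize}
\item $\Dcal$ and $\Ccal$ consist of graphs whose blocks are $K_2$ or $K_3$, so they contain no graph of $\Acal$, nor $\mathscr{M}_2\supseteq K_4^-$ cycles of length $4$ with chords. Thus they contain no $\mathscr{P}^A\!\downarrow$ and not $\Mcal$, since $\mathscr{M}_k$ contains $C_4$ for $k\ge2$.
\item $\Mcal$ consists of graphs with a $2$-vertex cover, so it does not contain $\mathscr{P}^{K_4}\!\downarrow$ or $2\cdot A$, nor the long chain or daisy (a $3$-chain has three disjoint triangles).
\item Finally, $\Dcal\not\subseteq\Ccal$ because daisies have a vertex of arbitrarily high degree in triangles, which $\mathscr{C}_k$ cannot provide as a minor. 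Similarly $\Ccal\not\subseteq\Dcal$ because chains contain $2\cdot K_3$ while daisies do not.
\end{itemize}
These routine verifications complete the plan.
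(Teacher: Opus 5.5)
Your proposal is correct and follows the paper's proof. Part (a) is Lemma~\ref{lem_uokdhd}, part (b) is Lemma~\ref{theo_asdvre} after choosing a common $k\ge 2$, and part (c) is the antichain property, which the paper only asserts while you actually verify it. There is one small slip: a $3$-chain has only two disjoint triangles, not three. This still suffices, since vertex cover number is minor-monotone and already $\mathscr{C}_2=\mathscr{D}_2$ has vertex cover number $3>2$. For the antichain property inside $\Acal$, the density heuristic is unnecessary; the block comparison (a $2$-connected minor lies in a single block) together with equal edge counts $\frac32|V|$ for equal vertex counts settles every pair.
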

\begin{proof}
    
Notice first that none of the classes $\mathcal H \in \Abbb \cup\{\mathcal D,\mathcal M,\mathcal C\}$  is a subset of the other. Therefore, they constitute a $\subseteq$-anti-chain. Let $\Gcal\in\Cbbb_{<\nicefrac{3}{2}}$. By \cref{lem_uokdhd}, if $\mathcal G$ contains some $\mathcal H \in \Abbb \cup\{\mathcal D,\mathcal M,\mathcal C\}$ as a subset, then $\delta(\Hcal) \in \{\nicefrac{3}{2},2\}$, and therefore $\delta(\mathcal{G}) \ge \delta(\mathcal H)\geq \nicefrac{3}{2}$. So, $\Gcal$ does not contain any of $\Abbb,\Dcal,\Ccal,\Mcal$ as subsets. Now by \cref{theo_asdvre},  $\delta(\Gcal)\leq \frac{3}{2}-\frac{1}{4k^2+14}<\frac{3}{2}$.
\end{proof}

\subsection{Class obstructions for $\Cbbb_{\leq \delta}$ with $\delta\in[0,1)$}
\label{subsec_less1}

We now proceed with the identification of all obstructions for  $\Cbbb_{\leq \delta}$ in the case where $\delta\in[0,1)$.

We make use of the following trivial fact.

\begin{observation}\label{obsvarious}
Assume $a,a',b,b'\in \mathbb{R}_{>0}$, $a<b$, and $a'<b'$. If $\frac{a}{b}\leq\delta$ and $\frac{a'}{b'}\leq\delta$, then $\frac{a+a'}{b+b'}\leq\delta$.      
\end{observation}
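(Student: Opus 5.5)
We prove the statement by clearing denominators. The hypotheses $\frac{a}{b}\le\delta$ and $\frac{a'}{b'}\le\delta$, together with $b,b'>0$, are equivalent to $a\le \delta b$ and $a'\le \delta b'$. Adding these two inequalities gives $a+a'\le \delta(b+b')$. Since $b+b'>0$, we may divide by it to obtain $\frac{a+a'}{b+b'}\le\delta$, which is the claim.

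Only the positivity of $b$ and $b'$ is used, since it guarantees that multiplying and dividing by the denominators preserves the direction of the inequalities. The assumptions $a,a'>0$, $a<b$ and $a'<b'$ are not needed; they presumably record the setting in which the observation will be applied, namely edge/vertex ratios of components of density below $1$.

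The same argument extends by induction to any finite family of ratios. This is the form in which the observation will be used, to bound the density of a disjoint union of components by the maximum component density. There is no real obstacle.
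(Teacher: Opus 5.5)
Your proof is correct, and it is the standard argument the paper leaves implicit: it calls the statement a ``trivial fact'' and gives no proof. Your observation that only $b,b'>0$ is needed is worth keeping, since in the forest application (\cref{hio8io3e}) some components may be isolated vertices with $a=0$, which the stated hypothesis $a>0$ technically excludes but your argument covers.
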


\paragraph{The construction.}

Recall the following definitions: $\mathscr{P}^{H}=\langle \mathscr{P}^{H}_{k}\rangle_{k\in \Nbbb}$ and $\Gcal^H = \mathscr{P}^H \!\downarrow$ where $\mathscr{P}^{H}_{k}=k\cdot H$,
 
$\mathscr{S}=\langle K_{1,k}\rangle_{k\in\Nbbb}$, $\Gcal^{1}=\mathscr{P}^{K_{3}}    \!\downarrow$, and $\Gcal^{2}=\mathscr{S}  \!\downarrow$. Given some $a\in\Nbbb_{\ge1}$, we denote by $\Tcal_{a}$ the set of all trees on $a$ vertices.

For the statement of the next theorem, we need  a ``second-order'' version of ${\sf min}$ to apply to minor-closed graph classes. In particular, 
given a finite set $\Lbbb$ of minor-closed graph classes,  ${\sf min}(\Lbbb)$
is the subset of $\Lbbb$ that contains the $\subseteq$-minimal elements of $\Lbbb$.

\begin{theorem}
\label{hio8io3e}
For every $\delta\in[0,1)$, it holds that $\cobs(\Cbbb_{\leq \delta})={\sf min}\big(\{\Gcal^{1},\Gcal^{2}\}\cup \{\Gcal^{T}\mid T\in\Tcal_{x+2}\}\big)$ where $x=\lfloor\frac{\delta}{1-\delta}\rfloor$.
\end{theorem}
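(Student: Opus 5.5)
We argue in two directions: every class in the list has density $>\delta$, and every minor-closed class avoiding all of them has density $\le\delta$. Minimality then follows because we take ${\sf min}$ of the list: any class in $\cobs(\Cbbb_{\leq\delta})$ contains some listed class, hence by minimality is equal to it, and it survives ${\sf min}$. Conversely, every $\subseteq$-minimal listed class must itself be a class obstruction. Indeed, each proper minor-closed subclass of it avoids all listed classes, since the listed classes form a family whose minimal elements are exactly ${\sf min}$, and so has density $\le\delta$. Note also that $x=\lfloor\frac{\delta}{1-\delta}\rfloor$ is the largest integer with $\frac{x}{x+1}\le\delta$, so $\frac{x+1}{x+2}>\delta$.

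\textbf{Lower bounds.} By \cref{jvdkjhjkdfkd}, $\delta(\Gcal^{1})=\delta(\Gcal^{2})=1>\delta$. For $T\in\Tcal_{x+2}$, the class $\Gcal^{T}$ contains the forests $k\cdot T$ plus a remainder, giving $\ex(\Gcal^T,n)\ge\lfloor n/(x+2)\rfloor(x+1)$. Hence $\delta(\Gcal^T)\ge\frac{x+1}{x+2}>\delta$.

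\textbf{Upper bound.} Let $\Gcal$ contain none of $\Gcal^1$, $\Gcal^2$, or $\Gcal^T$ for $T\in\Tcal_{x+2}$. Then there is $k$ such that $k\cdot K_3\notin\Gcal$, $K_{1,k}\notin\Gcal$, and $k\cdot T\notin\Gcal$ for every $T\in\Tcal_{x+2}$. We also need $P_k$ excluded. This holds because $P_{N}$ contains $k$ disjoint copies of $P_{x+2}$, and $P_{x+2}\in\Tcal_{x+2}$. Take $G\in\Gcal_n$, and argue as in \cref{l664gdjd}. Fewer than $k$ components contain cycles, and each has bounded size ($\le k^k$ by bounded degree and diameter), so together they contribute $O(1)$ vertices and edges. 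Among the tree components, call a tree \emph{big} if it has $\ge x+2$ vertices. Every big tree contains some $T\in\Tcal_{x+2}$ as a subgraph, via a connected subtree on $x+2$ vertices. Since there are finitely many trees on $x+2$ vertices, by pigeonhole fewer than $k|\Tcal_{x+2}|$ components are big. Big trees have at most $k^2$ vertices by \cref{hko_take}, so they also contribute $O(1)$. Every remaining component is a tree on $s\le x+1$ vertices, with density $\frac{s-1}{s}\le\frac{x}{x+1}\le\delta$. By \cref{obsvarious}, their union has $m'\le\delta n'$, where $n'$ and $m'$ are its vertex and edge counts; isolated vertices only help. Thus $|E(G)|\le\delta n+O(1)$, and so $\delta(\Gcal)\le\delta$.

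\textbf{Main obstacle.} The delicate step is being careful that minimality and the ${\sf min}$ operator interact correctly. For instance, some $\Gcal^T$ may contain $\Gcal^2$, as when $T$ is a star and $x$ is small. More substantively, for $x=0$ the set $\Tcal_2=\{K_2\}$ and $\Gcal^{K_2}$ is contained in the others. The equivalence between containing a listed class and having density $>\delta$ handles all of this uniformly, so I would state it as the central claim and derive the set equality from it. I would also double-check that excluding $P_k$ genuinely follows, which it does via the path in $\Tcal_{x+2}$.
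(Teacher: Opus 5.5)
Your proof is correct. It differs from the paper's in the upper bound.

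The paper applies the Erd\H{o}s--P\'osa property (\cref{erpos}) to $\{K_3\}\cup\Tcal_{x+2}$. This gives a set $S$ of bounded size such that $G-S$ is a forest whose components have at most $x+1$ vertices. The edges at $S$ are then bounded by $c|S|$ using the excluded star, so the paper never needs to control component sizes.

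You instead argue component by component, as in \cref{l664gdjd}:
\begin{itemize}
\item Your observation that $\Gcal^{P_{x+2}}\not\subseteq\Gcal$ excludes a long path. In the size bounds you should use $N=k(x+2)$ rather than $k$, or simply enlarge $k$; this is cosmetic.
\item Together with the degree bound, this makes every component of $G$ of bounded size.
\item Since components are disjoint, pigeonhole directly bounds the number of cyclic or big components.
\end{itemize}
This avoids the Erd\H{o}s--P\'osa machinery entirely and gives an explicit constant in $|E(G)|\le\delta n+C$.

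What the paper's route buys is uniformity with \cref{theo_asdvre} and \cref{theo_vrighjivmn}. There, long paths are allowed and components can be arbitrarily large, so a bounded hitting set is the tool that transfers. Your route exploits the fact that below density $1$ all components are bounded.

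Your derivation of the set equality from the equivalence ``$\delta(\Gcal)>\delta$ iff $\Gcal$ contains a listed class'' is also sound. It is more complete than the paper, which leaves that step implicit.

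Two side remarks in your last paragraph are inaccurate, though the argument does not rely on them:
\begin{itemize}
\item No $\Gcal^T$ contains $\Gcal^2$. Every graph in $\Gcal^T$ has components on at most $x+2$ vertices, so $K_{1,x+2}\notin\Gcal^T$.
\item $\Gcal^{K_2}\not\subseteq\Gcal^2$, since $2\cdot K_2\in\Gcal^{K_2}\setminus\Gcal^2$.
\end{itemize}
The only containments among the listed classes are $\Gcal^T\subsetneq\Gcal^1$, which happen exactly when $x\le 1$.
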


\begin{proof}
Let $\Gcal$ be a minor-closed graph class. If $\Gcal^{1}\subseteq \Gcal$ or $\Gcal^{2}\subseteq \Gcal$, then by \cref{jvdkjhjkdfkd}, it follows that $\delta(\Gcal)\geq 1>\delta.$ Recall that $\frac{x}{x+1}\leq \delta$, by the definition of $x$.

Now let $T\in \Tcal_{x+2}$ and assume that $\Gcal^{T}\subseteq \Gcal$.
For every $n\in\Nbbb$, let $k:=\lfloor n/(x+2)\rfloor$
and define $G_{n}$ to be the disjoint union of 
 $k\cdot T$ and $n   \!\mod (x+2)$ isolated vertices.
 Notice that $G_{n}\in \Gcal^{T}_n\subseteq \Gcal_n$.
 Moreover, $|E(G_{n})|=(x+1)\cdot \lfloor n/(x+2)\rfloor$. Therefore, $\frac{{\sf ex}(\Gcal,n)}{n}\geq \frac{(x+1)\cdot \lfloor n/(x+2)\rfloor}{n},$ which implies that  $$\delta(\Gcal)=\lim_{n\to\infty}\frac{{\sf ex}(\Gcal,n)}{n}\geq \lim_{n\to\infty}\frac{(x+1)\cdot \lfloor n/(x+2)\rfloor}{n}=\frac{x+1}{x+2}>\delta.$$
 To verify the last inequality,
observe that $x=\lfloor\frac{\delta}{1-\delta}\rfloor>\frac{\delta}{1-\delta}-1=\frac{2\delta-1}{1-\delta}$. Because $\delta<1$, this implies $x(1-\delta)>2\delta-1\Rightarrow x-\delta x>2\delta-1\Rightarrow x+1>\delta(2+x)\Rightarrow \frac{x+1}{x+2}>\delta$. 
Thus, if $\Gcal$ contains any of the classes 
in $\{\Gcal^{1},\Gcal^{2}\}\cup \{\Gcal^{T}\mid T\in\Tcal_{x+2}\}$ as a subset, 
then $\Gcal\not\in\Cbbb_{\leq \delta}$.
\medskip

Now assume that $\Gcal$ does not contain any of the classes 
in ${\sf min}\big(\{\Gcal^{1},\Gcal^{2}\}\cup \{\Gcal^{T}\mid T\in\Tcal_{x+2}\}\big)$ as a subset.
This means that there is some $c$ such that  $\Gcal$ minor-excludes $K_{1,c}$, $c\cdot K_{3}$,  and every $c\cdot T$ for all $T\in \Tcal_{x+2}$.  
Let $G\in \Gcal$. Since $K_{3}$ is  planar, there is a set $S\subseteq V(G)$ where $|S|$ is bounded by some function of $c$ and $x$, and $G-S$  excludes $K_{3}$ 
and every tree in $\Tcal_{x+2}$ as minors, by \cref{erpos}. Let $G':=G-S$. Since $G'$ minor-excludes $K_3$,
it follows that $G'$ is a forest. Furthermore, since $G'$ minor-excludes
all trees of size $x+2$, each connected component of $G'$ is a tree $Q$ where $|V(Q)|\leq x+1$.
This implies that $\frac{|E(Q)|}{|V(Q)|}=\frac{|V(Q)|-1}{|V(Q)|}\leq \frac{x}{x+1}\leq \delta$.  By \cref{obsvarious}, $G'$ also has density at most $\delta$. 
Finally, since $G$ minor-excludes $K_{1,c}$, the maximum degree
of $G$ is upper bounded by $c$. Therefore, if $F$ is the set of edges incident to the vertices of $S$,
then $|F|\leq c|S|$.

Let $G\in \Gcal_{n}$ where $|E(G)|={\sf ex}(\Gcal,n)$. 
Observe that $n=|S|+|V(G')|$ and $|E(G)|=|F|+|E(G[S])|+|E(G')|\leq c|S|+{|S|\choose 2}+|E(G')|$. 
Therefore, 
\begin{eqnarray*}
\frac{{\sf ex}(\Gcal,n
)}{n} & = & \frac{|E(G)|}{n}\leq \frac{c|S|+{|S|\choose 2}+|E(G')|}{|S|+|V(G')|}\\
& \leq  &  \frac{c|S|+{|S|\choose 2}+|E(G')|}{|V(G')|} \\
& =  & \frac{c|S|+{|S|\choose 2}}{|V(G')|}+\frac{|E(G')|}{|V(G')|}\\
& \leq  & \frac{c|S|+{|S|\choose 2}}{n-|S|}+\delta.
\end{eqnarray*}

Recall that $|S|$ is bounded by some constant depending on $c$ and $x$. Equivalently, there is an integer bounding 
$|S|$ that depends on the class $\Gcal$ and the choice of $\delta$. Let $c'$ be the smallest integer such that 
$c|S|+{|S|\choose 2}\leq c'$. This implies that $|S|\leq c'$.
Now 
$\delta(\Gcal)=\lim_{n\to\infty}\frac{{\sf ex}(\Gcal,n
)}{n}\leq \lim_{n\to\infty}(\frac{c'}{n-c'}+\delta)=\delta$; thus, $\Gcal\in\Cbbb_{\leq \delta}$.
\end{proof}

\subsection{Class obstructions for $\Cbbb_{\leq \delta}$ with $\delta\in[1,\nicefrac{3}{2})$}
\label{subsec_1_to_3_2}

We conclude this section  with the identification of all obstructions for  $\Cbbb_{\leq \delta}$ in the case where $\delta\in[1,\nicefrac{3}{2})$.
For this, we define a series of 
graph classes, introduced 
by Eppstein in\cite{Eppstein10Densities}, where he proved that their densities capture \textsl{exactly} the limit-densities in $[1,\nicefrac{3}{2})$. These classes are an important ingredient of our proofs.
\\

Given some $b\in\mathbb N$, we define the sets of graphs below.

\begin{itemize}
  \item $\Ecal^1_b$ is the set of connected graphs with exactly $b$ blocks, each block being a triangle (in this case $b\geq 1$).

  \item $\Ecal^2_b$ is the set of connected graphs with exactly $b+1$ blocks, where $b$ blocks are triangles and one is the unique $2$-tree on $4$ vertices.

  \item $\Ecal^3_b$: is the set of connected graphs with exactly $b+1$ blocks, where $b$ blocks are triangles and one is a $2$-tree on $5$ vertices.

  \item $\Ecal^4_b$: is the set of connected graphs with exactly $b+2$ blocks, where $b$ blocks are triangles and two are the unique $2$-tree on $4$ vertices.
  
\end{itemize}

For each $i\in\{1,2,3,4\},$ set
\[
(a_i,c_i)=
\begin{cases}
(0,1), & i=1,\\
(5,4), & i=2,\\
(7,5), & i=3,\\
(10,7),& i=4,
\end{cases}
\qquad
b_i=
\begin{cases}
1, & i=1,\\
0, & i=2,3,4.
\end{cases}
\]

Now for any integer $b\ge b_i$ and any $G\in \Ecal^i_b$ the density of $G$ is
\begin{eqnarray}
  \frac{|E(G)|}{|V(G)|}& = & \frac{3b+a_i}{2b+c_i}. \label{fig_density}  
\end{eqnarray}

Let $\delta$ be a limiting density in $[1,\nicefrac{3}{2})$. For each $i$, we define
\[
\beta_i(\delta)=\max\!\left\{\, b_i,\;
1+\left\lfloor \frac{\delta\,c_i-a_i}{\,3-2\delta\,}\right\rfloor \right\}.
\]
In other words, $\beta_i(\delta)$ is 
 the smallest integer $b$, such that every graph in $\Ecal^i_b$ has density strictly greater than $\delta$.
We set $\mathcal  U_{\delta} = \bigcup_{i \in [4]}\Ecal^i_{\beta_i(\delta)}$ and $\Ecal_{\delta} = \mathsf{min}(\Ucal_{\delta})$. Observe that $\beta_1(\delta)\geq 2$ for every limiting density $\delta \in [1,\nicefrac{3}{2})$.

\begin{observation}
\label{obs_stro23}
If $G$ is a graph with $\frac{|E(G)|}{|V(G)|}<\frac{3}{2}$ containing a leaf block $B$ that is a triangle, then removing all vertices of $B$ of degree $2$ results in a graph with density strictly smaller than that of $G$.
\end{observation}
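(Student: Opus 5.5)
Let $n=|V(G)|$, $m=|E(G)|$, and let $B$ be a triangle leaf block of $G$ with cut vertex $v$. The other two vertices $u,w$ of $B$ have degree exactly $2$. Indeed, since $B$ is a leaf block, $u$ and $w$ lie in no other block. Hence their only neighbours are the other vertices of $B$. If $v$ also had degree $2$, then $G$ would be a triangle component with $B$ as its only block. This is consistent with the statement only if $v$ is not a cut vertex. Since $B$ is a leaf block, $v$ is a cut vertex, so $v$ has degree $\ge 3$. So, under the paper's definition of leaf block, exactly two vertices are removed. If $G$ happens to have a component that is just a triangle, I will handle it by the same computation below with three vertices removed.

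Removing $u$ and $w$ deletes exactly three edges, $uv$, $wv$ and $uw$. This gives $G'=G-\{u,w\}$ with $n-2$ vertices and $m-3$ edges. I will also check that $G'$ is nonempty, which holds since $v$ remains. The plan is to compare $\frac{m-3}{n-2}$ with $\frac{m}{n}$ by cross-multiplying, which is valid since $n-2>0$ because $n\ge 3$. We have
\[
\frac{m-3}{n-2}<\frac{m}{n}\iff n(m-3)<m(n-2)\iff -3n<-2m\iff \frac{m}{n}<\frac{3}{2},
\]
and the last inequality is exactly the hypothesis. In the degenerate case of a triangle component, removing all three degree-$2$ vertices deletes $3$ vertices and $3$ edges. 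Then $\frac{m-3}{n-3}<\frac{m}{n}$ if and only if $m<n$. This is false in general, so that case would need the removed set to be only the two non-cut vertices. I will therefore stick to the leaf-block reading, where $v$ is a cut vertex.

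The only point needing care is to justify the degree counts, namely that the degree-$2$ vertices of $B$ are exactly the two non-cut vertices and that the edges removed are exactly the three edges of $B$. This follows from the block structure: an edge belongs to a unique block, and a non-cut vertex lies in a unique block. The rest is the one-line cross-multiplication above. It is essentially the mediant fact behind \cref{obsvarious}: we remove a piece of density $\frac{3}{2}$, which exceeds the overall density, so the remaining density drops.
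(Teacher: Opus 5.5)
Your proof is correct and is the same argument the paper leaves implicit (it states the observation without proof): the two non-cut vertices of $B$ have degree $2$, so removing them deletes $2$ vertices and $3$ edges, and $\frac{m-3}{n-2}<\frac{m}{n}$ is equivalent to $\frac{m}{n}<\frac{3}{2}$. You are also right that an isolated triangle is excluded by the paper's definition of a leaf block as a block with exactly one cut vertex, and this exclusion is needed, since for two disjoint triangles the claim would fail.
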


\begin{lemma}
\label{lem_bjibjir}
Let $\delta\in [1,\nicefrac{3}{2})$. 
    If $G$ is a cyclic connected graph minor-excluding all graphs in $\mathcal A$ and all graphs in $\Ecal_{\delta}$, then $1\le\frac{|E(G)|}{|V(G)|} \le \delta$. 
\end{lemma}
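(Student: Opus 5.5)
The lower bound $1\le |E(G)|/|V(G)|$ holds simply because $G$ is connected and cyclic. For the upper bound I would follow the scheme of \cref{djri_osw}: reduce to the simplified case, classify the block structure, and then replace the counting bound on the number of triangle blocks by the exclusion of $\Ecal_{\delta}$.

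\textbf{Reduction to simplified graphs.} First I simplify $G$, repeatedly contracting reducible edges. Contraction preserves the exclusion of every graph in $\Acal\cup\Ecal_{\delta}$. By \cref{dis_pen}, since the density is at least $1$, the density does not decrease, so it suffices to bound the simplified graph. The simplified graph stays connected. It also stays cyclic: an edge on a triangle has a common neighbour and is never reducible, and contracting an edge of a longer cycle leaves a cycle of length at least $3$. Moreover, a bridge has no common neighbour and is therefore reducible, so after simplification every block is $2$-connected. By \cref{thi_oplod}, each block is then a $2$-tree on $3$, $4$ or $5$ vertices. Exactly as in \cref{djri_osw}, excluding $\Acal_8$ and $\Acal_{10}$ leaves only four possibilities: all blocks are triangles, or there is one extra $4$-block, or one $5$-block, or two $4$-blocks. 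Thus $G\in\Ecal^i_b$ for some $i\in[4]$ and $b\ge b_i$, and its density is $\frac{3b+a_i}{2b+c_i}$ by \eqref{fig_density}.

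\textbf{Key step.} I will show $b<\beta_i(\delta)$, arguing by contradiction. Suppose $b\ge\beta_i(\delta)$. I claim that every graph in $\Ecal^i_b$ with $b>b_i$ has a minor in $\Ecal^i_{b-1}$; iterating gives a minor in $\Ecal^i_{\beta_i(\delta)}\subseteq\Ucal_{\delta}$. Every graph in $\Ucal_{\delta}$ contains some member of ${\sf min}(\Ucal_{\delta})=\Ecal_{\delta}$ as a minor, which contradicts the hypothesis. To prove the claim, I pick a triangle block $T$ of $G$.
\begin{itemize}
\item If $T$ is a leaf block, I delete its vertices other than the cut vertex.
\item Otherwise I contract two edges of $T$, merging its three vertices into one. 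This merges the cut vertices of $T$, so the remaining blocks keep their isomorphism types and still form a connected block tree.
\end{itemize}
In both cases the result lies in $\Ecal^i_{b-1}$. When $\beta_i(\delta)=b_i$ there is nothing to reduce, since $G$ itself already lies in $\Ucal_{\delta}$.

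\textbf{Conclusion.} The function $b\mapsto\frac{3b+a_i}{2b+c_i}$ is increasing, because $3c_i-2a_i>0$ in all four cases. By the definition of $\beta_i(\delta)$ as the least $b$ whose density exceeds $\delta$, every $b<\beta_i(\delta)$ has density at most $\delta$, which gives the upper bound.

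The step needing the most care is the reduction $\Ecal^i_b\to\Ecal^i_{b-1}$: one must check that contracting an internal triangle does not change the types of the neighbouring blocks. It does not, because blocks pairwise share at most one vertex. A second point to verify is that $\beta_i(\delta)$, as given by its closed formula, really is the threshold at which the density first exceeds $\delta$. This amounts to solving $\frac{3b+a_i}{2b+c_i}>\delta$ for $b$, using $3-2\delta>0$.
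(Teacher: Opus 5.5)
Your proof is correct and follows essentially the same route as the paper: simplify, use \cref{thi_oplod} together with the exclusion of $\Acal_8$ and $\Acal_{10}$ to place $G$ in some $\Ecal^i_b$, use the exclusion of $\Ecal_{\delta}$ to force $b<\beta_i(\delta)$, and conclude from the definition of $\beta_i(\delta)$. The differences are minor. You justify the step $b<\beta_i(\delta)$ through the explicit reduction $\Ecal^i_b\to\Ecal^i_{b-1}$, which the paper only asserts. You also use the monotonicity of $\frac{3b+a_i}{2b+c_i}$ in $b$ where the paper appeals to \cref{obs_stro23}.
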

\begin{proof}
    Note that $\frac{|E(G)|}{|V(G)|}\ge1$ since $G$ is cyclic. From \cref{dis_pen}, we may assume that $G$ is simplified.
Let $B$ be a block of $G$. As $G$ is simplified, $B$ is biconnected.
Furthermore, $B$ is some  $2$-tree on
$i$-vertices where $i\in\{3,4,5\}$, by \cref{thi_oplod}.  
We call these blocks $i$-blocks, $i\in[3,5]$.\\
Observe that $G$ (i) cannot have more than one $5$-block,
(ii) cannot have more than two $4$-blocks, and (iii) cannot have a $5$-block and a $4$-block. Indeed, if $G$ does not satisfy (i) or (iii) (resp. (ii)), then it contains some graph in $\Acal_8$ (resp. $\Acal_{10}$) as a minor.  This implies that $G \in \Ecal^i_b$, for some $b\in \mathbb N$ and for some $i \in [4]$.

As $G$  excludes all graphs in $\Ecal_{\delta}$, we have that $G \in \Ecal^i_b$ for some $i \in [4]$ and for some $b$ where $b_i \le b \le \beta_i(\delta)-1$ (such $i$ and $b$ exist since $b_1=1$ and $\beta_1(\delta) \ge 2$). Given that $\beta_1(\delta)-1\ge0$, we have 
\[
\frac{|E(G)|}{|V(G)|} \le \frac{3(\beta_i(\delta)-1)+a_i}{2(\beta_i(\delta)-1)+c_i}\le \delta 
\]
The first inequality follows from \cref{obs_stro23}
while the second one follows from the fact that  $\beta_i(\delta)$ is the smallest integer such that every graph in $\Ecal^i_{\beta_i(\delta)}$ has density strictly greater than $\delta$.
\end{proof}

We now proceed with the proof of the main result of this subsection. The proof follows along the same lines as \cref{theo_asdvre}.

\begin{theorem}
    For every limiting density  $\delta \in [1,\nicefrac{3}{2})$, it holds that $\mathsf{cobs}(\mathbb{C}_{\le\delta})=\mathsf{min}\big(\Abbb \cup\{\mathcal D,\mathcal M\}\cup\{\mathcal G^H \; | \; H \in \Ecal_{\delta}\}\big)$.
    \label{theo_vrighjivmn}
\end{theorem}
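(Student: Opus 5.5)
We must show two things: every class in the candidate list has limiting density $>\delta$, and every minor-closed class containing none of them has limiting density $\le\delta$. Together these give that $\mathsf{min}$ of the list is exactly $\mathsf{cobs}(\Cbbb_{\le\delta})$, since the minimal elements of a family of non-members whose upward closure contains all non-members are precisely the class obstructions.

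\emph{Lower bounds.} For $\Abbb\cup\{\Dcal,\Mcal\}$ this is \cref{lem_uokdhd}: these classes have densities $\nicefrac{3}{2}$ or $2$, both $>\delta$. For $H\in\Ecal_\delta\subseteq\Ucal_\delta$, $H\in\Ecal^i_{\beta_i(\delta)}$ for some $i$, so by the definition of $\beta_i$ and \eqref{fig_density} we have $|E(H)|/|V(H)|>\delta$. Then, as in the proof of \cref{hio8io3e}, the graphs $\lfloor n/|V(H)|\rfloor\cdot H$ padded with isolated vertices lie in $\Gcal^H$ and give $\delta(\Gcal^H)\ge |E(H)|/|V(H)|>\delta$.

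One point needs care: $\Ccal$ is not in the list. This is fine because $\Ccal\supseteq\Gcal^{H}$ for the chain-shaped member of $\Ecal^1_{\beta_1(\delta)}$, so $\Ccal$ is never minimal. This is also why, in the upper bound, excluding all of $\Gcal^H$ for $H\in\Ecal_\delta$ makes some fixed chain $\mathscr{C}_k$ excluded.

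\emph{Upper bound.} Suppose $\Gcal$ contains no class of the list. Since every member of $\Ucal_\delta$ contains a member of $\Ecal_\delta$ as a minor, there is a $k$ such that $\Gcal$ excludes $\mathscr{D}_k$, $\mathscr{M}_k$, $k\cdot A$ for all $A\in\Acal$, and $k\cdot H$ for all $H\in\Ecal_\delta$; it also excludes $\mathscr{C}_{k'}$ for some $k'$ by the remark above. I will follow the proof of \cref{theo_asdvre} verbatim, with $\Acal$ replaced by $\Acal':=\Acal\cup\Ecal_\delta$. This set is finite and contains a planar graph, so \cref{erpos} applies.
\begin{itemize}
\item Components of $G$ containing a graph of $\Acal'$ as a minor number at most $|\Acal'|(k-1)$.
\item Each such component $C$ has a bounded hitting set $S_C$, so that $C-S_C$ excludes $\Acal'$.
\item Poor components contribute $m^{\mathsf p}=n^{\mathsf p}$.
\item Edges at $S$ total at most a constant, by \cref{do_econnec_do}, \cref{cosxl} and Claim~\ref{cl_diagGrid}.
\end{itemize}
The one change is Claim~\ref{cl_asverga}. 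Each cyclic component of $Q$ now excludes $\Acal$ and $\Ecal_\delta$, so \cref{lem_bjibjir} applies (after simplifying via \cref{dis_pen}) and bounds its density by $\delta$. Tree components have density $<1\le\delta$. By \cref{obsvarious}, $m^{\mathsf r}\le\delta n^{\mathsf r}$. The final computation then reads
\[ m\le c+\delta n^{\mathsf r}+n^{\mathsf p}\le c+\delta n, \]
using $\delta\ge1$. Hence $\delta(\Gcal)\le\delta$.

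\emph{Main obstacle.} The delicate step is making the component bound airtight. \cref{lem_bjibjir} is applied after simplification, which contracts reducible edges. Since \cref{dis_pen} only increases density when the density is at least $1$, this is fine for cyclic components. We must also check that simplification preserves exclusion of $\Acal'$, which holds because it only takes minors. The remaining routine point is the minimality bookkeeping showing $\Ccal$ is absorbed.
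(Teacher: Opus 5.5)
Your proposal is correct and follows the paper's proof. The lower bounds come from \cref{lem_uokdhd} and from packings of $H\in\Ecal_\delta$, and the upper bound reruns the decomposition of \cref{theo_asdvre} with $\Acal\cup\Ecal_\delta$ in place of $\Acal$, replacing Claim~\ref{cl_asverga} by \cref{lem_bjibjir}. The exclusion of a chain $\mathscr{C}_{k'}$ that you derive is never actually used, since \cref{lem_bjibjir} does not need it, and this is exactly why $\Ccal$ can be absent from the list.
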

\begin{proof}
Let $\Gcal$ be a minor-closed graph class. If $\Gcal$ contains some $\Hcal \in \Abbb \cup\{\Dcal,\Mcal\}$ as a subset, then $\delta(\Gcal) \ge \delta(\Hcal)>\delta$ since $\delta(\Hcal) \in \{\frac{3}{2},2\}$, by~\cref{lem_uokdhd}. Now let \(H \in \Ecal_{\delta}\) (so $H \in \Ecal^i_{\beta_i(\delta)}$ for some $i \in [4]$) and assume that \(\mathcal G^H \subseteq\mathcal G\).
For every $n\in\mathbb N$, let $k:=\big\lfloor \frac{n}{(2\beta_i(\delta)+c_i)}\big\rfloor$ and define \(G_n\) to be the disjoint union of
$k\cdot H$ and $n\bmod(2\beta_i(\delta)+c_i)$ isolated vertices. Notice that
$G_n\in\mathcal G^{H}_n\subseteq\Gcal_n$. Moreover,
\[
|E(G_n)|=(3\beta_i(\delta)+a_i)\cdot\big\lfloor n/(2\beta_i(\delta)+c_i)\big\rfloor .
\]
Therefore,
\[
\frac{\operatorname{ex}(\mathcal G,n)}{n}
\;\ge\;
\frac{(3\beta_i(\delta)+a_i)\cdot\big\lfloor n/(2\beta_i(\delta)+c_i)\big\rfloor}{n},
\]
which implies that
\[
\delta(\mathcal G)
= \lim_{n\to\infty}\frac{\operatorname{ex}(\mathcal G,n)}{n}
\;\ge\;
\lim_{n\to\infty}\frac{(3\beta_i(\delta)+a_i)\cdot\big\lfloor n/(2\beta_i(\delta)+c_i)\big\rfloor}{n}
= \frac{3\beta_i(\delta)+a_i}{2\beta_i(\delta)+c_i}
> \delta .
\]
Thus, if $\mathcal G$ contains any of the classes in $\Abbb \cup\{\mathcal D,\mathcal M\}\cup\{\mathcal G^H \; | \; H \in \Ecal_{\delta}\}$ as a subset, then $\mathcal G \not\in \mathbb C_{\le\delta}$.

\medskip

Now let $\Gcal$ be a minor-closed graph class for which there exists $k$ such that every graph  $G\in\Gcal$ minor-excludes $\mathscr{D}_{k}$, $M_{k}$,
and also minor-excludes $\mathscr{P}^{A}_{k}$, for every $A\in \Acal\cup\Ecal_{\delta}$. 
Note that, for every graph $A\in\Acal\cup\Ecal_{\delta}$, every graph $G\in \Gcal$ 
has at most $k-1$ connected components 
containing  $A$ as a minor. 

Fix $G\in \Gcal$.
Let $\Ccal_{1}$ be the connected components of $G$
that contain some graph in $\Acal$ or in $\Ecal_{\delta}$ as a minor and let $\Ccal_{0}$
be those that minor-exclude all graphs in $\Acal$ and in $\Ecal_{\delta}$.
Let $\ell_\delta\coloneqq |\Acal\cup\Ecal_\delta|$.
By the previous observation, we have the following bound:
\begin{eqnarray}
    |\Ccal_{1}|& \leq  & (k-1)\cdot \ell_\delta\label{eq_obodr}
\end{eqnarray}

Let $C\in\Ccal_{1}$.
Observe that for every graph $A\in \Acal\cup\Ecal_{\delta}$ it holds that 
$C$ excludes  $k\cdot A$ as a minor. As $\Acal\cup\Ecal_{\delta}$ contains some planar graph, by \cref{erpos}, there is some $c_{2}$, depending on $\Acal\cup\Ecal_{\delta}$, and some 
set $S_C\subseteq V(C)$ where $|S_C|\leq c_{2}$
such that $J_C\coloneqq C-S_{C}$ minor-excludes all 
graphs in $\Acal\cup\Ecal_{\delta}$.

As we did in \cref{theo_asdvre}, 
We say that a connected component $Z$ of $J_{C}$ is \emph{$C$-poor} if it is a tree and $|N_{S_C}(V(Z))|=1$, i.e., 
there is only one edge between $S_C$ and the vertices of $Z$. We also say that $Z$ is \emph{$C$-rich} 
if it not poor. We denote by $\Qcal_C^{0}$  and $\Qcal_C^{1}$ the set of $C$-poor and $C$-rich connected components of $J_C$, respectively. From \cref{do_econnec_do}, we obtain that $|\Qcal_C^1|\leq  f_{\ref{do_econnec_do}}(c_{2},k)$.

We now partition the edges of $C$ as follows:
Let $E_{C}^{\mathsf{p}}$ be the edges of $C$ with at least one endpoint in a  $C$-poor component, let $E_{C}^{\mathsf{r}}$  be the edges  of the  rich components and let $E_{C}^{\mathsf{s}}$ be the edges  of $C$ 
with one endpoint in $S$ and the other in either $S$ or in some vertex of a $C$-rich component. Notice that $\{E_{C}^{\mathsf{p}},E_{C}^{\mathsf{r}},E_{C}^{\mathsf{s}}\}$ is  partition of $E(C)$.
We also define $m_{C}^{\mathsf{p}}=|E_{C}^{\mathsf{p}}|$, $m_{C}^{\mathsf{r}}=|E_{C}^{\mathsf{r}}|,$ and 
$m_{C}^{\mathsf{s}}=|E_{C}^{\mathsf{s}}|$. Also, we set 
$n^{\mathsf{s}}_{c}=|S_{C}|$ and let $n_{C}^{\mathsf{r}}$ (resp. $n_{C}^{\mathsf{p}}$) be   the number of vertices 
of the $C$-rich (resp. $C$-poor) components of $J_{C}$.

Observe that, given that there are $q$  $C$-poor components,
the edges in $E_{c}^{\mathsf{p}}$ are $n^{\mathsf{p}}_{C}-q$
plus  $q$ extra edges, one from each 
$C$-poor  component to a vertex of $S_{C}$. Therefore,
\begin{eqnarray}
m_{C}^{\mathsf{p}} & = & n_{C}^{\mathsf{p}}.\label{eq_qweqdxq}
\end{eqnarray}
\begin{claim}\label{cl_sverghsdvf}
 $|m_{C}^{\mathsf{s}}|\leq \binom{c_{2}}{2}+f_{\ref{do_econnec_do}}(c_{2},k)\cdot f_{\ref{cosxl}}(c_{2},k)$.
\end{claim}
The proof of this claim is identical to that of Claim~\ref{cl_diagGrid}
in the proof of \cref{theo_asdvre}, so we omit it.

Let $Q_{C}=\bigcup \Qcal_{C}^{1}$, i.e., $Q_{C}$ is the union of all $C$-rich components of $J_{C}$.  
Also, let $$Q=\bigcup \Ccal_{0}\cup\bigcup_{Z\in\Ccal_{1}}Q_Z$$
and observe that every connected  component of $Q$ excludes  every  graph in $\Acal \cup \Ecal_{\delta}$ as a minor. In case $Q$ is cyclic, we construct the graph $M$ from $Q$ as in the proof of  Claim~\ref{cl_asverga} in the proof of \cref{theo_asdvre}.
By \cref{lem_bjibjir} we have that $\frac{|E(M)|}{|V(M)|}\le\delta$. We conclude that $\frac{|E(Q)|}{|V(Q)|}\le\frac{|E(M)|}{|V(M)|}\le\delta$. In case $Q$ is acyclic we again have that $\frac{|E(Q)|}{|V(Q)|}\le1\le\delta$.

\medskip

We now define $n^\mathsf{s}=\sum_{C\in\Ccal_{1}}n_{C}^{\mathsf{s}}$,
$n^\mathsf{r}=\sum_{C\in \Ccal_{0}}|V(C)|+\sum_{C\in\Ccal_{1}}n_{C}^{\mathsf{r}}$, and 
$n^\mathsf{p}=\sum_{C\in\Ccal_{1}}n_{C}^{\mathsf{p}}$. From \eqref{eq_obodr}, it follows that   
\begin{eqnarray}
n^{\mathsf{s}} & \leq  & (k-1)\cdot \ell_\delta\cdot c_{2}. \label{eq_dwqfwec}
\end{eqnarray}

Similarly, we set 
$m^\mathsf{s}=\sum_{C\in\Ccal_{1}}m_{C}^{\mathsf{s}}$,
$m^\mathsf{r}=\sum_{C\in \Ccal_{0}}|E(C)|+\sum_{C\in\Ccal_{1}}m_{C}^{\mathsf{r}}$, and $m^\mathsf{p}=\sum_{C\in\Ccal_{1}}m_{C}^{\mathsf{p}}$.
Let $c\coloneqq k\cdot (\binom{c_{2}}{2}+f_{\ref{do_econnec_do}}(c_{2},k)\cdot f_{\ref{cosxl}}(c_{2},k))$.
Observe that from Claim~\ref{cl_sverghsdvf}, and from the fact that  $\frac{|E(Q)|}{|V(Q)|}\le\delta$ we obtain the following inequalities
\begin{eqnarray}
 m^s & \leq & c.\label{eq_fghkjfg}\\
 m^r&\leq & \delta n^{r}\label{eq_oivxoic}
\end{eqnarray}

\medskip

Fix $G\in\Gcal$ with $n$ vertices and $m$ edges.
Observe that
\begin{eqnarray*}
\frac{m}{n} & =& \frac{m^{\mathsf{s}}+m^{\mathsf{r}}+m^{\mathsf{p}}}{n^{\mathsf{s}}+n^{\mathsf{r}}+n^{\mathsf{p}}}\\
& =^{\eqref{eq_qweqdxq}} & 
\frac{m^{\mathsf{s}}+m^{\mathsf{r}}+n^{\mathsf{p}}}{n^{\mathsf{s}}+n^{\mathsf{r}}+n^{\mathsf{p}}}\\
&\leq ^{\eqref{eq_fghkjfg}}&
\frac{c+m^{\mathsf{r}}+n^{\mathsf{p}}}{n^{\mathsf{r}}+n^{\mathsf{p}}}\\
&=&
\frac{c+m^{\mathsf{r}}+n^{\mathsf{p}}}{n-n^{\mathsf{s}}}\\
&\leq ^\eqref{eq_dwqfwec}&
\frac{c+m^{\mathsf{r}}+n^{\mathsf{p}}}{n-(k-1)\cdot \ell_\delta \cdot c_{2}}\\
&\leq ^{\eqref{eq_oivxoic}}&
\frac{c+\delta n^{\mathsf{r}}+n^{\mathsf{p}}}{n-(k-1)\cdot \ell_\delta \cdot c_{2}}\\
&=&
\frac{c+\delta n^{\mathsf{r}}+n-n^{\mathsf{r}}-n^{\mathsf{s}}}{n-(k-1)\cdot \ell_\delta \cdot c_{2}}\\
&\leq &
\frac{c+(\delta -1)n+n}{n-(k-1)\cdot \ell_\delta \cdot c_{2}}\\
&=&
\frac{c+\delta n}{n-(k-1)\cdot \ell_\delta \cdot c_{2}}
\end{eqnarray*}

Therefore, $\delta(\Gcal)=\lim_{n\to\infty}\frac{\ex(\Gcal,n)}{n}\leq \lim_{n\to\infty}\frac{c+\delta n}{n-(k-1)\cdot \ell_\delta \cdot c_{2}}=\delta$.
\end{proof}

\section{Identifying second-order obstructions}
\label{sec_ident}

In the previous sections we gave complete characterizations of 
 $\Cbbb_{<\delta}$ with $\delta\in\{1,\nicefrac{3}{2}\}$ and $\Cbbb_{\leq \delta}$ with $\delta\in[0,\nicefrac{3}{2})$  in terms of parametric graphs and class obstructions. 
Since we require finite characterizations for algorithmic purposes, we proceed in this section with the identification of the second order obstructions.

\subsection{Second order obstructions}
We define $\Zcal_{1}=\{P_{4},K_{1,3}\}$, $\Zcal_{2}=\{K_3,2\cdot K_2\}$  
and $\Zcal_{3}=\{K_{1,3},K_{3}\}$. We also define 
$\mathcal{Z}_4,\mathcal{Z}_5,\mathcal{Z}_6$ to be the sets of graphs as shown in \cref{fig_obsCDM}.

\begin{figure}[!h]
\centering
\begin{tikzpicture}[scale=.66, every node/.style={scale=0.8}]
\begin{scope}
\draw[thick] (-0.5,2) rectangle (8.5,-2);
\draw[thick] (0,1) -- (0,0) -- (0,-1) (1,1) -- (1,0) -- (1,-1) (2.5,0) -- (3.5,0) -- (4.5,0) -- (5.5,0) (3.5,0) -- (4,-1) -- (4.5,0) (7,0) -- (8,0) -- (8,-1) -- (7,-1) -- cycle; 
\foreach \i/\j in {0/1, 0/0, 0/-1, 1/1, 1/0, 1/-1, 2.5/0, 3.5/0, 4.5/0, 5.5/0, 4/-1, 7/0, 8/0, 7/-1, 8/-1}
\draw[thick] (\i,\j) node[uStyle] {};
\draw[thick] (0.5,-1.5) node {\large $D_1$} (4,-1.5) node {\large $D_2$} (7.5,-1.5) node {\large $D_3$} (4,1.5) node {\large $\Zcal_4=\obs(\Dcal)$};
\end{scope} 
\begin{scope}[xshift=-0.5cm, yshift=-4.5cm]
\draw[thick] (-0.5,2) rectangle (9.5,-4);
\draw[thick] (0,0) -- (1,0) -- (2,0) -- (3,0) -- (4,0) (5.7,0) --++ (36:1) coordinate (a) --++ (-36:1) coordinate (b) --++ (252:1) coordinate (c) --++ (180:1) coordinate (d) --++ (108:1) coordinate (e) (0,-2) -- (0,-3) (1,-2) -- (1,-3) (2,-2) -- (2,-3) (3.5,-2) -- (4.5,-2) -- (4.5,-3) -- (3.5,-3) -- cycle (3.5,-2) -- (4.5,-3) (3.5,-3) -- (4.5,-2) (6,-2) -- (7,-2) -- (6.5,-3) -- cycle (8,-2) -- (8,-3); 
\foreach \i/\j in {0/0, 1/0, 2/0, 3/0, 4/0, 5.7/0, 0/-2, 0/-3, 1/-2, 1/-3, 2/-2, 2/-3, 3.5/-2, 4.5/-2, 4.5/-3, 3.5/-3, 6/-2, 7/-2, 6.5/-3, 8/-2, 8/-3}
\draw[thick] (\i,\j) node[uStyle] {};
\foreach \i in {a, b, c, d, e}
\draw[thick] (\i) node[uStyle] {};
\draw[thick] (2,-0.5) node {\large $M_1$} (8,0) node {\large $M_2$} (1,-3.5) node {\large $M_3$} (4,-3.5) node {\large $M_4$} (7.3,-3.5) node {\large $M_5$} (4.5,1.5) node {\large $\Zcal_5=\obs(\Mcal)$};
\end{scope}
\begin{scope}[xshift=-1.5cm, yshift=-12cm]
\draw[thick] (-0.5,3) rectangle (11.5,-1);
\draw[thick] (0,0) -- (1,0) -- (2,0) -- (3,0) -- (4,0) (2,0) -- (2,1) -- (2,2) (5.5,0) -- (6.5,0) -- (7.5,0) -- (8.5,0) (6.5,0) -- (7,1) -- (7.5,0) (7,1) -- (7,2) (10,0) -- (11,0) -- (11,1) -- (10,1) -- cycle;
\foreach \i/\j in {0/0, 1/0, 2/0, 3/0, 4/0, 2/1, 2/2, 5.5/0, 6.5/0, 7.5/0, 8.5/0, 7/1, 7/2, 10/0, 11/0, 10/1, 11/1}
\draw[thick] (\i,\j) node[uStyle] {};
\draw[thick] (2,-0.5) node {\large $C_1$} (7,-0.5) node {\large $C_2$} (10.5,-0.5) node {\large $C_3$} (5.5,2.5) node {\large $\Zcal_6=\obs(\Ccal)$};
\end{scope}
\end{tikzpicture}
\caption{The first-order obstruction sets for the classes $\Dcal,\Mcal,\Ccal$.}
\label{fig_obsCDM}
\end{figure}

The following are easy to verify. 

\begin{observation}
\label{h7ilq9pavx}
$\obs(\Gcal^{1})=\Zcal_{1}$, $\obs(\Gcal^{2})=\Zcal_{2}$, and $\obs(\Gcal^{3})=\Zcal_{3}$. 
\end{observation}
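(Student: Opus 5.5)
The plan is to verify the three equalities separately, each by the same three-step scheme. For a class $\Gcal^{i}$ and candidate set $\Zcal_{i}$ I will show: (a) no graph of $\Zcal_{i}$ belongs to $\Gcal^{i}$; (b) every graph obtained from a member of $\Zcal_{i}$ by a single vertex deletion, edge deletion or edge contraction belongs to $\Gcal^{i}$, which by minor-closedness gives minimality; (c) every graph excluding all members of $\Zcal_{i}$ as minors belongs to $\Gcal^{i}$. Steps (a) and (b) show $\Zcal_{i}\subseteq\obs(\Gcal^{i})$. Step (c) gives the reverse inclusion, since any obstruction must then contain some member of $\Zcal_{i}$ and, by minimality, equal it. In each case I first describe $\Gcal^{i}$ explicitly as a structural class, and then work with that description.

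For $\Gcal^{1}=\mathscr{P}^{K_3}\!\downarrow$, the description is: exactly the graphs all of whose components have at most $3$ vertices. Such a graph is a subgraph of $k\cdot K_3$ for large $k$, and conversely minors never enlarge components. Both $P_4$ and $K_{1,3}$ are connected on $4$ vertices, hence not in $\Gcal^{1}$. Any one-step reduction of either has all components of size at most $3$. For (c), suppose $G$ has a component $C$ with $|V(C)|\ge 4$. Take a connected $4$-vertex subgraph of $C$ and a spanning tree of it; this tree is $P_4$ or $K_{1,3}$, so $G$ contains a member of $\Zcal_1$ as a minor.

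For $\Gcal^{2}=\mathscr{S}\!\downarrow$, the description is: graphs with at most one nontrivial component, where that component is a star. Indeed, minors of $K_{1,k}$ are forests, and deleting or contracting edges of a star leaves a star plus isolated vertices. The graphs $K_3$ and $2\cdot K_2$ are not of this form, and all their one-step reductions ($P_3$, $K_2$, $K_2+K_1$, etc.) are. For (c), a $K_3$-minor-free graph is a forest. If it has two nontrivial components, or a nontrivial tree component that is not a star (such a component contains two disjoint edges, via a path on $4$ vertices), then $2\cdot K_2$ is a subgraph.

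For $\Gcal^{3}=\mathscr{P}\!\downarrow$, the description is: linear forests, i.e. disjoint unions of paths. Such a graph is a subgraph of a long path after joining its components in sequence, and minors of paths are clearly linear forests. The graphs $K_3$ and $K_{1,3}$ are not linear forests, and each one-step reduction of them is. For (c), excluding $K_3$ forces a forest, and excluding $K_{1,3}$ forces maximum degree at most $2$, so the graph is a linear forest.

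None of these steps is a genuine obstacle. The only point needing care is getting the structural descriptions exactly right, including isolated vertices. Note in particular that $\Gcal^{2}$ allows any number of isolated vertices but only one star, which is precisely why $2\cdot K_2$ rather than $2\cdot K_{1}$ appears in $\Zcal_{2}$.
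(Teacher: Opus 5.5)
Your proposal is correct and supplies the verification the paper has in mind: the paper gives no argument beyond calling these three identities easy to verify, and your structural descriptions of $\Gcal^{1}$ (all components of order at most $3$), $\Gcal^{2}$ (a single star plus isolated vertices) and $\Gcal^{3}$ (linear forests), together with the non-membership, one-step minimality and completeness checks, fill this in correctly. The one step you leave implicit is that every star plus isolated vertices is a subgraph of a large star, which the completeness step for $\Gcal^{2}$ needs; it is immediate.
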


We now translate the set of class obstructions of \cref{theo_abkvaabo}, that is, the classes in $\Abbb$ and the classes $\Dcal$, $\Ccal$, and $\Mcal$, into the set of second-order obstructions ${\sf OBS}(\Cbbb_{<\nicefrac{3}{2}})$. We start with the obstructions of $\Dcal$, $\Mcal$, and $\Ccal$.

\begin{lemma}\label{lem_vbfsdfg}
    $\mathsf{obs}(\Dcal) = \Zcal_4$.  
\end{lemma}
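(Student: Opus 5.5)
We would first give an explicit description of $\Dcal$ and then check the three graphs of $\Zcal_4$ against it. Reading \cref{fig_obsCDM}, $D_1=2\cdot P_3$, $D_2$ is the bull (a triangle $bce$ with pendant edges $ab$ and $cd$ at two distinct triangle vertices), and $D_3=C_4$. The claim to establish is
\[
\Dcal=\{G \mid \exists v\in V(G) \text{ such that } G-v \text{ has maximum degree at most } 1\}.
\]
For ``$\subseteq$'': deleting an edge or a vertex clearly preserves the property, since we keep the same $v$, or pick any vertex if $v$ itself was deleted. Contracting an edge $vu$, or an edge $uw$ of the matching $G-v$, also preserves it: in the first case the merged vertex plays the role of $v$, and in the second the merged vertex has at most the neighbour $v$ outside the apex. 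Since $\mathscr{D}_k$ has the property, so do all its minors. For ``$\supseteq$'': given $G$ with such a $v$, we join $v$ to every vertex, pad each isolated vertex of $G-v$ with a new partner, and obtain a supergraph of $G$ that is a subgraph of some $\mathscr{D}_k$.

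With this description, we show $\Zcal_4\cap\Dcal=\emptyset$ by inspection. In $2\cdot P_3$ and in $C_4$, deleting any vertex leaves a $P_3$. In the bull, deleting $b$ leaves the path $d c e$, deleting $c$ leaves $a b e$, deleting $e$ leaves the path $abcd$, and deleting a pendant vertex leaves the triangle. Minimality is then routine: every one-edge deletion or contraction of $D_1,D_2,D_3$ admits a suitable $v$. This also shows that $\Zcal_4$ is an antichain, because the three graphs have $6,5,4$ vertices and none of them is a minor of another.

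The main step is to show that every $G\notin\Dcal$ contains some $D_i$ as a minor. Here $G\notin\Dcal$ means that for every $v$, the graph $G-v$ contains a $P_3$ subgraph. If two vertex-disjoint $P_3$'s exist, we get $D_1$. If $G$ has a cycle of length at least $4$, we get $C_4$ as a minor. So we may assume that all $P_3$'s pairwise intersect and that every cycle is a triangle, so every block is an edge or a triangle. If $G$ is a forest, then the $P_3$'s are subtrees of a tree that pairwise intersect. They lie in one component, and by the Helly property for subtrees some vertex lies in all of them, a contradiction. Hence $G$ has a triangle $T=abc$, and pairwise intersection forces all edges of $G$ to lie in the component of $T$.

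This triangle case is the expected obstacle. Since $G-a$ contains a $P_3$, and $G-a$ restricted near $T$ gives only the edge $bc$ unless there are more edges, some further edge must exist, and similarly for $b$ and $c$. We would do a case analysis on the edges hanging off $T$. If two distinct vertices of $T$ carry extra neighbours (which are distinct, otherwise there is a $C_4$), we obtain the bull. If all extra edges hang off a single vertex, say $a$, then $G-a$ must still contain a $P_3$. This forces either a path of length $2$ away from $T$, which is disjoint from a $P_3$ inside $T\cup\{\text{pendant}\}$ and yields $D_1$, or a second triangle at $a$. In the second-triangle case we use the remaining hypothesis at $a$ to again find two disjoint $P_3$'s. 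Throughout, each sub-case should close quickly using the no-$C_4$ and pairwise-intersecting assumptions.
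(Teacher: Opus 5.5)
Your proof is correct, but it is organized differently from the paper's. You first identify $\Dcal$ intrinsically as the graphs having a vertex $v$ with $\Delta(G-v)\le 1$, i.e.\ a single vertex meeting every $P_3$. This makes non-membership and minimality of $D_1,D_2,D_3$ a finite check. It also turns the converse into a hitting problem for $P_3$'s, which you settle with disjoint $P_3$'s ($D_1$), long cycles ($C_4$), the Helly property for subtrees in the forest case, and a short triangle analysis.

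The paper never states this characterization. It proves only $\excl(\Zcal_4)\subseteq\Dcal$, leaving implicit that the $D_i$ are minimal non-members, and it does so by a structural description. Excluding $D_1$ leaves one component $H$ with at least three vertices. Excluding $C_4$ makes cyclic blocks triangles, and excluding the bull makes them leaf blocks. So $H$ is either a vertex $u$ carrying triangle leaf blocks and pendant paths of length at most $2$, or a tree of diameter at most $4$, and $H$ is then embedded in a daisy. This gives a concrete picture of the members. However, its tree case relies on an explicit enumeration that is incomplete as written: it lists only $K_{1,3},P_3,K_2,K_1$ for diameter at most $2$ and claims every such tree is a minor of $\mathscr{D}_3$. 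That misses, e.g., $K_{1,7}$ or a spider with four legs of length $2$. Your Helly argument handles all trees uniformly and avoids this.

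One tightening for your triangle case: the branch with a second triangle at $a$ is unnecessary, and such a triangle does not by itself produce a $P_3$ in $G-a$. Under pairwise intersection, any $P_3$ of $G-a$ must meet $T$, hence contain $b$ or $c$. That forces $b$ or $c$ to have a neighbour outside $T$, since otherwise $\{b,c\}$ spans an isolated edge of $G-a$. Applying this at $a$, $b$, and $c$ shows that at least two vertices of $T$ have outside neighbours. These neighbours are distinct, because a common one would give a $C_4$, so the bull appears and no other sub-case remains. Also, pairwise intersection places all $P_3$'s, not all edges, in the component of $T$; isolated edges elsewhere are harmless.
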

\begin{proof}
    Let $G$ be a graph that minor-excludes $D_1$, $D_2$, and $D_3$. Suppose $G$ has $r$ components. Since $G$ minor-excludes $D_{1}$, at most one component of $G$ has $\ge 3$ vertices, i.e.,  every other component is $K_1$ or $K_2$. If all components of $G$ are cliques of size at most two, then $G$ is a minor of $\mathscr{D}_r$. So, we may assume $H$ is the unique component of $G$ with $\ge 3$ vertices. It suffices to show that $H$ is a minor of $\mathscr{D}_{k}$ for some $k$ (so $G$ is a minor of $\mathscr{D}_{k+r-1}$).

    Since $G$ minor-excludes $D_3$, each block of $H$ that contains a cycle is, in fact, a triangle block. Moreover, each such block is a leaf block since $D_2$ is minor-excluded. So, all non-leaf blocks are bridges. 
    
    If $H$ contains a triangle block, then only one cut vertex of $H$ is incident to at least three blocks since $D_1$ is minor-excluded. So, $H$ consists of a cut vertex $u$ with $s\ge1$ triangle leaf blocks and $t\ge0$ blocks that are either paths of length 1 or 2 hanging off $u$. Indeed, any attached path of length at least 3 would give a $D_1$ as a minor, which is a contradiction. Now $H$ is a minor of $\mathscr{D}_{r+t}$.

    If instead $H$ is a tree, then $H$ has diameter (i) 4, (ii) 3, or (iii) $\le2$ since $D_1$ is minor-excluded. In the case of (i), $H$ is either $P_5$ or $P_5$ with a pendant path of length at most 2 incident to the middle vertex. In the case of (ii), $H$ is either $P_4$ or $P_4$ with a pendant edge incident to exactly one non-leaf vertex. Both (i) and (ii) follow from the fact that $D_1$ is minor-excluded. In the case of (iii), $H$ is either $K_{1,3}$, $P_3$, $K_2$, or $K_1$. In all cases, $H$ is a minor of $\mathscr{D}_3$.
\end{proof}

\begin{lemma}
    $\mathsf{obs}(\Mcal) = \Zcal_5$.
    \label{lem_vbfsdasdg}
\end{lemma}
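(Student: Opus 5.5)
The plan is to first give an intrinsic description of $\Mcal$ and then compute its obstructions from that description. Write $\tau(G)$ for the vertex cover number of $G$.

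\emph{Step 1: identify $\Mcal$.} I would show that $\Mcal$ is exactly the class of graphs $G$ with $\tau(G)\le 2$.
\begin{itemize}
\item In $\mathscr{M}_k$ the two apex vertices cover every edge. Vertex cover number does not increase under deletions or contractions: if $e=xy$ is contracted, replace $x$ or $y$ in a cover by the new vertex $v_{xy}$. Hence every graph in $\Mcal$ has $\tau\le 2$.
\item Conversely, let $\{a,b\}$ be a vertex cover of $G$ (padded with an extra vertex if needed). Every other vertex of $G$ is adjacent only to vertices in $\{a,b\}$. Hence $G$ is a subgraph of $\mathscr{M}_{|V(G)|}$, with $a,b$ mapped to the two apices and the remaining vertices to the independent set.
\end{itemize}
So $\Mcal=\{G\mid \tau(G)\le 2\}$, and $\obs(\Mcal)$ is the set of minor-minimal graphs with $\tau\ge 3$.

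\emph{Step 2: each graph in $\Zcal_5$ is an obstruction.} For each $M_i$ in \cref{fig_obsCDM} I check two things directly.
\begin{itemize}
\item $\tau(M_i)\ge 3$. For $3\cdot K_2$ this follows from the three disjoint edges. $K_3+K_2$ needs two vertices for the triangle and one for the edge. $K_4$ has $\tau=3$, and $C_5$ has $\tau=\lceil 5/2\rceil=3$. The tree $M_1$ is handled by exhibiting three pairwise disjoint edges in it, i.e. a matching of size $3$.
\item Deleting any edge or contracting any edge of $M_i$ gives a graph with a $2$-vertex cover. By symmetry there are only a handful of cases for each $M_i$.
\end{itemize}

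\emph{Step 3: no other obstructions.} Let $G$ be minor-minimal with $\tau(G)\ge 3$; I show $G$ contains some $M_i$ as a minor, so $G=M_i$. By minimality $G$ has no isolated vertices. If $G$ has three pairwise disjoint edges, then $3\cdot K_2\le G$. Otherwise the maximum matching has size at most $2$, and I split by connectivity.
\begin{itemize}
\item \emph{$G$ disconnected.} If some component has $\tau\ge 2$ and another has an edge, then the first component contains a cycle or two disjoint edges. In the cycle case, contracting the cycle to a triangle and taking an edge of the other component gives $K_3+K_2$. In the two-disjoint-edges case we already have $3\cdot K_2$.
\item \emph{$G$ connected with matching number $2$ and $\tau\ge 3$.} Here I use the structure of graphs with $\nu=2$, as in the Gallai--Edmonds / König–Egerváry setting. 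Since $\tau>\nu$, $G$ is not bipartite, so it has an odd cycle. A $5$-cycle gives $C_5$. If the shortest odd cycle is a triangle, then either three triangles through a common edge force $K_4$ after contraction, or a pendant structure off the triangle produces the remaining listed obstructions ($K_3+K_2$ or $M_1$).
\end{itemize}

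\emph{Main obstacle.} The hard part is the exhaustive case analysis in the connected case: every connected graph with $\nu\le 2$ and $\tau=3$ must contain one of $K_4$, $C_5$, or $M_1$. I would organise it around the blossom structure of a maximum matching, namely one odd blossom of size $3$ or $5$ plus extra vertices, and check that each configuration yields a listed minor. I would also re-verify $\tau(M_1)\ge 3$ against the figure, since a path on five vertices itself has $\tau=2$.
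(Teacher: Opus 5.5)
\textbf{The statement as written is false, and your Step~1 is what shows it.} The doubt you raise at the end is not a matter of re-reading the figure. The graph $M_1$ in \cref{fig_obsCDM} is the path on five vertices, $P_5$, and $P_5\in\Mcal$. Writing $P_5=v_1v_2v_3v_4v_5$, the set $\{v_2,v_4\}$ is a vertex cover. Concretely, $v_1\,a\,v_2\,b\,v_3$ is a subgraph of $\mathscr{M}_3$ with apices $a,b$. Hence $M_1$ is not an obstruction for $\Mcal$, and $\obs(\Mcal)\neq\Zcal_5$.

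In your plan this breaks two steps. Step~2 cannot be done for $M_1$, since a $5$-vertex graph has no matching of size $3$. The inference in Step~3, ``$G$ contains some $M_i$ as a minor, hence $G=M_i$'', needs $\tau(M_i)\ge 3$, so it also fails for $i=1$.

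The paper's proof misses this because it only establishes $\excl(\Zcal_5)\subseteq\Mcal$. It takes $G$ excluding $M_1,\dots,M_5$ and embeds it in some $\mathscr{M}_k$, using the exclusion of $P_5$ to bound tree diameters by $3$. That inclusion is true, but the paper never checks $M_i\notin\Mcal$, and the reverse inclusion $\Mcal\subseteq\excl(\Zcal_5)$ is false. The error propagates downstream. For the anti-chain $\{P_5,2\cdot K_4\}$ we have $\Zcal_5\le_*\{P_5,2\cdot K_4\}$, so the paper's test for $\Cbbb_{<\nicefrac{3}{2}}$ would report density at least $\nicefrac{3}{2}$. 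Yet $\excl(\{P_5,2\cdot K_4\})$ has limiting density $\nicefrac{5}{4}$.

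\textbf{What your method does prove.} It gives the corrected statement
$\obs(\Mcal)=\{3\cdot K_2,\;K_3\cup K_2,\;K_4,\;C_5\}=\Zcal_5\setminus\{M_1\}$,
and no blossom structure is needed for the hard case. The argument is as follows.
\begin{itemize}
\item Assume $\nu(G)\le 2$ and $\tau(G)\ge 3$. By K\"onig's theorem $G$ is not bipartite.
\item A cycle of length at least $5$ contracts to $C_5$. So assume $G$ has a triangle $xyz$.
\item No two of $x,y,z$ cover $E(G)$. So either some edge is disjoint from the triangle, which gives $K_3\cup K_2$, or each of $x,y,z$ has a neighbour outside the triangle.
\item If some outside vertex is adjacent to all three, you get $K_4$.
\item If some outside vertex $u$ is adjacent to exactly two of them, say $x,y$, then $z$ has an outside neighbour $w\neq u$. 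The triangle $xyu$ together with the edge $zw$ is then $K_3\cup K_2$.
\item Otherwise the chosen outside neighbours are pairwise distinct and form a matching of size $3$, contradicting $\nu(G)\le 2$.
\end{itemize}

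Consequently, the trichotomy in your last paragraph should read ``$K_4$, $C_5$ or $K_3\cup K_2$''. Connected graphs such as the bow tie, or the diamond with a pendant edge, have $\nu=2$ and $\tau=3$. They contain neither $K_4$ nor $C_5$, and reach an obstruction only through the disconnected graph $K_3\cup K_2$.

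The argument above does not use connectivity. So your separate disconnected case can be dropped; as written it also misses the subcase where every component has $\tau\le 1$. Minimality of the four graphs is the routine edge-deletion and contraction check you describe.
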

\begin{proof}

Let $G$ be any graph that minor-excludes $M_1$, $M_2$, $M_3$, $M_4$, and $M_5$. Suppose $G$ has $r$ components. Since $M_3$ is minor-excluded, at most two components of $G$ are nontrivial (that is, not $K_1$). If all components of $G$ are cliques of size one, then $G$ is a minor of $\mathscr{M}_r$. So, we may assume $H_1$ and $H_2$ are the (possible) unique nontrivial components of $G$. It suffices to show that $H_2\cup H_2$ is a minor of $\mathscr{M}_k$ for some $k$ (so $G$ is a minor of $\mathscr{M}_{k+r-2}$).  
 
If $H_1$ contains a cycle, then its length must be at most 4 since $M_2$ is minor-excluded. Moreover, since $M_5$ is minor-excluded, $H_2$ must be $K_1$ and only one block $B$ of $H_1$ must contain a cycle. Since $B$ is biconnected and both $M_4$ and $M_5$ are minor-excluded, it is easy to verify that $B$ is either $K_3$, $K_4$, or the unique 2-tree on 4 vertices. Furthermore, all blocks must be leaf blocks since $M_1$ is minor-excluded. Now $H_1\cup H_2$ is a minor of $\mathscr{M}_{k+2}$ where $k$ is the number of blocks of $H_1$. The same argument works by symmetry if $H_2$ contains a cycle.

If instead both $H_1$ and $H_2$ are trees, then both of their diameters must be at most 3 since $M_1$ is minor-excluded. We distinguish two cases: (i) $H_i$ has diameter 3 for some $i\in[2]$ and (ii) both $H_1$ and $H_2$ have diameter at most 2. In case (i), $H_{3-i}$ must be $K_1$ since $M_3$ is minor-excluded. In case (ii), both $H_1$ and $H_2$ must be either $K_{1,3}$, $P_3$, $K_2$, or $K_1$. In both cases, $H_1\cup H_2$ is a minor $\mathscr{M}_6$. 
\end{proof}

It is easy to prove the following (see  \cite[Lemma 3.23]{paul2023universal} for a proof).

\begin{proposition} 
    $\mathsf{obs}(\Ccal) = \Zcal_6$. \label{prop_btybbfv} 
\end{proposition}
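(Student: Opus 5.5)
We must prove that $\obs(\Ccal)=\Zcal_6=\{C_1,C_2,C_3\}$. From the figure, $C_1$ is the spider formed by three paths on two edges sharing a centre (the subdivided claw, $7$ vertices), $C_2$ is a triangle with a pendant path of length two at each of its three vertices, and $C_3$ is the $4$-cycle $C_4$. The proof has two parts: each $C_i$ lies outside $\Ccal$ while all its proper minors lie in $\Ccal$, and every graph excluding $C_1,C_2,C_3$ is a minor of some $\mathscr{C}_k$.

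\textbf{Membership part.} A minor of $\mathscr{C}_k$ is a subgraph of a contraction of $\mathscr{C}_k$. Contracting edges of a chain of triangles yields again a connected graph whose blocks are triangles or edges, arranged along a path. Hence every connected minor of $\mathscr{C}_k$ is an outerplanar cactus in which every cycle is a triangle, and so $C_4\notin\Ccal$.

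For $C_1$ and $C_2$ I use a ``path-likeness'' invariant. In $\mathscr{C}_k$ the block-cut tree is a path, up to the pendant degree-2 vertices of each triangle. A minor model of $C_1$ would need three disjoint branches of length two leaving one branch set. In a chain, any branch set meets at most two ``directions'' along the spine, plus the apex of a single triangle, and that apex is a dead end of length one. This is the step I expect to be fiddly. I would argue directly on branch sets: the centre's branch set is a connected subchain, and beyond its ends there are only two long directions.

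$C_2$ is handled the same way. The triangle of $C_2$ must be realised inside one triangle block, or across contracted material. Its three pendant $2$-paths then again need three long directions.

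Minimality is checked by exhibiting, for each single edge deletion or contraction of $C_i$, an embedding in a short chain. This is routine because $C_1,C_2,C_4$ are small.

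\textbf{Exclusion part.} Let $G$ exclude $C_1,C_2,C_3$. I do not yet need connectivity: $\Ccal$ is closed under disjoint union, because $\mathscr{C}_a\cup\mathscr{C}_b\le\mathscr{C}_{a+b+1}$ by deleting a middle triangle. So assume $G$ is connected.

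Since $C_4$ is excluded, every block of $G$ is an edge or a triangle. Consider the block-cut tree and contract every triangle to a single node, obtaining a tree $T$. Excluding $C_1$ means $T$ has no subdivided claw with legs of length two measured in blocks. I would then show that $G$ has a ``spine'': a longest path of blocks $P$ such that every block not on $P$ is within distance one of $P$ and hangs as a pendant edge or a pendant triangle. A block two steps away from $P$ would produce $C_1$, using the two ends of $P$ as the other legs. If instead a spine triangle has three branches of length two, we obtain $C_2$.

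Finally, embed $G$ into $\mathscr{C}_k$ block by block along the spine. A spine triangle is mapped to a chain triangle. A pendant edge at a cut vertex uses the free apex of an adjacent chain triangle, or an inserted extra triangle. A spine edge is obtained by contracting a triangle edge.

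\textbf{Main obstacle.} The main obstacle is the bookkeeping of how many pendants each spine vertex can carry. In $\mathscr{C}_k$ a cut vertex has degree $4$, so I would insert extra chain triangles between spine blocks, used only for their apices, to host additional pendant edges. I also have to verify that pendant triangles hang only where $C_2$ and $C_1$ allow.
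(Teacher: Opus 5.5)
\textbf{You have misidentified $C_2$, and this breaks both halves of the argument.} In the figure defining $\Zcal_6$, $C_2$ has six vertices. It is a triangle with a \emph{single pendant edge} at each of its three vertices (the net), not a triangle with pendant paths of length two. With your reading the statement is false. Your $9$-vertex graph contracts onto the net, so it is not minor-minimal outside $\Ccal$. Moreover, the net excludes $S_{2,2,2}$ (it has only six vertices), excludes $C_4$, and excludes your $9$-vertex graph, yet it is not a minor of any $\mathscr{C}_k$.

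The reason the net is not a chain minor is not captured by your ``three long directions'' heuristic. In a minor model of $K_3$ in $\mathscr{C}_k$, the three branch sets are joined by a cycle, which must be a triangle block $v_{i-1}v_ia_i$ with exactly one vertex in each branch set. Both neighbours of the degree-$2$ apex $a_i$ lie in the other two branch sets, so the branch set containing $a_i$ is exactly $\{a_i\}$. The corresponding triangle vertex can therefore receive no pendant neighbour.

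The same problem appears in your exclusion part. You only forbid spine triangles with three branches of length two. That still admits an interior spine triangle $u_{i-1}u_iw$ whose third vertex $w$ carries a pendant edge, which is exactly the net. Your embedding then has nowhere to put that edge, since $w$ must go to an apex of degree $2$. What excluding $C_2$ actually gives you is that the non-cut vertex of every interior spine triangle has degree $2$.

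A second imprecision: measuring the legs of a claw ``in blocks'' misstates what excluding $C_1$ gives. A single pendant triangle $uxy$ already supplies a leg $u\,x\,y$ of length two, so three pendant triangles at one vertex contain $C_1$.

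Both issues, and the bookkeeping you flag, go away if you take $P=p_1\cdots p_m$ to be a longest path of $G$ counted in vertices. With $C_4$ excluded, argue as follows.
\begin{enumerate}
\item A vertex off $P$ with two neighbours on $P$ either closes a cycle of length at least $4$ or can be inserted into $P$. So every off-path vertex has exactly one neighbour on $P$.
\item A vertex at distance two from $P$, or a pendant triangle at $p_j$, yields $C_1$ when $3\le j\le m-2$ and a longer path otherwise. So every vertex off $P$ is a leaf attached to $P$.
\item Hence every triangle lies on $P$, and by $C_4$-exclusion it has the form $p_jp_{j+1}p_{j+2}$.
\item Excluding the net, together with maximality of $P$ at its ends, forces each middle vertex $p_{j+1}$ to have degree $2$.
\end{enumerate}
Such a graph embeds directly into a chain. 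Map each triangle $p_jp_{j+1}p_{j+2}$ to a chain triangle with $p_{j+1}$ on the apex. Realise each leaf at $p_j$ as the apex of an extra chain triangle whose spine edge is contracted into the branch set of $p_j$. Your treatment of $C_1$, of $C_4$, and your reduction to connected graphs via disjoint unions are sound.
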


We next deal with the  obstructions for the classes in $\Abbb$.

\paragraph{Connectivizations.}
Given a minor-closed graph class $\Gcal$, we define 
$\Gcal^{(c)}$ as the class containing every graph whose connected components belong in $\Gcal$. 
Given a graph $G$, we
define $\mathsf{conn}(G)$
as the set containing 
every connected graph $G'$ that contains $G$ as a subgraph and has the minimum possible number of edges. {Observe that $\mathsf{conn}(G)=\{G\}$ if $G$ is connected.} We extend this definition for every set of graphs 
$\Zcal$ by setting $\mathsf{conn}(\Zcal)=\{\mathsf{conn}(Z)\mid Z\in\Zcal\}$.
Bulian and Dawar in \cite{BulianD17fixe} proved the following.

\begin{proposition} 
For every minor-closed class $\Gcal$ it holds that $\obs(\Gcal^{(c)})=\mathsf{min}(\mathsf{conn}(\obs(\Gcal)))$.
 \label{prop_avnuenvue}
\end{proposition}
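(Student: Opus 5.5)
The plan is to show that the complement of $\Gcal^{(c)}$ is exactly the upward minor-closure of $\mathsf{conn}(\obs(\Gcal))$. Here $\mathsf{conn}(\obs(\Gcal))$ is read as the union of the sets $\mathsf{conn}(Z)$ over $Z\in\obs(\Gcal)$. Taking minor-minimal elements on both sides then gives $\obs(\Gcal^{(c)})=\mathsf{min}(\mathsf{conn}(\obs(\Gcal)))$, since $\obs(\Gcal^{(c)})$ is by definition ${\sf min}(\gall\setminus\Gcal^{(c)})$.

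First I will note that $\Gcal^{(c)}$ is minor-closed. A minor of a graph splits into minors of its connected components, and these lie in $\Gcal$ because $\Gcal$ is minor-closed.

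Next I will describe $\mathsf{conn}(Z)$ explicitly for a graph $Z$ with $c$ connected components. Any connected supergraph of $Z$ has at least $|E(Z)|+c-1$ edges. This count is attained by adding $c-1$ new edges between the components of $Z$ so that they form a tree on the components. A supergraph with extra vertices cannot attain it: each extra vertex contributes at least one more edge than it saves. Hence $\mathsf{conn}(Z)$ consists exactly of the graphs $Z+F$ with $V(Z+F)=V(Z)$, where $F$ is a set of $c-1$ edges joining the components of $Z$ in a tree-like fashion.

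(1) If $H\le$-contains some $Z'\in\mathsf{conn}(Z)$ with $Z\in\obs(\Gcal)$, then $H\notin\Gcal^{(c)}$. Since $Z'$ is connected, any minor model of $Z'$ in $H$ lies inside a single component $C$ of $H$. Since $Z\subseteq Z'$, we get $Z\le C$, so $C\notin\Gcal$. In particular, every member of $\mathsf{conn}(\obs(\Gcal))$ lies outside $\Gcal^{(c)}$.

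(2) If $H\notin\Gcal^{(c)}$, then some component $C$ of $H$ is not in $\Gcal$, so some $Z\in\obs(\Gcal)$ has a minor model $\{X_v\}$ in $C$. Because $C$ is connected, every vertex of $C$ outside $\bigcup_v X_v$ can be absorbed into branch sets. Concretely, repeatedly add a vertex outside the union that is adjacent to some $X_v$ into that $X_v$, which keeps each $X_v$ connected. This yields a model in which the branch sets partition $V(C)$. Contracting every branch set gives a connected graph $C'$ on vertex set $V(Z)$ that contains $Z$ as a spanning subgraph. Since $C'$ is connected, its edges not in $Z$ connect the components of $Z$, so we can keep a subset $F$ of $c-1$ of them forming a tree over the components and delete the rest. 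The resulting graph $Z+F$ belongs to $\mathsf{conn}(Z)$ and is a minor of $C$, hence of $H$.

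Together, (1) and (2) show that $\gall\setminus\Gcal^{(c)}$ is exactly the set of graphs containing some member of $\mathsf{conn}(\obs(\Gcal))$ as a minor. By (1), the members themselves lie in this set, so its minor-minimal elements are precisely $\mathsf{min}(\mathsf{conn}(\obs(\Gcal)))$. The main obstacle is the absorption and contraction step in (2): I must make sure the final graph has exactly the edge count of a minimum connected supergraph, and not some larger connected supergraph of $Z$. This is handled by first forcing the branch sets to partition $V(C)$ and then pruning to a tree of connecting edges.
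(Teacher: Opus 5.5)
Your proof is correct. The paper gives no proof of this proposition and simply imports it from Bulian and Dawar \cite{BulianD17fixe}, so your argument is a self-contained alternative to a citation rather than a reconstruction of an in-paper proof.

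Your route has three steps.
\begin{enumerate}
\item Any minor model of a connected graph lies inside a single component. Hence everything above a member of $\mathsf{conn}(\obs(\Gcal))$ lies outside $\Gcal^{(c)}$.
\item Conversely, take a component of $H$ that is not in $\Gcal$. Absorbing the leftover vertices into branch sets and contracting gives a spanning connected supergraph of some $Z\in\obs(\Gcal)$. You then prune it to some $Z+F\in\mathsf{conn}(Z)$.
\item Together these show that $\gall\setminus\Gcal^{(c)}$ is the upward minor-closure of $\bigcup_{Z\in\obs(\Gcal)}\mathsf{conn}(Z)$. The minimal elements of an upward closure of $S$ are exactly $\mathsf{min}(S)$.
\end{enumerate}

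Compared with the bare citation, this buys two things. First, it makes explicit that $\mathsf{conn}(\obs(\Gcal))$ must be read as a union of the sets $\mathsf{conn}(Z)$; the paper's literal definition produces a set of sets. Second, your description of $\mathsf{conn}(Z)$ is the fact later used without proof in Claim~\ref{cl_vrgsdf}: $Z$ plus $c-1$ edges forming a tree over its components, with no new vertices. The citation only buys brevity.

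One point of rigor should be tightened. The phrase ``each extra vertex contributes at least one more edge than it saves'' should be replaced by a counting argument. Take a connected supergraph $G'$ of $Z$ with $t$ extra vertices and contract each component of $Z$ inside $G'$. The result is a connected multigraph on $c+t$ vertices, and all its non-loop edges come from edges of $G'$ not in $Z$. Hence at least $c+t-1$ edges were added.

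Only this lower bound is actually needed, namely in step (2), to certify $Z+F\in\mathsf{conn}(Z)$. Step (1) uses nothing beyond the fact that members of $\mathsf{conn}(Z)$ are connected supergraphs of $Z$.
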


Given a graph $G$, we define $\mathsf{minors}(G)$ to be the set of all minors of $G$; that is, $\mathsf{minors}(G) = \{G\}\!\downarrow$.

\begin{lemma}

For every two graphs $G_{1}$ and $G_{2}$ where $G_{1}\leq G_2$, it holds that if $\Gcal_{1}=\minors(G_{1})$ 
and $\Gcal_{2}=\minors(G_{2})$, then 
$\obs(\Gcal_{1}^{(c)})\leq_{*}\obs(\Gcal_{2}^{(c)})$.
\label{lem_avhehiecni}
\end{lemma}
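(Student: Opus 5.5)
The plan is to avoid computing either obstruction set and to argue through the inclusion of classes, using \eqref{eq_oksui}. That equivalence says that for minor-closed classes $\Gcal^{1},\Gcal^{2}$ we have $\Gcal^{1}\subseteq\Gcal^{2}$ if and only if $\obs(\Gcal^{1})\leq_{*}\obs(\Gcal^{2})$. So it suffices to prove $\Gcal_1^{(c)}\subseteq\Gcal_2^{(c)}$.

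The first step is to check that both $\Gcal_i^{(c)}$ are indeed minor-closed, so that \eqref{eq_oksui} applies. Each $\Gcal_i=\minors(G_i)$ is minor-closed by definition. Take a graph whose components all lie in $\Gcal_i$, and take a minor of it. That minor is a disjoint union of minors of the original components: every connected branch set lies inside a single component, so every component of the minor is a minor of one original component. Hence every component of the minor again lies in $\Gcal_i$, so $\Gcal_i^{(c)}$ is minor-closed.

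The second step is the inclusion itself. Since $G_1\leq G_2$ and the minor relation is transitive, $\minors(G_1)\subseteq\minors(G_2)$, that is, $\Gcal_1\subseteq\Gcal_2$. Now let $H\in\Gcal_1^{(c)}$. Every connected component of $H$ belongs to $\Gcal_1$, hence to $\Gcal_2$, and therefore $H\in\Gcal_2^{(c)}$. This gives $\Gcal_1^{(c)}\subseteq\Gcal_2^{(c)}$, and \eqref{eq_oksui} yields $\obs(\Gcal_1^{(c)})\leq_{*}\obs(\Gcal_2^{(c)})$.

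I expect no real obstacle. The only point needing care is the minor-closedness of $\Gcal^{(c)}$ in the first step, which is why it is stated explicitly.

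An alternative route would go through \cref{prop_avnuenvue}, comparing $\mathsf{min}(\mathsf{conn}(\obs(\Gcal_i)))$ directly. That route is messier, so the class-inclusion argument is the one I would write.
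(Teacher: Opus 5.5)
Your proof is correct and takes essentially the same route as the paper: from $G_1\leq G_2$ you get $\Gcal_1\subseteq\Gcal_2$, hence $\Gcal_1^{(c)}\subseteq\Gcal_2^{(c)}$, and \eqref{eq_oksui} gives the conclusion. Your explicit check that each $\Gcal_i^{(c)}$ is minor-closed is left implicit in the paper and is a worthwhile addition. In that check, the reason a whole component of the minor sits inside a single component of the original graph is that adjacent branch sets have connected union, not only that each branch set is connected.
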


\begin{proof}
Notice that $\Gcal_{1}\subseteq \Gcal_{2}$ which, by the definition of $\Gcal^{(c)}$, implies that $\Gcal_{1}^{(c)}\subseteq \Gcal_{2}^{(c)}$. By \eqref{eq_oksui} the lemma follows.
\end{proof}

For every connected graph $G$ on $n$ vertices, we define
\begin{eqnarray}
    \mathcal{Z}^{G} & \coloneqq  & \mathsf{min}(\mathsf{conn}(\mathsf{min}(\mathsf{minors}(K_{n+1})\backslash \mathsf{minors}(G)))).\label{eq_size_b}
\end{eqnarray}

\begin{lemma}

\label{lem_con_con}
    Let $G$ be a connected graph on $n$ vertices and $\Gcal = \mathsf{minors}(G)$. It holds that $\obs(\Gcal^{(c)}) = \mathcal{Z}^{G}$. 
\end{lemma}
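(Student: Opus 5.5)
By \cref{prop_avnuenvue}, $\obs(\Gcal^{(c)})=\mathsf{min}(\mathsf{conn}(\obs(\Gcal)))$. The whole argument therefore reduces to the single identity
\[
\obs(\mathsf{minors}(G))=\mathsf{min}\big(\mathsf{minors}(K_{n+1})\setminus \mathsf{minors}(G)\big),
\]
since substituting it into \cref{prop_avnuenvue} gives exactly the expression \eqref{eq_size_b} defining $\mathcal{Z}^{G}$. I would prove this identity by double inclusion.

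For ``$\supseteq$'', take $Z$ minor-minimal in $\mathsf{minors}(K_{n+1})\setminus\mathsf{minors}(G)$. Then $Z\notin \mathsf{minors}(G)$. Every proper minor $Z'$ of $Z$ is also a minor of $K_{n+1}$, so by minimality $Z'\in\mathsf{minors}(G)$. Hence $Z$ is a minor-minimal graph outside $\mathsf{minors}(G)$, that is, $Z\in\obs(\mathsf{minors}(G))$.

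For ``$\subseteq$'', take $Z\in\obs(\mathsf{minors}(G))$. The main step is to show $|V(Z)|\le n+1$, which gives $Z\leq K_{n+1}$. Suppose instead $|V(Z)|\ge n+2$, and pick any vertex $v$ of $Z$. Then $Z-v$ is a proper minor with at least $n+1$ vertices, so it is not a minor of $G$, contradicting that $Z$ is an obstruction. (If $Z$ has no vertices, $Z$ is the empty graph, which lies in $\mathsf{minors}(G)$, so this case cannot occur.) Now $Z\in \mathsf{minors}(K_{n+1})\setminus \mathsf{minors}(G)$. Every proper minor of $Z$ lies in $\mathsf{minors}(G)$, so none of them is in this difference, and therefore $Z$ is minimal there.

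With the identity established, applying $\mathsf{conn}$ and then $\mathsf{min}$ to both sides, together with \cref{prop_avnuenvue}, yields $\obs(\Gcal^{(c)})=\mathcal{Z}^G$. Connectedness of $G$ is not needed for the identity itself; it matters only for the interpretation of $\Gcal^{(c)}$ and is already covered by \cref{prop_avnuenvue}, which holds for every minor-closed class. The only nontrivial point is the vertex-count bound $|V(Z)|\le n+1$, and it follows directly from deleting a single vertex.
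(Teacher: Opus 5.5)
Your proposal is correct and follows essentially the same route as the paper. Both proofs identify $\obs(\mathsf{minors}(G))$ with $\mathsf{min}(\mathsf{minors}(K_{n+1})\setminus\mathsf{minors}(G))$ by deleting a single vertex to get $|V(Z)|\le n+1$, and then apply the Bulian--Dawar proposition. Your explicit double-inclusion argument spells out slightly more carefully what the paper's proof states briefly.
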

\begin{proof}
We claim that $\mathsf{obs(\mathcal{G})} =\mathsf{min}(\mathsf{minors}(K_{n+1})\setminus\mathsf{minors}(G))$. Every minor of 
$G$ has at most $n$ vertices, so any graph with at least $n+2$ vertices cannot be a minimal excluded minor (i.e. cannot be in $\obs(\Gcal)$): deleting one vertex keeps it outside $\mathsf{minors}(G)$. So minimal excluded minors have $\le n+1$ vertices. This bound is sharp since $n+1$ disjoint vertices is always an excluded minor for a graph on $n$ vertices. The graphs on $\le n+1$ vertices are exactly $\mathsf{minors}(K_{n+1})$. Therefore, the excluded minors for $\mathsf{minors}(G)$ are precisely the minimal elements of $\mathsf{minors}(K_{n+1})\setminus\mathsf{minors}(G)$ which proves the claim. Now by \cref{prop_avnuenvue} it follows that $\obs(\Gcal^{(c)}) = \mathcal{Z}^{G}$.
\end{proof}

From \cref{lem_avhehiecni} and \cref{lem_con_con} we have the following observation.

\begin{observation}
\label{obs_smith_trtansfer}
For every two graphs $G_{1},G_{2}$, 
if $G_{1}\leq G_{2}$,  then  $\Zcal^{G_{1}}\leq_{*}\Zcal^{G_{2}}$.
\end{observation}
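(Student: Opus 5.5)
The plan is to chain \cref{lem_avhehiecni} and \cref{lem_con_con}. Both of those results concern connected graphs: \cref{lem_con_con} is stated for a connected graph $G$, and $\Zcal^{G}$ is defined via \eqref{eq_size_b} only for connected $G$. So I read the observation as ranging over connected $G_1,G_2$, which is the setting in which it will be applied (the graphs in $\Acal$). If disconnected graphs had to be covered, I would first extend the definition of $\Zcal^{G}$ verbatim to arbitrary $G$. The proof of \cref{lem_con_con} never uses connectivity: the identification $\obs(\minors(G))=\mathsf{min}(\minors(K_{n+1})\setminus\minors(G))$ and \cref{prop_avnuenvue} both hold for any $G$. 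The same argument below then goes through unchanged.

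Fix $G_1\leq G_2$ and set $\Gcal_1=\minors(G_1)$ and $\Gcal_2=\minors(G_2)$. These are minor-closed classes, and $\Gcal_1\subseteq\Gcal_2$ by transitivity of the minor relation. Applying \cref{lem_avhehiecni} gives $\obs(\Gcal_1^{(c)})\leq_{*}\obs(\Gcal_2^{(c)})$. This rests only on the inclusion $\Gcal_1^{(c)}\subseteq\Gcal_2^{(c)}$, which is immediate from the definition of connectivization, together with \eqref{eq_oksui}.

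Next I would apply \cref{lem_con_con} to each $G_i$, with $n_i=|V(G_i)|$, to rewrite $\obs(\Gcal_i^{(c)})=\Zcal^{G_i}$. Substituting this into the Smyth relation from the previous step yields $\Zcal^{G_1}\leq_{*}\Zcal^{G_2}$.

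There is no real obstacle in this argument. The only point needing care is that $n_1$ and $n_2$ may differ, so $\Zcal^{G_1}$ and $\Zcal^{G_2}$ are computed with different ambient cliques $K_{n_1+1}$ and $K_{n_2+1}$. This is harmless, because each $\Zcal^{G_i}$ equals the intrinsic set $\obs(\Gcal_i^{(c)})$ regardless of which clique is used. A second minor check is that the classes involved are proper minor-closed classes, which holds since every $\Gcal_i^{(c)}$ excludes $K_{n_i+1}$.
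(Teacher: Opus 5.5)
Your proposal is correct and is exactly the paper's argument: the paper obtains the observation by applying \cref{lem_avhehiecni} to get $\obs(\Gcal_1^{(c)})\leq_{*}\obs(\Gcal_2^{(c)})$, and then rewriting both sides via \cref{lem_con_con} as $\Zcal^{G_1}$ and $\Zcal^{G_2}$. Your side remarks are also accurate: the differing ambient cliques $K_{n_1+1}$ and $K_{n_2+1}$ are harmless, and the proof of \cref{lem_con_con} never actually uses connectivity.
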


Let $\Lbbb$ be a finite set of minor anti-chains. 
We define ${\sf MIN}(\Lbbb)$
as the $\leq_{*}$-minimal 
 subset $\Lbbb'$ of $\Lbbb$, i.e., 
for every $\Zcal\in \Lbbb$ there is some $\Zcal'\in\Lbbb'$ such that $\Zcal'\leq_{*}\Zcal$.

\begin{theorem}

\label{th_kifinalop}
The following hold:
\begin{enumerate}[label=\textit{\roman*.}, ref=Theorem~\thetheorem.\roman*]
    \item\label{it_kolwe} For every $\delta\in[0,1)$, ${\sf OBS}(\Cbbb_{\leq \delta})={\sf MIN}\big(\{\Zcal_1,\Zcal_2\}\cup\{\Zcal^{G}\mid G\in\Tcal_{x+2}\}\big)$, where $x=\lfloor\frac{\delta}{1-\delta}\rfloor$ \label{theo_avjkrrv}. 
    \item\label{it_kolwes} For every $\delta\in[1,\nicefrac{3}{2})$, ${\sf OBS}(\Cbbb_{\leq \delta})={\sf MIN}\big(\{\Zcal_4,\Zcal_5\}\cup\{\Zcal^{G}\mid G\in \Acal \cup \Ecal_{\delta}\}\big)$\label{toipkdi0ol}. 
    \item\label{it_olopks} Moreover, ${\sf OBS}(\Cbbb_{<0})=\big\{\{P_2\}\big\}$, ${\sf OBS}(\Cbbb_{<1})=\{\Zcal_{1},\Zcal_{2},\Zcal_{3}\}$ and $\mathsf{OBS}(\Cbbb_{<\nicefrac{3}{2}}) = \{\Zcal_4,\Zcal_5,\Zcal_6\}\cup\{\Zcal^G\mid G \in \Acal\}$\label{lem_asver}.
\end{enumerate}
\end{theorem}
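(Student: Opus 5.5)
The proof translates the class obstructions of Section~\ref{xclsoosocnbsutuidison} into second-order obstructions. Since ${\sf OBS}(\Cbbb)=\{\obs(\Gcal)\mid \Gcal\in\cobs(\Cbbb)\}$, and $\Gcal\subseteq\Gcal'$ iff $\obs(\Gcal)\leq_*\obs(\Gcal')$ by \eqref{eq_oksui}, we have $\obs({\sf min}(\mathbb{K}))={\sf MIN}(\{\obs(\Gcal)\mid\Gcal\in\mathbb{K}\})$ for any finite family $\mathbb{K}$ of classes. So for each item it suffices to do two things: take the description of $\cobs$ from \cref{adsfssbfbscf}, \cref{theo_abkvaabo}, \cref{hio8io3e}, \cref{theo_vrighjivmn}, and the remark $\cobs(\Cbbb_{<0})=\{\langle k\cdot K_1\rangle\downarrow\}$, and then compute $\obs$ of every class appearing in these descriptions.

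The classes $\Gcal^1,\Gcal^2,\Gcal^3,\Dcal,\Mcal,\Ccal$ are handled by \cref{h7ilq9pavx}, \cref{lem_vbfsdfg}, \cref{lem_vbfsdasdg} and \cref{prop_btybbfv}. For $\Cbbb_{<0}$, the class of edgeless graphs is $\excl(\{P_2\})$. The remaining classes all have the form $\Gcal^H=\mathscr{P}^H\!\downarrow$ with $H$ connected: a tree in $\Tcal_{x+2}$, a graph in $\Acal$ (connected by definition), or a graph in $\Ecal_\delta$ (connected by definition of $\Ecal^i_b$).

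The key step is to show that $\Gcal^H=\minors(H)^{(c)}$ for connected $H$.
\begin{itemize}
\item $(\subseteq)$ A minor of $k\cdot H$ is a disjoint union of minors of copies of $H$. Since $H$ is connected, each component of such a minor lies inside a minor of a single copy of $H$, so every component belongs to $\minors(H)$.
\item $(\supseteq)$ If every component of $G$ is a minor of $H$, then $G$ is a minor of $r\cdot H$, where $r$ is the number of components of $G$.
\end{itemize}
With this identity, \cref{lem_con_con} gives $\obs(\Gcal^H)=\Zcal^H$.

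Substituting these values yields (i) and (ii) directly. In (ii) the class set is ${\sf min}(\Abbb\cup\{\Dcal,\Mcal\}\cup\{\Gcal^H\mid H\in\Ecal_\delta\})$, and every member of $\Abbb$ is $\Gcal^A$ for some $A\in\Acal$. The same substitution gives (iii), where no ${\sf MIN}$ is needed because \cref{adsfssbfbscf} and \cref{theo_abkvaabo} already state the sets as $\subseteq$-antichains, and distinct classes have distinct obstruction sets.

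The main obstacle is the justification that $\Gcal^H=\minors(H)^{(c)}$, specifically that each component of a minor of $k\cdot H$ is a minor of one copy of $H$. This holds because every branch set is connected and therefore lies within one copy; I expect it to need only a short argument.
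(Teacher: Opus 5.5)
Your proposal is correct and follows the paper's own proof: you read off $\cobs$ from the Section~3 theorems, compute $\obs$ of each class via the cited lemmas, and use the identity $\Gcal^H=\minors(H)^{(c)}$ together with \cref{lem_con_con} to get $\obs(\Gcal^H)=\Zcal^H$. You also justify two steps the paper leaves implicit, namely that identity and the translation $\obs({\sf min}(\cdot))={\sf MIN}(\obs(\cdot))$ via \eqref{eq_oksui}.
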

\begin{proof} We prove \emph{i.}, \emph{ii.}, and \emph{iii.}  as follows.\smallskip

    \noindent Proof of \emph{i.} From \cref{hio8io3e}, we have $\cobs(\Cbbb_{\leq \delta})={\sf min}\big(\{\Gcal^{1},\Gcal^{2}\}\cup \{\Gcal^{G}\mid G\in\Tcal_{x+2}\}\big)$ where $x=\lfloor\frac{\delta}{1-\delta}\rfloor$. So, we need to find $\mathsf{obs}(\Gcal)$ for every $\Gcal\in\cobs(\Cbbb_{\leq \delta})$. By \cref{h7ilq9pavx}, we have $\obs(\Gcal^{1})=\Zcal_{1}$, $\obs(\Gcal^{2})=\Zcal_{2}$. Now observe that for every graph $G$ if $\Gcal = \mathsf{minors}(G)$, then $\Gcal^G = \Gcal^{(c)}$ (where $\Gcal^G$ is defined as in \eqref{eq_nwriss}). Therefore, by \cref{lem_con_con}, we have $\obs(\Gcal^G) = \mathcal{Z}^{G}$ for any tree $G\in\Tcal_{x+2}$ where $\Gcal = \mathsf{minors}(G)$. As a result, ${\sf OBS}(\Cbbb_{\leq \delta})={\sf MIN}\big(\big\{\Zcal_1,\Zcal_2\big\}\cup\big\{\Zcal^{G}\mid  G\in\Tcal_{x+2}\big\}\big)$.\\
    
    \noindent Proof of \emph{ii.} From \cref{theo_vrighjivmn}, we have $\mathsf{cobs}(\mathbb{C}_{\le\delta})=\mathsf{min}\big(\Abbb \cup\{\mathcal D,\mathcal M\}\cup\{\mathcal G^H \; | \; H \in \Ecal_{\delta}\}\big)$. So, we need to find $\mathsf{obs}(\Gcal)$ for every $\Gcal\in\cobs(\Cbbb_{\leq \delta})$. By \cref{lem_vbfsdfg} and \cref{lem_vbfsdasdg}, we have $\obs(\mathcal{D})=\Zcal_{4}$, and $\obs(\mathcal{M})=\Zcal_{5}$, respectively. As in the proof of $i.$ above, $\Gcal^G = \Gcal^{(c)}$ for any graph $G$ where $\Gcal = \mathsf{minors}(G)$. So, for any graph $G\in\Acal \cup \Ecal_\delta$ where $\Gcal = \mathsf{minors}(G)$, we have $\obs(\Gcal^G) = \mathcal{Z}^{G}$, by \cref{lem_con_con}. As a result, ${\sf OBS}(\Cbbb_{\leq \delta})={\sf MIN}\big(\{\Zcal_4,\Zcal_5\}\cup\big\{\Zcal^{G}\mid G\in \Acal \cup \Ecal_{\delta}\big\}\big)$.\\ 

    \noindent Proof of \emph{iii.}
    The fact ${\sf OBS}(\Cbbb_{<0})=\big\{\{P_2\}\big\}$  follows directly from \cref{obs_nulnul}.
    From \cref{adsfssbfbscf} and \cref{theo_abkvaabo}, we have $\mathsf{cobs}(\mathbb{C}_{<1})=\{\Gcal^1, \Gcal^2, \Gcal^3\}$ and $\mathsf{cobs}(\mathbb{C}_{<\nicefrac{3}{2}})=\{\Dcal, \Ccal, \Mcal\} \cup \Abbb$, respectively. So, we need to find $\mathsf{obs}(\Gcal)$ for every $\Gcal\in\cobs(\Cbbb_{<1})$ and for every $\Gcal\in\cobs(\Cbbb_{<\nicefrac{3}{2}})$. By \cref{h7ilq9pavx}, we have $\obs(\Gcal^1)=\Zcal_1$, $\obs(\Gcal^2)=\Zcal_2$, and $\obs(\Gcal^3)=\Zcal_3$, respectively. Further, by \cref{lem_vbfsdfg}, \cref{prop_btybbfv}, and \cref{lem_vbfsdasdg}, we have $\obs(\mathcal{D})=\Zcal_{4}$, $\obs(\mathcal{C})=\Zcal_{6}$, and $\obs(\mathcal{M})=\Zcal_{5}$, respectively. Finally, for any graph $G\in\Acal$ where $\Gcal = \mathsf{minors}(G)$, we have $\obs(\Gcal^G) = \mathcal{Z}^{G}$, by \cref{lem_con_con} as above. As a result, ${\sf OBS}(\Cbbb_{<1})=\{\Zcal_1,\Zcal_2,\Zcal_3\}$ and ${\sf OBS}(\Cbbb_{<\nicefrac{3}{2}})=\{\Zcal_4,\Zcal_5, \Zcal_6\}\cup\big\{\Zcal^{G}\mid G\in \Acal\}$. 
\end{proof}
\medskip

\begin{figure}[!h]
\centering
\scalebox{.81}{
\centering
\begin{tikzpicture}
\draw (-1,-1) rectangle (4,-4.9);
\begin{scope}[yshift=-0.5cm]
\draw[thick] (-0.5,-2) -- (-0.5,-1) -- (0.5,-1) -- (0.5,-2); %P_4
\foreach \x/\y in {-0.5/-2, -0.5/-1, 0.5/-1, 0.5/-2}
\draw[thick] (\x,\y) node[uStyle] {};
\end{scope}
\begin{scope}[yshift=-0.5cm, xshift=-0.5cm]
\draw[thick] (2,-2) -- (3,-1) -- (3,-2) (3,-1) -- (4,-2); %K_{1,3}
\foreach \x/\y in {2/-2, 3/-2, 4/-2, 3/-1}
\draw[thick] (\x,\y) node[uStyle] {};
\end{scope}
\draw[thick] (1.5,-4) node {\large$\Zcal_{1}=\obs(\Gcal^1)$};
\draw (-1,-5.05) rectangle (4,-9);
\begin{scope}[yshift=-0.5cm, xshift=0.5cm]
\draw[thick] (0,-5) -- (-0.5,-6) -- (0.5,-6) -- cycle; %K_3
\foreach \x/\y in {0/-5, -0.5/-6, 0.5/-6}
\draw[thick] (\x,\y) node[uStyle] {};
\end{scope}
\begin{scope}[yshift=-0.5cm, xshift=-0.5cm]
\draw[thick] (2.5,-5) -- (2.5,-6) (3.5,-5) -- (3.5,-6); %2\cdot K_2
\foreach \x/\y in {2.5/-5, 2.5/-6, 3.5/-5, 3.5/-6}
\draw[thick] (\x,\y) node[uStyle] {};
\end{scope}
\draw[thick] (1.5,-8) node {\large$\Zcal_{2}=\obs(\Gcal^2)$};
\end{tikzpicture}
\begin{tikzpicture}
\draw (0,-1) rectangle (4,-9);
\begin{scope}[yshift=-0.5cm, xshift=2cm]
\draw[thick] (-0.5,-2) -- (0,-1) -- (0.5,-2) (-1.5,-2) -- (0,-1) -- (1.5,-2); %K_{1,4} for K_3 plus a pendant edge 
\foreach \x/\y in {-0.5/-2, 0/-1, 0.5/-2, -1.5/-2, 1.5/-2} 
\draw[thick] (\x,\y) node[uStyle] {};
\end{scope}
\begin{scope}[yshift=0.5cm, xshift=2cm]
\draw[thick] (-0.5,-5) -- (-0.5,-4) -- (0.5,-4) -- (0.5,-5) -- (0.5,-6); %P_5 for K_3 plus a pendant edge
\foreach \x/\y in {-0.5/-5, -0.5/-4, 0.5/-4, 0.5/-5, 0.5/-6} 
\draw[thick] (\x,\y) node[uStyle] {};
\end{scope}
\begin{scope}[yshift=-2.5cm, xshift=2cm]
\draw[thick] (-0.5,-5) -- (0,-4) -- (0.5,-5) -- cycle; %K_3 for K_3 plus a pendant edge
\foreach \x/\y in {-0.5/-5, 0.5/-5, 0/-4} 
\draw[thick] (\x,\y) node[uStyle] {};
\end{scope}
\draw[thick] (2,-8.5) node {\large$\obs(\Gcal^{K_{1,3}^{s}})$};
\end{tikzpicture}
\begin{tikzpicture}
\draw (0,-1) rectangle (7,-4.9);
\begin{scope}[yshift=-0.5cm, xshift=1cm]
\draw[thick] (-0.5,-2) -- (-0.5,-1) -- (0.5,-1) -- (0.5,-2); %P_4 for K_{1,4} 
\foreach \x/\y in {-0.5/-2, -0.5/-1, 0.5/-1, 0.5/-2}
\draw[thick] (\x,\y) node[uStyle] {};
\end{scope}
\begin{scope}[xshift=1cm, yshift=-2.5cm]
\draw[thick] (-0.5,-2) -- (0,-1) -- (0.5,-2) -- cycle; %K_3 for K_{1,4} 
\foreach \x/\y in {-0.5/-2, 0.5/-2, 0/-1}
\draw[thick] (\x,\y) node[uStyle] {};
\end{scope}
\begin{scope}[yshift=-0.5cm, xshift=0.5cm]
\draw[thick] (2,-2) -- (4,-1) -- (3,-2) (4,-2) -- (4,-1) -- (5,-2) (4,-1) -- (6,-2); %K_{1,5} for K_{1,4}
\foreach \x/\y in {2/-2, 3/-2, 4/-2, 5/-2, 6/-2, 4/-1}
\draw[thick] (\x,\y) node[uStyle] {};
\end{scope}
\draw[thick] (4.5,-4) node {\large$\obs(\Gcal^{K_{1,4}})$};
\draw (0,-5.05) rectangle (7,-9);
\begin{scope}[yshift=-0.5cm, xshift=2cm]
\draw[thick] (-1.5,-6) -- (-1.5,-5) -- (-0.5,-5) -- (0.5,-5) -- (1.5,-5) -- (1.5,-6); %P_6 for P_5
\foreach \x/\y in {-0.5/-5, 0.5/-5, -1.5/-5, 1.5/-5, -1.5/-6, 1.5/-6}
\draw[thick] (\x,\y) node[uStyle] {};
\end{scope}
\begin{scope}[xshift=2cm, yshift=-3.5cm]
\draw[thick] (-0.5,-5) -- (0,-4) -- (0.5,-5) -- cycle; %K_3 for P_5 
\foreach \x/\y in {-0.5/-5, 0.5/-5, 0/-4}
\draw[thick] (\x,\y) node[uStyle] {};
\end{scope}
\begin{scope}[yshift=-0.5cm, xshift=1.5cm]
\draw[thick] (3,-6) -- (4,-5) -- (5,-6) (4,-5) -- (4,-6); %K_{1,3} for P_5
\foreach \x/\y in {3/-6, 4/-5, 5/-6, 4/-6}
\draw[thick] (\x,\y) node[uStyle] {};
\end{scope}
\draw[thick] (4.5,-8) node {\large $\obs(\Gcal^{P_{5}})$};
\end{tikzpicture}
}
\caption{The second-order obstruction set for the class property $\Cbbb_{\leq \nicefrac{3}{4}}$.  We use $K_{1,3}^{s}$ to denote the tree obtained from $K_{1,3}$ if we subdivide one of its edges. In particular, 
${\sf OBS}(\Cbbb_{\leq \nicefrac{3}{4}})=\big\{\obs(\Gcal^{1}),\obs(\Gcal^2),\obs(\Gcal^{K_{1,3}^{s}}),\obs(\Gcal^{K_{1,4}}),\obs(\Gcal^{P_{5}})\big\}$ or, alternatively, ${\sf OBS}(\Cbbb_{\leq \nicefrac{3}{4}})=\big\{\{P_{4},K_{1,3}\}, \{K_3,2\cdot K_2\}, \{K_{1,4},P_{5},K_3\},\{P_{4},K_{1,5},K_3\},\{P_{6},K_{1,3},K_3\}\big\}.$ }
\label{fig_sioskl_pavk}
\end{figure}

We present some applications   of \ref{it_kolwe}. First of all 
it is easy to observe that 
$\cobs(\Cbbb_{\leq 0})$ contains  the class of all graphs with maximum degree 1 and $\Gcal^2$, therefore 
${\sf OBS}(\Cbbb_{\leq 0})=\big\{\{P_{3}\},\{K_3,2\cdot K_2\}\big\}$.
Similarly, $\cobs(\Cbbb_{\leq \nicefrac{1}{2}})$ contains  the class of the graphs whose components are paths of length at most two 
and $\Gcal^2$, therefore  ${\sf OBS}(\Cbbb_{\leq \nicefrac{1}{2}})=\big\{\{P_{4}, K_{1,3}, K_{3}\},\{K_3,2\cdot K_2\}\big\}.$
Notice that all classes defined so far are proper subsets of $\Gcal^{1}$,
therefore the minimization excludes them from the corresponding class obstructions
and also excludes $\Zcal_{1}$ from the corresponding second order obstructions.

The case of $\cobs(\Cbbb_{\leq \nicefrac{2}{3}})$ is slightly more complicated.
It contains, apart from $\Gcal^{1}$ and $\Gcal^{2}$, two more classes, one for each of the two 
trees on $4$ vertices: the first class contains all graphs 
whose components are minors of $P_4$ and the other contains all graphs whose components are minors of $K_{1,3}$. Using \ref{it_kolwe}, it is easy to verify that   
${\sf OBS}(\Cbbb_{\leq \nicefrac{2}{3}})=\big\{\Zcal_{1},\Zcal_{2},{\{P_{5}, K_{1,3}, K_{3}\},\{P_{4}, K_{1,4},K_{3}\}}\big\}$ (recall that  $\Zcal_{1}=\{P_{4},K_{1,3}\}$, $\Zcal_{2}=\{K_3,2\cdot K_2\}$).

Notice now that   $\cobs(\Cbbb_{\leq \nicefrac{3}{4}})$ contains, apart from $\Gcal^{1},\Gcal^{2}$, three more classes, one for each of the three 
trees on $5$ vertices. For each such tree $T$, the corresponding graph class 
is the one containing all graphs whose components are minors of $T$.
Each of these trees  generates an obstruction set according to \eqref{eq_size_b}.
Therefore, according to  \ref{it_kolwe}, there is a total of five obstructions in ${\sf OBS}(\Cbbb_{\leq \nicefrac{3}{4}})$, all of which are depicted in \cref{fig_sioskl_pavk}.\medskip

We next present the simplest possible application of \ref{it_kolwes}, that is, the case of $\Cbbb_{\leq 1}$. The set
${\sf OBS}(\Cbbb_{\leq 1})$  consists of  the minimization  of $\Zcal_4$ and $\Zcal_5$, depicted in \cref{fig_obsCDM}, the obstructions in  
 $\big\{\Zcal^{G}\mid G\in \Ecal_{\delta}\big\}$ and the obstructions in 
  $\big\{\Zcal^{G}\mid G\in \Acal \big\}$.
  Recall that $\mathcal  U_{\delta} = \bigcup_{i \in [4]}\Ecal^i_{\beta_i(\delta)}$ and $\Ecal_{\delta} = \mathsf{min}(\Ucal_{\delta})$. It is easy to verify that $\Ecal_{1}=\Ecal_{0}^{2}\cup \Ecal_{2}^{1}=\{D,B\}$ where $D$ is the \emph{diamond graph}, which is the unique 2-tree on 4 vertices, and $B$ is the \emph{bow tie graph}, which is $\mathscr{C}_2$. The obstructions corresponding to both graphs are generated using \eqref{eq_size_b} and are depicted in \cref{fig_obsCDM_more}.  
Observe now that $D$ and $B$ are minors of graphs in $\Acal$ (and in $\Ecal_{\beta_3(1)}^3$ and $\Ecal_{\beta_4(1)}^4$, which is why they are not part of $\Ecal_1$); therefore, no further obstructions are generated by $\big\{\Zcal^{G}\mid G\in \Acal \big\}$ because of \cref{obs_smith_trtansfer}.

We avoid the precise descriptions of the second order obstructions for limit-densities bigger than $1$ as they are too complex. 
We just mention that 
$|\cobs(\Cbbb_{\leq \nicefrac{6}{5}})|=5$,
$|\cobs(\Cbbb_{\leq \nicefrac{5}{4}})|=8$,
$|\cobs(\Cbbb_{\leq \nicefrac{9}{7}})|=10$.

\begin{figure}[h]
\centering
\scalebox{.81}{
\begin{tikzpicture}
\begin{scope}
\draw (-0.5,1.5) rectangle (9.5,-1.5);
\draw[thick] (0,-1) -- (0,0) -- (0,1) -- (1,1) -- (1,0) (2,-1) -- (2.5,0) -- (3,-1) (2.5,0) -- (2.5,1) -- (3.5,1) (4,-1) -- (5.5,0) -- (5,-1) (6,-1) -- (5.5,0) -- (7,-1) (8,0) -- (9,0) -- (9,-1) -- (8,-1) -- cycle (8,0) -- (9,-1) (8,-1) -- (9,0);
\foreach \x/\y in {0/-1, 0/0, 0/1, 1/1, 1/0, 2/-1, 2.5/0, 3/-1, 2.5/1, 3.5/1, 4/-1, 5.5/0, 5/-1, 6/-1, 7/-1, 8/0, 9/0, 9/-1, 8/-1}
\draw[thick] (\x,\y) node[uStyle] {};
\draw[thick] (6.5,1) node {\large $\obs(\Gcal^D)$};
\end{scope}
\begin{scope}[yshift=-3.2cm]
\draw (-2,1.5) rectangle (11,-3);
\draw[thick] (-0.5,0) -- (0,1) -- (0.5,0) (-1,0) -- (0,1) -- (1,0) (0,1) -- (0,0) (2,0) -- (3,0) -- (3,1) -- (2,1) -- cycle (4,0) -- (4,1) -- (5,1) -- (5,0) (5,1) -- (6,1) -- (6,0) (7,1) -- (8,1) -- (8.5,0) -- (9,1) -- (10,1) (8,1) -- (9,1); 
\draw[thick] (0.5,-1) -- (-0.5,-1) -- (-1.5,-1) -- (-1.5,-2) -- (-0.5,-2) -- (0.5,-2) (2.5,-1) -- (1.5,-1) -- (1.5,-2) -- (2.5,-2) -- (3.5,-1.5) (2.5,-2) -- (3.5,-2.5) (4.5,-1) -- (5.5,-1.5) -- (6.5,-1.5) -- (7.5,-1) (5.5,-1.5) -- (4.5,-2) (6.5,-1.5) -- (7.5,-2) (8.5,-1) -- (8.5,-2) -- (9.5,-2) -- (10.5,-2) (10.5,-1.5) -- (9.5,-2) -- (10.5,-2.5);
\foreach \x/\y in {-0.5/0, 0/1, 0.5/0, -1/0, 1/0, 0/0, 2/0, 3/0, 3/1, 2/1, 4/0, 4/1, 5/1, 5/0, 6/1, 6/0, 7/1, 8/1, 8.5/0, 9/1, 10/1, 0.5/-1, -0.5/-1, -1.5/-1, -1.5/-2, -0.5/-2, 0.5/-2, 2.5/-1, 1.5/-1, 1.5/-2, 2.5/-2, 3.5/-1.5, 3.5/-2.5, 4.5/-1, 5.5/-1.5, 6.5/-1.5, 7.5/-1, 4.5/-2, 7.5/-2, 8.5/-1, 8.5/-2, 9.5/-2, 10.5/-2, 10.5/-1.5, 10.5/-2.5}
\draw[thick] (\x,\y) node[uStyle] {};
\draw[thick] (6,-2.5) node {\large $\obs(\Gcal^B)$};
\end{scope}
\end{tikzpicture}
}
  \caption{The sets of graphs that, along with $\Zcal_4$ and $\Zcal_5$, depicted in {\cref{fig_obsCDM}}, form the second-order obstruction set for the class property $\Cbbb_{\leq 1}$. Here, $D$ (diamond) and $B$ (bow tie) are the unique graphs in $\Ecal^2_0$ and $\Ecal^1_2$, respectively.}
    \label{fig_obsCDM_more}
\end{figure}

\subsection{Upper bounds on obstruction sizes}

All our bounds on sizes of obstructions are counting unlabeled $K_{4}$-minor-free graphs that are known to be exponentially many.  For this, one may  
use the following asymptotic bound proved by Drmota, Fusy, Kang,  Kraus, and Rué in 
\cite{DrmotaFKKR11Asymptotic}.
 
\begin{proposition}
\label{prop_bountsp}
If $\Gcal=\excl(K_{4})$, then for every $n\in\Nbbb_{\ge1}$, $|\Gcal_{n}|\sim c\cdot n^{-3/2}·\rho^{n}$, where $\rho\approx9.38527$ and $c$  is a constant.
\end{proposition}

For every $\delta\in[0,\nicefrac{3}{2})$, we define $x_{\delta}$ as follows:
\begin{eqnarray}
x_{\delta}  &  = & 
\begin{cases}
\lfloor \frac{\delta}{1-\delta}\rfloor, & \text{when } \delta \in [0,1), \\[6pt]
\lfloor\frac{\delta}{3-2\delta}\rfloor, & \text{when } \delta \in \left[1,\nicefrac{3}{2}\right).
\end{cases}\label{label_delta_x}
\end{eqnarray}

\begin{lemma}
\label{cor_all}
Let $\delta$ be a limiting density in $[0,\nicefrac{3}{2})$. 
 Then the following holds:
    \begin{enumerate}[label=\textit{\roman*.}, ref=Theorem~\thetheorem.\roman*]
        \item For every $\Zcal \in {\sf OBS}(\Cbbb_{\leq \delta})$ and for every $G \in \Zcal$, \[|V(G)| \le 
\begin{cases}
\max\{x_{\delta} + 3,\, 4\}, & \text{when } \delta \in [0,1), \\[6pt]
\max\{2x_{\delta} + 3,7\}, & \text{when } \delta \in \left[1,\nicefrac{3}{2}\right).
\end{cases}
\]\label{theo_sdvsghsvj}
        \item For every $\Zcal \in {\sf OBS}(\Cbbb_{\leq \delta})$, $|\Zcal| = 2^{\Ocal(x_{\delta})}$.
          
        \item $|{\sf OBS}(\Cbbb_{\leq \delta})|=2^{\Ocal(x_{\delta})}$.
    \end{enumerate}    
    Also,  $|{\sf OBS}(\Cbbb_{<0})|=1$,  $|{\sf OBS}(\Cbbb_{<1})|=3$, and  $|{\sf OBS}(\Cbbb_{<\nicefrac{3}{2}})|={33}$. 
\end{lemma}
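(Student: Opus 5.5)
The plan is to read everything off \cref{th_kifinalop}, which already expresses ${\sf OBS}(\Cbbb_{\leq\delta})$ as a minimization of an explicit family of anti-chains. Every $\Zcal\in{\sf OBS}(\Cbbb_{\leq\delta})$ is therefore either one of the fixed sets $\Zcal_1,\Zcal_2,\Zcal_4,\Zcal_5$, or $\Zcal^{G}$ for some connected graph $G$. In the latter case $G\in\Tcal_{x+2}$ when $\delta<1$, and $G\in\Acal\cup\Ecal_\delta$ when $\delta\geq 1$. So the three items reduce to controlling $|V(G)|$ for these $G$, and then controlling the sizes of $\Zcal^G$ and of the index set.

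\emph{Item i.} The fixed sets contain graphs on at most $4$ vertices ($\Zcal_1,\Zcal_2$) or at most $7$ vertices ($\Zcal_4,\Zcal_5,\Zcal_6$ by \cref{fig_obsCDM}). This accounts for the constants $4$ and $7$ in the maxima.

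For $\Zcal^G$ with $|V(G)|=n$, the proof of \cref{lem_con_con} shows that each graph of $\min(\minors(K_{n+1})\setminus\minors(G))$ has at most $n+1$ vertices. Connectivization then only adds edges between components, never vertices (a minimum-edge connected supergraph on the same vertex set, which is how I read ${\sf conn}$). So every graph in $\Zcal^G$ has at most $n+1$ vertices.

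It remains to bound $n$.
\begin{itemize}
\item For $\delta<1$: $n=x_\delta+2$, giving $x_\delta+3$.
\item For $\delta\geq1$: $G\in\Acal$ gives $n\leq 10$, but $\Zcal^{G}$ for $G\in\Acal$ is only kept if minimal. If that is problematic I would simply state the bound as the max with the $\Acal$-sizes. Here I would check whether graphs in $\Acal$ are minors of graphs in $\Ecal_\delta$, so that \cref{obs_smith_trtansfer} discards them, or else accept the constant.
\item For $G\in\Ecal^i_{\beta_i(\delta)}$: $n=2\beta_i(\delta)+c_i$.
\end{itemize}
One must verify $2\beta_i+c_i\leq 2x_\delta+2$ up to the constants. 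For $i=1$, $\beta_1=1+x_\delta$, so $n=2x_\delta+3$. This suggests $\Zcal^G$ should contribute $n+1$, which exceeds the claim by one. So I expect to need the sharper fact that an obstruction of $\minors(G)$ with $n+1$ vertices is only the edgeless graph $(n+1)\cdot K_1$, whose connectivization is a tree on $n+1$ vertices that already contains a smaller obstruction such as $P_{n+1}$ or a star. This is the delicate step: arguing that minimal members of $\Zcal^G$ have at most $n$ vertices, plus at most one for the stated bound.

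For $i=2,3,4$, I compare $\frac{4\delta-5}{3-2\delta}$ etc. with $\frac{\delta}{3-2\delta}$. The numerators are smaller by $5-3\delta>0$, $7-4\delta>0$, $10-6\delta>0$ respectively, so $\beta_i\leq\beta_1$. The extra $c_i$ is offset by at least one less triangle, except in small cases absorbed into the constant $7$.

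\emph{Items ii and iii.} Every graph in $\Zcal$ has $\Ocal(x_\delta)$ vertices. Moreover, every member of $\Zcal^G$ or $\Ecal_\delta$ is either $K_4$-minor-free or small: obstructions are minimal non-minors of a $K_4$-minor-free $G$, and all but boundedly many are $K_4$-free, since $K_4$ itself is a non-minor. So by \cref{prop_bountsp} there are at most $2^{\Ocal(x_\delta)}$ candidate graphs, which bounds $|\Zcal|$. Similarly, $|{\sf OBS}|\leq 4+|\Acal|+|\Ecal_\delta|$ or $2+|\Tcal_{x+2}|$, both $2^{\Ocal(x_\delta)}$ by the same count.

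\emph{Final counts.} Item iii of \cref{th_kifinalop} lists $1$, $3$, and $3+30=33$ sets. One only needs that these are pairwise distinct, which follows since the classes in the corresponding $\cobs$ form $\subseteq$-anti-chains.
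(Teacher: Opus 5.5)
\textbf{There is a genuine gap, and it cannot be closed: item \emph{i} is false as stated for every limiting density with $x_\delta\geq 2$ (i.e.\ $\delta\geq\nicefrac{6}{5}$), and the paper's proof makes the off-by-one you suspected.}

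\emph{Why your sharpening fails.} You want to show that a tree on $n+1$ vertices coming from $\mathsf{conn}((n+1)\cdot K_1)$ always has a smaller member of $\mathsf{conn}(\obs(\mathsf{minors}(G)))$ below it. This is false. Note that $P_{n+1}$ is not smaller than such a tree, and stars survive the minimization. Take $G=\mathscr{D}_b$ with $n=2b+1$.
\begin{itemize}
\item Every member of $\mathsf{conn}(O)$ with $O\in\obs(\mathsf{minors}(G))$ contains $O$ as a subgraph, so it is not a minor of $G$.
\item A connected proper minor of $K_{1,n}$ cannot have $n+1$ vertices.
\item A proper minor of $K_{1,n}$ with at most $n$ vertices is a minor of $K_{1,n-1}$, which is a spanning subgraph of $\mathscr{D}_b$.
\end{itemize}
Hence no member of $\mathsf{conn}(\obs(\mathsf{minors}(G)))$ lies strictly below $K_{1,n}$, so $K_{1,n}\in\Zcal^{\mathscr{D}_b}$. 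Similarly $P_{n+1}\in\Zcal^{\mathscr{C}_b}$.

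\emph{A concrete counterexample.} Let $\delta=\nicefrac{6}{5}$. Then $x_\delta=2$, so the claimed bound is $7$.
\begin{itemize}
\item Here $\beta_1(\delta)=3$ and $\mathscr{D}_3\in\Ecal^1_{3}$. The class $\Gcal^{\mathscr{D}_3}$ has limiting density $\nicefrac{9}{7}>\delta$.
\item Every proper minor-closed subclass excludes some $k\cdot\mathscr{D}_3$. All other components are proper minors of $\mathscr{D}_3$, which have density at most $\nicefrac{6}{5}$. So such a subclass has limiting density at most $\delta$, and therefore $\Gcal^{\mathscr{D}_3}\in\cobs(\Cbbb_{\leq\delta})$.
\item $K_{1,7}\in\obs(\Gcal^{\mathscr{D}_3})$. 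It is connected on $8$ vertices, so it is not in the class. Deleting an edge or a vertex, or contracting an edge, gives $K_{1,6}+K_1$, $K_{1,6}$ or $7\cdot K_1$, all of which are in the class.
\end{itemize}
So this obstruction has $8>7$ vertices. In general, $K_{1,2x_\delta+3}\in\obs(\Gcal^{\mathscr{D}_{\beta_1(\delta)}})$ has $2x_\delta+4$ vertices.

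\emph{Where the paper slips.} From $|V(G)|\leq 2x_\delta+3$ for $G\in\Ecal_\delta$ (\eqref{eq_vbrvbeb}), the paper concludes $|V(H)|\leq 2x_\delta+3$ for $H\in\Zcal^G$ ``by \eqref{eq_size_b}''. But \eqref{eq_size_b} only gives $2x_\delta+4$. Your first computation ($n+1$) was the correct one. What your route, and the paper's, actually proves is the bound with $2x_\delta+4$ in place of $2x_\delta+3$.

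\emph{The classes $\Gcal^A$ for $A\in\Acal$.} Here you have the direction reversed. By \cref{obs_smith_trtansfer}, ${\sf MIN}$ discards $\Zcal^A$ when some $H\in\Ecal_\delta$ is a minor of $A$, since then $\Zcal^H\leq_*\Zcal^A$. It does not discard it when $A$ is a minor of a graph in $\Ecal_\delta$. Your fallback of ``accepting the constant'' would give $11$, not $7$. The paper handles this as follows:
\begin{itemize}
\item $2x_\delta+3\geq 11$ once $\delta\geq\nicefrac{4}{3}$, so nothing more is needed there.
\item It checks $\delta\in\{1,\nicefrac{6}{5},\nicefrac{5}{4},\nicefrac{9}{7}\}$ by hand. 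For the first two, the diamond lies in $\Ecal_\delta$ and is a minor of every graph in $\Acal$. For the last two, only $\Gcal^{K_4}$ survives among the classes of $\Abbb$, and its obstructions have at most $5$ vertices.
\end{itemize}

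\emph{What matches the paper.} Your comparison of $\beta_i$ with $\beta_1$ for $i=2,3,4$ mirrors \eqref{eq_verghef}. Items \emph{ii} and \emph{iii} follow the same route: every graph of $\Zcal^G$ is $K_4$ or $K_4$-minor-free, because connectivization only adds bridges, and then \cref{prop_bountsp} applies. The final counts are read off item \emph{iii} of \cref{th_kifinalop}.
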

\begin{proof}  We prove $i.$, $ii.$, and $iii.$ first.\medskip

 \noindent  \textsl{Proof of $i.$}  Let $\delta$ be a limiting density in $[0,1)$. We will compare the number of vertices of the graphs in $\Zcal_1,\Zcal_2$ and in $\Zcal^T$, where $T\in\Tcal_{x_{\delta}+2}$. Observe that every graph in  $\Tcal_{x_{\delta}+2}$ has $x_{\delta}+2$ vertices, so by \eqref{eq_size_b}, we have that every graph $G \in \Zcal^{T}$ where $T \in \Tcal_{x_{\delta}+2}$ has at most $x_{\delta}+3$ vertices. Also, observe that for every $G \in \Zcal_1 \cup \Zcal_2$, we have $|V(G)|\leq 4$. As a result, $|V(G)| \leq \max\{x_{\delta}+3,4\}$ for every $G \in \Zcal$ and $\Zcal \in {\sf OBS}(\Cbbb_{\leq \delta})$, by \ref{toipkdi0ol} (we take the maximum only for the case where $\delta = 0$ so $x_{\delta}+3 \geq  4$ fails). 
  
  Now let $\delta$ be a limiting density in $[1,\nicefrac{3}{2})$. We will now compare the number of vertices of the graphs in $\Zcal_4,\Zcal_5$ and in $\Zcal^G$, where $G\in\Acal\cup\Ecal_\delta$. Observe that $|V(G)|\le6\le\max\{2x_\delta+3,7\}$ for every $G\in\Zcal_4\cup\Zcal_5$, as desired.

  Recall that 
  \[
  \beta_1(\delta) = \lfloor \frac{\delta}{3-2\delta}\rfloor+1 , \beta_2(\delta)=\lfloor\frac{4\delta-5}{3-2\delta}\rfloor+1, \beta_3(\delta)= \max\{\lfloor\frac{5\delta-7}{3-2\delta}\rfloor+1,0\}, \beta_4(\delta) = \max\{\lfloor\frac{7\delta-10}{3-2\delta}\rfloor+1,0\}.
  \]
For every graph $H \in \mathcal{E}_\delta$, we claim that 
\begin{equation}
\label{eq_vbrvbeb}
|V(H)| \le 2 \lfloor \frac{\delta}{3-2\delta}\rfloor+3.
\end{equation}
Since we have that every graph in $\mathcal{E}_\delta$ is a graph from $\mathcal{E}^i_{\beta_i(\delta)}$, where $i \in [4]$, we just need to compare the value $$2 \lfloor \frac{\delta}{3-2\delta}\rfloor+3$$ which is the number of vertices of the graphs in the set $\mathcal{E}^1_{\beta_1(\delta)}$, with the values $$2(\lfloor\frac{4\delta-5}{3-2\delta}\rfloor+1)+4, 2(\max\{\lfloor\frac{5\delta-7}{3-2\delta}\rfloor+1,0\})+5,2(\max\{\lfloor\frac{7\delta-10}{3-2\delta}\rfloor+1,0\})+7,$$ which are the numbers of vertices of the graphs in the sets $\mathcal{E}^2_{\beta_2(\delta)},\mathcal{E}^3_{\beta_3(\delta)},$ and $\mathcal{E}^4_{\beta_4(\delta)}$, respectively. We claim that 
\begin{equation}
\label{eq_verghef}
    2 \lfloor \frac{\delta}{3-2\delta}\rfloor+3 \ge 2(\max\{\lfloor\frac{5\delta-7}{3-2\delta}\rfloor+1,0\})+5.
\end{equation} To prove \eqref{eq_verghef} we proceed as follows. For every limiting density $\delta\in[1,\nicefrac{3}{2})$, we have $\frac{\delta}{3-2\delta}-\frac{5\delta-7}{3-2\delta}
 = \frac{7-4\delta}{3-2\delta} \ge3$, and therefore \begin{eqnarray}\label{eq_efjhsdjgh}
     \lfloor\frac{\delta}{3-2\delta}\rfloor
\ge \lfloor\frac{5\delta-7}{3-2\delta}\rfloor+3.
 \end{eqnarray} 
 
We distinguish the following cases.

If $\lfloor\frac{5\delta-7}{3-2\delta}\rfloor+1\le0$, then $\delta < \frac{7}{5}$. This implies
$\max\{ \lfloor\frac{5\delta-7}{3-2\delta}\rfloor+1,0\}=0$ and the right–hand side of \eqref{eq_verghef} is $5$. 
Since $\delta\in [1,\nicefrac{3}{2})$, it follows that $\frac{\delta}{3-2\delta}\ge1$, therefore 
$2 \lfloor \frac{\delta}{3-2\delta}\rfloor+3 \ge 5$, so the inequality holds.

If $\lfloor\frac{5\delta-7}{3-2\delta}\rfloor+1>0$, then $\delta \ge \frac{7}{5}$. 
This implies
$\max\{\lfloor\frac{5\delta-7}{3-2\delta}\rfloor+1,0\}=\lfloor\frac{5\delta-7}{3-2\delta}\rfloor+1$ and the right–hand side of \eqref{eq_verghef} is $2\lfloor\frac{5\delta-7}{3-2\delta}\rfloor+7$. 
Now \eqref{eq_efjhsdjgh} implies
$2 \lfloor \frac{\delta}{3-2\delta}\rfloor+3\ge2(\lfloor\frac{5\delta-7}{3-2\delta}\rfloor+3)+3 > 2\lfloor\frac{4\delta-5}{3-2\delta}\rfloor+7.$ So, we have the desired inequality and \eqref{eq_verghef} holds.\medskip  

With exactly the same arguments, we show that $2 \lfloor \frac{\delta}{3-2\delta}\rfloor+3 \ge 2(\lfloor\frac{4\delta-5}{3-2\delta}\rfloor+1)+4$ and $2 \lfloor \frac{\delta}{3-2\delta}\rfloor+3 \ge 2(\max\{\lfloor\frac{7\delta-10}{3-2\delta}\rfloor+1,0\})+7$
hold for every limiting density $\delta\in[1,\nicefrac{3}{2})$, thus proving \eqref{eq_vbrvbeb} (for $\delta=1$, it still holds that $2 \lfloor \frac{\delta}{3-2\delta}\rfloor+3 \ge 2(\lfloor\frac{4\delta-5}{3-2\delta}\rfloor+1)+4$, but $2 \lfloor \frac{\delta}{3-2\delta}\rfloor+3 \ge 2(\max\{\lfloor\frac{7\delta-10}{3-2\delta}\rfloor+1,0\})+7$ fails, which is why we take $\max\{2x_\delta+3,7\}$). 
By \eqref{eq_vbrvbeb} and \eqref{eq_size_b}, we have that $|V(H)| \leq  2x_{\delta}+3$ for every graph $H \in \Zcal^G$, where $G \in \Ecal_{\delta}$.\medskip 

Observe that by \eqref{eq_size_b} and by the fact that $\Acal_4,\Acal_6,\Acal_8,$ and $\Acal_{10}$ consist of graphs with at most $4,6,8,$ and $10$ vertices, respectively, it follows that the graphs of $\Zcal^G$ for $G \in \Acal_i$, where $i \in \{4,6,8,10\}$, have at most $5,7,9,$ and $11$ vertices, respectively. 

Moreover, observe that for every $\delta \in [1,\nicefrac{3}{2})$ we have that $2x_{\delta}+3 \geq  5 $. As a result, we need to show what happens in the case $H \in \Zcal^G$, where $G \in \Acal_6\cup\Acal_8 \cup \Acal_{10}$. We observe that $2x_{\delta}+3 \geq  11$ for every $\delta \in [\nicefrac{4}{3},\nicefrac{3}{2})$ (resp. $2x_{\delta}+3 \geq  9$ for every $\delta \in [\nicefrac{9}{7},\nicefrac{3}{2})$ and $2x_{\delta}+3 \geq  7$ for every $\delta \in [\nicefrac{5}{4},\nicefrac{3}{2})$), so it remains to show what happens for the limiting densities $\delta \in \{\nicefrac{6}{5},\nicefrac{5}{4},\nicefrac{9}{7}\}$ (resp. $\delta \in \{\nicefrac{6}{5},\nicefrac{5}{4}$\} and $\delta \in \{ 1,\nicefrac{6}{5}$\}). 

For the densities $\delta \in\{1,\nicefrac{6}{5}\}$ we have that $\mathsf{cobs}(\mathbb{C}_{\le\delta})= \{\mathcal D,\mathcal M\}\cup\{\mathcal G^H \; | \; H \in \Ecal_{\delta}\}$; therefore, ${\sf OBS}(\Cbbb_{\leq \delta}) $ becomes $\{\Zcal_4,\Zcal_5\}\cup\big\{\Zcal^{G}\mid G \in \Ecal_{\delta}\big\}$. This is because the graph classes $\Gcal^H$ where $H \in  \Acal$ always contain some other graph class $\Gcal^H $ where $ H \in \Ecal_{\delta}$. Similarly, for the densities $\delta \in \{\nicefrac{5}{4},\nicefrac{9}{7}\}$, we have that $\mathsf{cobs}(\mathbb{C}_{\le\delta})= \Abbb_4 \cup \{\mathcal D,\mathcal M\}\cup\{\mathcal G^H \; | \; H \in \Ecal_{\delta}\}$; 
therefore, ${\sf OBS}(\Cbbb_{\leq \delta}) $ becomes $\{\Zcal_4,\Zcal_5\}\cup\big\{\Zcal^{G}\mid G \in \Ecal_{\delta}\cup\Acal_4\big\}$.  
By the analysis above and by \ref{theo_avjkrrv}, we have that $|V(G)| \leq 2x_\delta+3$ for every $G \in \Zcal$ and $\Zcal \in {\sf OBS}(\Cbbb_{\leq \delta})$.\\

 \noindent  \textsl{Proof of $ii.$} Let $\delta$ be a limiting density in $[0,1)$.\medskip

\noindent For $\Zcal \in \{\Zcal_1, \Zcal_2\}$ we have that $|\Zcal| = \Ocal(1)$ because they are finite classes and their sizes do not depend on $\delta$.\medskip

\noindent We claim that for every $G \in \Tcal_{x_\delta +2}$ with $\delta\geq \frac{1}{2}$ it holds that $K_{3}\in \Zcal^G$. To see this, observe that every such $G$ contains the unique tree on $3$ vertices as a minor. Furthermore, every proper minor of $K_3$ is a minor of $G$ while $K_3$ is not a minor of $G$. Finally, $\mathsf{conn}(K_3) = \{K_3\}.$ 
Therefore, $K_3 \in \Zcal^G$. For $\delta = 0$ we can easily observe that $\Tcal_{x_\delta +2} = \{K_2\}$  and that $\Zcal^{K_2} = \{\{P_3\}\}$. As a result, in both cases, every graph in $\Zcal^G$ is either $K_3$ or a $K_3$-minor excluding graph. Therefore, all graphs in $\Zcal^G$ are $K_4$-minor-free. 

\medskip

 \noindent Let $\delta$ be a limiting density in $[1,\nicefrac{3}{2})$.

\medskip

\noindent For $\Zcal \in \{\Zcal_4, \Zcal_5\}\cup\{\Zcal^G\mid G \in \Acal\}$ we have that $|\Zcal| = \Ocal(1)$ because they are finite classes and their sizes do not depend on $\delta$.\medskip

\begin{claim}\label{cl_vrgsdf}
 If $G \in \Ecal_\delta$, then every graph in $\Zcal^G$ is either $K_4$ or is $K_4$-minor-free.
\end{claim}

\begin{claimproof}
Since every graph in $\Ecal_\delta$ is some graph from the sets of graphs $\Ecal^i_{\beta_i(\delta)}$ for
$i \in [4]$, we prove the claim for each $\Ecal^i_{\beta_i(\delta)}$.
For this, we distinguish two cases.

\noindent \textsl{Case 1.} Let $G\in  \Ecal^i_{\beta_i(\delta)}$, where $i \in \{2,3,4\}$.
Observe that $G$ contains the diamond graph as a minor.  Moreover, every proper minor of $K_4$ is a minor of $G$ while $K_4$ is not a minor of $G$. Finally, $\mathsf{conn}(K_4) = \{K_4\}$. As a result, $K_4 \in \Zcal^G$, which implies every other graph in $\Zcal^G$ is $K_4$-minor-free.\\

\noindent \textsl{Case 2.} Let $G\in\Ecal^1_{\beta_1(\delta)}$ with $n$ vertices.
Since $G$ is a tree of triangles, it is $K_4$-minor-free, and hence every minor of $G$ is also $K_4$-minor-free. We define
\[
M = {\sf min}(\mathsf{minors}(K_{n+1}) \setminus \mathsf{minors}(G)).
\]
Recall that $\Zcal^G={\sf min}({\sf conn}(M))$, by \eqref{eq_size_b}. Let $H \in M$. We first show that every $H\in M$ is $K_4$-minor-free. 

Suppose, for a contradiction, that $H$ has $K_4$ as a minor. Let $H'$ be a minor of $H$ that is isomorphic to $K_4$. 
If $H'$ is a proper minor of $H$, then $H' \notin \mathrm{minors}(G)$ (since $G$ is $K_4$-minor-free), which contradicts the fact that every proper minor of $H$ lies in $\mathrm{minors}(G)$. If $H = K_4$, then $C_4$ is a proper minor of $H$ and $C_4$ is not a minor of $G$ (every cycle in a minor of $G$ has length 3), which is again a contradiction. Therefore, every $H \in M$ is $K_4$-minor-free. 

Now let $Z \in \mathsf{conn}(H)$. It sufficed to show that $Z$ is $K_4$-minor-free. Observe that $Z$ is obtained from $H$ by adding exactly $\mathsf{cc}(H)-1$ edges between different components, and these new edges are all bridges, by minimality. Moreover, any $3$-connected minor of a graph must be contained entirely in one of its blocks. Since $K_4$ is $3$-connected, in order for $K_4$ to be a minor of $Z$, it should be contained entirely in one of its blocks or, equivalently, in one of the connected components of $H$. This is not possible since $H$ is $K_4$-minor-free. Thus, every graph in $\Zcal^G$, where $G \in \Ecal^1_{\beta_1(\delta)}$, is $K_4$-minor-free.
\end{claimproof}
Given the above claim, the result follows from \cref{prop_bountsp} 
\medskip

\noindent  \textsl{Proof of $iii.$} 
We have that ${\sf OBS}(\Cbbb_{\leq \delta})={\sf MIN}\big(\{\Zcal_1,\Zcal_2\}\cup\{\Zcal^{G}\mid G\in\Tcal_{x+2}\}\big)$ for limiting densities $\delta \in [0,1)$ and ${\sf OBS}(\Cbbb_{\leq \delta})={\sf MIN}\big(\{\Zcal_4,\Zcal_5\}\cup\{\Zcal^{G}\mid G\in \Acal \cup \Ecal_{\delta}\}\big)$ for limiting densities $\delta \in [1,\nicefrac{3}{2})$. So, we just need to bound the number of graphs in $\Tcal_{x_\delta+2}$ and in $\Ecal_\delta$, respectively. We know that every graph $G \in \Tcal_{x_\delta+2}$ is $K_4$-minor-free since it is a tree and $|V(G)|\leq x_{\delta}+2$.  We also know that every graph $G \in \Ecal_\delta$ is $K_4$-minor-free, and $|V(G)|\leq 2x_{\delta}+3$ from part $i$. So, the result follows, by \cref{prop_bountsp}.
\medskip

\noindent By
\ref{it_olopks}, we have that $|{\sf OBS}(\Cbbb_{<0})|=1$, $|{\sf OBS}(\Cbbb_{<1})|=3$ and  $|{\sf OBS}(\Cbbb_{<\nicefrac{3}{2}})|=33$.
\end{proof}

 \section{Algorithmic consequences}
\label{alg_ons}

We now proceed with the algorithmic consequences of our results.

\subsection{Preliminary algorithmic results}
We need the following  algorithmic result that  follows from  the main result of
\cite{AdlerDFST11Faster}. 

\begin{proposition}
\label{prop_asdfvert}
There is an algorithm 
that, given a graph  $Z$  and a $y$-vertex graph $Y$, checks whether $Y$ is a minor of $Z$ in time $2^{\poly(k)}\cdot y^{\Ocal(k)}\cdot 2^{\Ocal(y^2)}\cdot |E(Z)|$ where $k=\tw(Z)$. 
\end{proposition}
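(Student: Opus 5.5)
The plan is to reduce the statement to the minor-containment algorithm of Adler, Dorn, Fomin, Sau and Thilikos~\cite{AdlerDFST11Faster}. We use it in the following form: given a graph $Z$ on $n$ vertices together with a tree decomposition of width $w$, and a graph $Y$ on $y$ vertices, it decides whether $Y\leq Z$ in time $2^{\Ocal(w\log w)}\cdot y^{2w}\cdot 2^{\Ocal(y^2)}\cdot n$. The only missing ingredient is the tree decomposition itself, since the statement takes $Z$ as input without one. The other small issue is that the running time should be linear in $|E(Z)|$ rather than in $|V(Z)|$.

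\emph{Obtaining a decomposition.} Let $k=\tw(Z)$. We first compute a tree decomposition of $Z$ of width $w=\Ocal(k)$ in time $2^{\Ocal(k)}\cdot |V(Z)|$, using the linear-time constant-factor approximation of Korhonen. Alternatively, Bodlaender's algorithm computes an optimal decomposition in time $2^{\Ocal(k^3)}\cdot |V(Z)|$. Either way $k$ is not known in advance, so we run the algorithm for $k'=1,2,\dots$ until it succeeds. This adds only a factor that is absorbed into $2^{\poly(k)}$.

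\emph{Running the minor test.} Feeding this decomposition to the algorithm of~\cite{AdlerDFST11Faster} gives running time
\[
2^{\Ocal(w\log w)}\cdot y^{2w}\cdot 2^{\Ocal(y^2)}\cdot |V(Z)|,
\]
which is $2^{\poly(k)}\cdot y^{\Ocal(k)}\cdot 2^{\Ocal(y^2)}\cdot |V(Z)|$ because $w=\Ocal(k)$.

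\emph{Replacing $|V(Z)|$ by $|E(Z)|$.} We preprocess as follows. Remove all isolated vertices from $Z$ and from $Y$; let $i_Z$ and $i_Y$ be their respective numbers, and let $Z'$ and $Y'$ be the resulting graphs. We claim that $Y\leq Z$ if and only if $Y'\leq Z'$ and $|V(Y)|\leq |V(Z)|$.
\begin{itemize}
\item For the forward direction, a minor model of $Y$ in $Z$ restricts to one of $Y'$. That restricted model uses no isolated vertices of $Z$, because every branch set of a vertex with a neighbour must touch an edge. So $Y'\leq Z'$, and trivially $|V(Y)|\leq|V(Z)|$.
\item For the converse, take a minimal model of $Y'$ in $Z'$, in which each branch set is the vertex set of a tree. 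The isolated vertices of $Y$ can then be mapped to any $i_Y$ distinct vertices of $Z$ outside this model.
\end{itemize}
The step needing care is that enough unused vertices must remain outside the model in the converse direction. If they do not, we fall back on trimming the model to a minimal one, since a minimal model has size bounded in terms of $y$ and the free vertices of $Z'$. If even that fails, we handle the few residual cases by brute force in $2^{\Ocal(y^2)}$ time. Since $Z'$ has no isolated vertices, $|V(Z')|\leq 2|E(Z)|$, and the bound in terms of $|E(Z)|$ follows.

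I expect the main obstacle to be bookkeeping rather than mathematics: matching the exact form of the running time in~\cite{AdlerDFST11Faster}, which is stated for a given decomposition and in terms of $n$, to the claimed expression. The treewidth-approximation step and the isolated-vertex normalisation are routine.
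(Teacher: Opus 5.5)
Your main strategy, computing a decomposition of width $\Ocal(k)$ and invoking the algorithm of \cite{AdlerDFST11Faster}, is what the paper does; it simply cites that result. The gap is in your detour from $|V(Z)|$ to $|E(Z)|$. The equivalence you claim, $Y\leq Z$ if and only if $Y'\leq Z'$ and $|V(Y)|\leq |V(Z)|$, is false.

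Take $Z=C_5$ and let $Y$ be $K_3$ together with two isolated vertices. Then $Y'=K_3\leq C_5=Z'$ and $|V(Y)|=5=|V(Z)|$. However, a minor of $Z$ with $|V(Z)|$ vertices must be a spanning subgraph of $Z$, and $C_5$ has no triangle, so $Y\not\leq Z$.

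Your fallback does not repair this. Minimal models are not bounded in size by a function of $y$: every $K_3$-model in $C_n$ uses all $n$ vertices, because deleting any vertex leaves a path. The ``brute force on the few residual cases'' has no justification. So, as written, your argument only yields a running time linear in $|V(Z)|$, not in $|E(Z)|$.

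The detour is in fact unnecessary. The main theorem of \cite{AdlerDFST11Faster} is stated for a host graph with $m$ edges and branchwidth $k$, with running time of the form $2^{\Ocal(k\log k)}\cdot h^{2k}\cdot 2^{\Ocal(h^2)}\cdot m$. It is therefore already linear in the number of edges, and branchwidth and treewidth differ by at most a constant factor. The $2^{\poly(k)}$ slack absorbs computing the decomposition.

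If you want to keep an isolated-vertex normalisation, for instance so that computing the decomposition also costs $\Ocal(|E(Z)|)$ rather than $\Ocal(|V(Z)|)$, the correct form is as follows. Non-isolated vertices of $Y$ never use isolated vertices of $Z$, and isolated vertices of $Y$ can be sent to isolated vertices of $Z$ first. Hence $Y\leq Z$ if and only if the disjoint union of $Y'$ with $\max\{0,i_Y-i_Z\}$ isolated vertices is a minor of $Z'$. This pattern has at most $y$ vertices, $\tw(Z')\leq k$, and $|V(Z')|\leq 2|E(Z)|$. You should also compute the tree decomposition of $Z'$, not of $Z$ as your ordering currently suggests.
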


The above proposition relies on the notion of \emph{treewidth}, denoted $\tw$, a graph parameter that measures how closely the structure of a graph resembles that of a tree.
We omit the formal definition, as it does not provide additional insight for the purposes of the proof.

\begin{lemma}\label{lem_vegdgdg}
There is an algorithm that, 
given a finite set $\Ycal$ of graphs containing at least one planar graph
and given a graph $Z$, outputs 
whether at least one of the graphs in $\Ycal$ is a minor of $Z$ in $2^{\poly(y)} \cdot |V(Z)|$ 
time, where all graphs in $\Ycal$ have size at most $y$.
\end{lemma}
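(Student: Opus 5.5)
The plan is the standard ``grid-minor / treewidth dichotomy'' argument, using \cref{prop_asdfvert} as the workhorse. Let $Y_0\in\Ycal$ be a planar graph with $y_0=|V(Y_0)|\le y$. By the Grid Exclusion Theorem (in its polynomial form, due to Chekuri and Chuzhoy), there is a polynomial $p$ such that every graph of treewidth at least $p(y_0)$ contains the $(2y_0\times 2y_0)$-grid as a minor. Every planar graph on $y_0$ vertices is in turn a minor of that grid. Hence if $\tw(Z)\ge p(y)$, then $Y_0\le Z$, and the answer is trivially \emph{yes}.

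The algorithm therefore runs in two steps. First, it decides whether $\tw(Z)< p(y)$, and if so computes a tree decomposition of width $\Ocal(p(y))$. This can be done with Bodlaender's linear-time algorithm, or with any $2^{\Ocal(k)}\cdot n$ constant-factor approximation such as Korhonen's, in time $2^{\poly(y)}\cdot|V(Z)|$. If the treewidth is at least $p(y)$ (or the approximation reports that it exceeds the threshold, after choosing the threshold constant appropriately), we output \emph{yes}. Otherwise $k=\tw(Z)=\poly(y)$. Since graphs of treewidth $k$ have at most $k\cdot|V(Z)|$ edges, we have $|E(Z)|\le \poly(y)\cdot|V(Z)|$. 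Second, for each $Y\in\Ycal$ we invoke \cref{prop_asdfvert}. Its running time $2^{\poly(k)}\cdot y^{\Ocal(k)}\cdot 2^{\Ocal(y^2)}\cdot|E(Z)|$ becomes $2^{\poly(y)}\cdot|V(Z)|$, because $y^{\Ocal(k)}=2^{\Ocal(k\log y)}=2^{\poly(y)}$.

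It remains to account for the size of $\Ycal$. Since every graph in $\Ycal$ has at most $y$ vertices, $|\Ycal|\le 2^{\Ocal(y^2)}$ up to isomorphism. Summing over $\Ycal$ therefore keeps the total at $2^{\poly(y)}\cdot|V(Z)|$.

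The main obstacle is the first step: we must ensure that the treewidth test and the grid-to-planar-minor conversion both cost only $2^{\poly(y)}$ overhead, with linear dependence on $|V(Z)|$. This requires the polynomial grid theorem, so that the treewidth bound is polynomial in $y$ and not exponential, together with a treewidth approximation that is linear in $n$. I would cite these known results rather than reprove them. Note that with the original exponential grid bound we would get only a tower-type dependence on $y$, which would break the claimed $2^{\poly(y)}$ bound.
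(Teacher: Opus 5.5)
Your proposal is correct and follows the same route as the paper. You use the polynomial grid theorem to force the planar member of $\Ycal$ as a minor once $\mathsf{tw}(Z)$ exceeds a polynomial in $y$, test that threshold with Bodlaender's linear-time algorithm, and otherwise run the bounded-treewidth minor test of \cref{prop_asdfvert} on each graph of $\Ycal$. You also make explicit two points the paper leaves implicit: $|E(Z)|\le \mathsf{tw}(Z)\cdot|V(Z)|$, so the $|E(Z)|$ factor is $\mathsf{poly}(y)\cdot|V(Z)|$, and $|\Ycal|\le 2^{\Ocal(y^2)}$ up to isomorphism.
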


\begin{proof}
Let $Y$ be the planar graph in $\Ycal$ where we know that $|V(Y)|\leq y$.
According to \cite{chuzhoy2021towards}, there is a polynomial function $g: \Nbbb \to \Nbbb$ 
such that if  $\tw(Z)\geq g(y)$, then $Z$ contains $Y$ as a minor.
Furthermore, \cite{Bodlaender96ALinear}
  provides an algorithm such that, 
given a graph $Z$ and an integer $k$, 
either reports that  $\tw(Z)\leq k$ or that $\tw(Z)>k$ running in $2^{\poly(k)}\cdot |V(Z)|$.
Combining these two results, we can either 
report that $Y$
is a minor of $Z$ or that $\tw(Z)\leq g(y)$
in 
$2^{\poly(y)} \cdot |V(Z)|$
time.
In the first case,  our  algorithm gives a positive answer. If not,  
we use the algorithm of \cref{prop_asdfvert}
and check, for every $Y\in \Ycal$
whether $Y\leq Z$ in 
$2^{\poly(y)} \cdot |V(Z)|$
time.
\end{proof}

Recall that $n(\Zcal) = \sum_{Z \in \Zcal}|V(Z)|$ for every finite set of graphs $\Zcal$.
The next corollary follows directly from \cref{lem_vegdgdg}. 

\begin{corollary}
    \label{cor_asdghf}
    One can construct an algorithm that with input 
    \begin{itemize}
        \item a set of sets of graphs  $\mathbb{Y}=\{\Ycal_1,\ldots,\Ycal_r\}$ where for every $i \in [r]$, \begin{itemize}
            \item $|\mathcal{Y}_i| \leq  q$ and
            \item $\Ycal_i$ contains a planar graph   and every graph in $\Ycal_i$ has at most $y$ vertices, and
        \end{itemize} 
        \item a set of graphs $\Zcal$, 
    \end{itemize}
    outputs whether there is an $\Ycal \in \mathbb{Y}$ such that $\Ycal \le_{*} \Zcal$ in time $\Ocal(r \cdot q \cdot 2^{\poly(y)}\cdot n)$, where $n = n(\Zcal)$.
\end{corollary}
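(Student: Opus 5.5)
The algorithm simply loops over the sets $\Ycal_1,\ldots,\Ycal_r$ and, for each of them, checks the definition of $\Ycal_i\le_*\Zcal$ directly. That definition asks that \emph{every} $Z\in\Zcal$ contain some graph of $\Ycal_i$ as a minor. The algorithm answers ``yes'' exactly when some $i\in[r]$ passes this test, so correctness is immediate from the definition of the Smyth extension $\leq_*$.

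For a fixed $i$ and a fixed $Z\in\Zcal$, the test ``is some member of $\Ycal_i$ a minor of $Z$'' is precisely what \cref{lem_vegdgdg} handles. Its hypotheses hold: $\Ycal_i$ contains a planar graph, and all graphs in $\Ycal_i$ have at most $y$ vertices. The call therefore costs $2^{\poly(y)}\cdot|V(Z)|$. Conservatively, we may view this call as up to $|\Ycal_i|\le q$ invocations of \cref{prop_asdfvert} performed after the treewidth test. Bounding the work per graph of $\Ycal_i$ by $2^{\poly(y)}\cdot|V(Z)|$ then bounds the call by $q\cdot 2^{\poly(y)}\cdot|V(Z)|$. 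This is where the factor $q$ in the statement comes from.

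Summing over $Z\in\Zcal$ gives $q\cdot 2^{\poly(y)}\cdot\sum_{Z\in\Zcal}|V(Z)|=q\cdot2^{\poly(y)}\cdot n(\Zcal)$ for each $i$. Summing over the $r$ indices gives the claimed $\Ocal(r\cdot q\cdot 2^{\poly(y)}\cdot n)$.

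The only point needing care is that the per-call bound of \cref{lem_vegdgdg} is linear in $|V(Z)|$ and not in $|E(Z)|$. This holds because, in the bounded-treewidth branch, $|E(Z)|\le \tw(Z)\cdot|V(Z)|\le g(y)\cdot|V(Z)|$, and the factor $g(y)$ is absorbed into $2^{\poly(y)}$. Everything else is routine bookkeeping.
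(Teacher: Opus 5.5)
Your proposal is correct and follows the paper's route. The paper states that the corollary follows directly from \cref{lem_vegdgdg}: for each $\Ycal_i$ and each $Z\in\Zcal$, test whether some member of $\Ycal_i$ is a minor of $Z$, then sum the costs, which are linear in $|V(Z)|$, to obtain a total linear in $n(\Zcal)$. Your extra remarks on where the factor $q$ comes from and on the bound $|E(Z)|\le \tw(Z)\cdot|V(Z)|$ in the bounded-treewidth branch correctly fill in bookkeeping that the paper leaves implicit.
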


We also need the following restatement of \eqref{pposg99fdhye}. 
 
\begin{corollary}
    \label{klk89sghds3f2w}
    Let $\Gcal$ be a minor-closed graph class and let $\Cbbb$ be  some class property. Now
    $\Gcal\in \Cbbb$ iff for every $\Qcal\in{\sf OBS}(\Cbbb)$ it holds that $\Qcal\nleq_{*} \obs(\Gcal)$. 
\end{corollary}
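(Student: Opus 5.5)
This is a direct unfolding of the definitions; the only real content is the equivalence \eqref{pposg99fdhye}, which we reprove carefully from \eqref{ppog99fdhye}.

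Fix $\Gcal$ and set $\Zcal=\obs(\Gcal)$. By the Graph Minors Theorem $\Zcal$ is a finite minor anti-chain, and $\excl(\Zcal)=\Gcal$. So the statement is exactly \eqref{pposg99fdhye} applied to this $\Zcal$. The two directions use subset-closedness of $\Cbbb$ and the existence of class obstructions below every non-member.

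\emph{Forward direction.} Suppose $\Gcal\in\Cbbb$, and suppose for contradiction that some $\Qcal\in{\sf OBS}(\Cbbb)$ satisfies $\Qcal\leq_{*}\obs(\Gcal)$.
\begin{enumerate}
\item By definition, $\Qcal=\obs(\Hcal)$ for some $\Hcal\in\cobs(\Cbbb)$.
\item By \eqref{ppog99fdhye}, $\obs(\Hcal)\leq_*\obs(\Gcal)$ gives $\Hcal\subseteq\Gcal$.
\item Since $\Cbbb$ is subset-closed, $\Hcal\in\Cbbb$.
\end{enumerate}
This contradicts $\Hcal\in\cobs(\Cbbb)$, since class obstructions are by definition not in $\Cbbb$.

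\emph{Reverse direction.} Suppose $\Gcal\notin\Cbbb$. We need some $\Hcal\in\cobs(\Cbbb)$ with $\Hcal\subseteq\Gcal$. Once we have it, \eqref{ppog99fdhye} gives $\obs(\Hcal)\leq_*\obs(\Gcal)$, and $\obs(\Hcal)\in{\sf OBS}(\Cbbb)$, which is what we want.

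To find $\Hcal$, look at the family of minor-closed subclasses of $\Gcal$ that are not in $\Cbbb$; it is nonempty because it contains $\Gcal$. We need a $\subseteq$-minimal member, and this is the one step needing care. By \eqref{ppog99fdhye}, a strictly descending chain of minor-closed classes gives a strictly ascending chain of obstruction sets under $\leq_*$. So minimal elements exist provided $\leq_*$ admits no infinite strictly ascending chains below a fixed finite anti-chain.

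Note that $\leq_*$ is a relation between minor anti-chains (Smyth extension), so the descending chain of classes corresponds to a sequence of antichains each Smyth-above the previous, not to a single increasing antichain. If that direct argument does not go through, I would fall back on the paper's standing remark that the existence of ${\sf cobs}(\Cbbb)$ follows from the Graph Minors Theorem, and simply invoke that existence.

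Any minimal member $\Hcal$ of this family lies in $\cobs(\Cbbb)$. Indeed, any minor-closed class $\Hcal'\subsetneq\Hcal$ is also a subclass of $\Gcal$, so minimality of $\Hcal$ in the family forces $\Hcal'\in\Cbbb$. Hence $\Hcal$ is $\subseteq$-minimal among all minor-closed classes not in $\Cbbb$. This completes the reverse direction and the proof.
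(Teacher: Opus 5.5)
Your route is the paper's: the corollary is \eqref{pposg99fdhye} with $\Zcal=\obs(\Gcal)$. Your forward direction (via \eqref{ppog99fdhye} and subset-closedness) is correct, and so is your check that a $\subseteq$-minimal non-member inside $\Gcal$ lies in $\cobs(\Cbbb)$.

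The paragraph on why such a minimal non-member exists is wrong as written, in two ways.
\begin{enumerate}
\item \textbf{Direction.} By \eqref{ppog99fdhye}, inclusion of classes corresponds to $\leq_{*}$ in the same direction. A strictly descending chain of classes therefore yields a strictly \emph{descending} $\leq_{*}$-chain of obstruction sets, not an ascending one.
\item \textbf{The condition you ask for is false.} Let $\mathcal{L}_{i}$ be the class of graphs all of whose components are paths on at most $i$ vertices. These form an infinite strictly increasing chain inside $\Gcal^{3}$. Hence the sets $\obs(\mathcal{L}_{i})$ form an infinite strictly ascending $\leq_{*}$-chain below $\obs(\Gcal^{3})$.
\end{enumerate}
So that direct argument cannot work, and as it stands your reverse direction rests only on the fallback citation.

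What you need is that there is no infinite strictly descending chain $\mathcal{K}_{0}\supsetneq\mathcal{K}_{1}\supsetneq\cdots$ of minor-closed classes. This follows in two lines from the Graph Minors Theorem. Choose $G_{i}\in\mathcal{K}_{i}\setminus\mathcal{K}_{i+1}$. By well-quasi-ordering there are $i<j$ with $G_{i}\leq G_{j}$. But $G_{j}\in\mathcal{K}_{j}\subseteq\mathcal{K}_{i+1}$, and $\mathcal{K}_{i+1}$ is minor-closed, so $G_{i}\in\mathcal{K}_{i+1}$, a contradiction. Equivalently, $\leq_{*}$ is well-founded on minor anti-chains. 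The open $\omega^{2}$-wqo question concerns infinite anti-chains under $\leq_{*}$, which play no role here.

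Consequently, starting from $\Gcal\notin\Cbbb$, you cannot pass to strictly smaller minor-closed non-members forever, so the minimal $\Hcal\subseteq\Gcal$ exists. This is exactly what the paper packs into its remark that the existence of $\cobs(\Cbbb)$ follows from the Graph Minors Theorem. With it made explicit, your proof is complete.
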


\subsection{Algorithms for  deciding limit-densities}
Notice now that every graph in the obstructions mentioned {in the proof of} \cref{cor_all} is either $K_4$ or is  $K_4$-

minor-free. This allows the derivation of linear algorithms using \cref{cor_asdghf}.
By \cref{cor_asdghf}, \cref{klk89sghds3f2w}, \ref{lem_asver}, and the fact that $|{\sf OBS}(\Cbbb_{<1})|=3$ and  $|{\sf OBS}(\Cbbb_{<\nicefrac{3}{2}})|=33$ (\cref{cor_all}), the following hold.

\begin{theorem}

\label{th_fitrop}
One can construct an algorithm that, given an anti-chain $\Zcal$, decides whether
$\delta(\excl(\Zcal))< 1$ in $\Ocal(n)$ time, where $n = n(\Zcal)$. 
\end{theorem}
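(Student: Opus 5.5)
The algorithm is a direct instantiation of \cref{klk89sghds3f2w} with the explicit second-order obstruction set from \ref{lem_asver}, namely ${\sf OBS}(\Cbbb_{<1})=\{\Zcal_1,\Zcal_2,\Zcal_3\}$ with $\Zcal_1=\{P_4,K_{1,3}\}$, $\Zcal_2=\{K_3,2\cdot K_2\}$ and $\Zcal_3=\{K_{1,3},K_3\}$. Given the anti-chain $\Zcal$, we have $\obs(\excl(\Zcal))=\Zcal$. By \cref{klk89sghds3f2w}, $\delta(\excl(\Zcal))<1$ holds if and only if $\Zcal_i\nleq_{*}\Zcal$ for every $i\in[3]$. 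The algorithm therefore checks, for each $i\in[3]$, whether every $Z\in\Zcal$ contains some graph of $\Zcal_i$ as a minor. It answers ``yes'' exactly when all three checks fail.

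For the running time I will invoke \cref{cor_asdghf} with $\mathbb{Y}=\{\Zcal_1,\Zcal_2,\Zcal_3\}$, so $r=3$, $q=2$ and $y=4$. First I verify its hypotheses: every $\Zcal_i$ contains a planar graph, since all of these graphs are planar, and every graph in $\Zcal_i$ has at most $4$ vertices. The corollary then gives time $\Ocal(3\cdot 2\cdot 2^{\poly(4)}\cdot n)=\Ocal(n)$, where $n=n(\Zcal)$, because all other factors are absolute constants.

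One subtlety needs a remark. \cref{lem_vegdgdg} is stated with running time linear in $|V(Z)|$, and its proof needs $|E(Z)|$ to be linear in $|V(Z)|$ when it calls \cref{prop_asdfvert}. The treewidth test of \cite{Bodlaender96ALinear} handles this. If a graph $Z$ has treewidth above the constant $g(4)$, it contains the planar member of $\Zcal_i$ and we answer immediately. Otherwise $Z$ has bounded treewidth, hence $\Ocal(|V(Z)|)$ edges, and the test runs in time linear in $|V(Z)|$. A too-dense input graph can be rejected early by counting edges, since bounded treewidth bounds the edge density. Summing over $Z\in\Zcal$ gives $\Ocal(n)$.

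I expect this edge-count point to be the only real obstacle. One must make sure the input size $n(\Zcal)$, which counts only vertices, really bounds the total work even when some $Z$ is dense. I plan to resolve it with the density check above: if $|E(Z)|>c\,|V(Z)|$ for the constant $c$ determined by treewidth $g(4)$, then $Z$ already has large treewidth, so it contains $K_3$ and $P_4$ as minors. This inspection can be performed while reading the first $c|V(Z)|+1$ edges, so it stays linear.
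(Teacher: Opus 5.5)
Your proposal is correct and follows the paper's argument: the paper also obtains this theorem by plugging ${\sf OBS}(\Cbbb_{<1})=\{\Zcal_1,\Zcal_2,\Zcal_3\}$ from \ref{lem_asver} into \cref{klk89sghds3f2w}, then running the Smyth-minor test of \cref{cor_asdghf} with constant $r$, $q$, $y$, which gives $\Ocal(n)$ time. Your extra remark on dense members of $\Zcal$ is a valid refinement: any $Z$ with $|E(Z)|>|V(Z)|$ already contains $K_3$ and $P_4$ as minors, hence a member of each $\Zcal_i$. This is consistent with how \cref{lem_vegdgdg} achieves running time linear in $|V(Z)|$.
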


\begin{theorem}
\label{th_secsec}
One can construct an algorithm that, given an anti-chain $\Zcal$, decides whether
$\delta(\excl(\Zcal))< \frac{3}{2}$ in $\Ocal(n)$ time, where $n =  n(\Zcal)$.
\end{theorem}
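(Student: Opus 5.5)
The algorithm first computes the fixed finite family ${\sf OBS}(\Cbbb_{<\nicefrac{3}{2}})$ and then applies \cref{klk89sghds3f2w}. Since $\obs(\excl(\Zcal))=\Zcal$ for an anti-chain $\Zcal$, we have $\delta(\excl(\Zcal))<\frac{3}{2}$ if and only if no $\Ycal\in{\sf OBS}(\Cbbb_{<\nicefrac{3}{2}})$ satisfies $\Ycal\leq_{*}\Zcal$. By \ref{lem_asver}, ${\sf OBS}(\Cbbb_{<\nicefrac{3}{2}})=\{\Zcal_4,\Zcal_5,\Zcal_6\}\cup\{\Zcal^G\mid G\in\Acal\}$, and by \cref{cor_all} it has exactly $33$ members. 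Each member is an explicit finite set that can be hard-coded: $\Zcal_4,\Zcal_5,\Zcal_6$ are drawn in \cref{fig_obsCDM}, and each $\Zcal^G$ is obtained from \eqref{eq_size_b} by a finite computation on graphs with at most $11$ vertices. Consequently the parameters $r=33$, $q=\max_{\Ycal}|\Ycal|$ and $y\leq 11$ are absolute constants.

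Next I check the hypothesis of \cref{cor_asdghf}, namely that every $\Ycal$ in the family contains a planar graph. For $\Zcal_4,\Zcal_5,\Zcal_6$ this is immediate from the figures; for instance $D_1$, $M_1$ and $C_1$ are forests. For $\Zcal^G$ with $G\in\Acal$, the graph $(n+1)\cdot K_1$, where $n=|V(G)|$, lies in $\minors(K_{n+1})\setminus\minors(G)$. Hence some minor-minimal element $H$ of this difference is a minor of it, so $H$ is edgeless and planar. Every graph in $\mathsf{conn}(H)$ is then a tree. Each remaining member of $\mathsf{conn}(\cdot)$ of some element of ${\sf min}(\minors(K_{n+1})\setminus\minors(G))$ that is kept by the outer ${\sf min}$ is either this tree or incomparable with it. To be safe, I would instead argue directly that $\mathsf{min}$ of a finite set keeps, for each element, some element below it. The tree (or something below it, hence planar) then survives in $\Zcal^G$.

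Alternatively, I would use the observation stated before these theorems that every graph in these obstructions is $K_4$ or $K_4$-minor-free. Since $K_4$ itself is planar, every member is planar.

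With the hypothesis verified, \cref{cor_asdghf} decides in time $\Ocal(r\cdot q\cdot 2^{\poly(y)}\cdot n(\Zcal))=\Ocal(n)$ whether some $\Ycal$ in the family satisfies $\Ycal\leq_{*}\Zcal$. The algorithm answers ``yes'' exactly when no such $\Ycal$ exists, and correctness follows from \cref{klk89sghds3f2w}.

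The main obstacle is only bookkeeping: making sure the constants are truly independent of the input and that the planarity hypothesis of \cref{lem_vegdgdg} holds for all $33$ sets. I would discharge it through the $K_4$/$K_4$-minor-free remark, or through the edgeless-graph argument above.
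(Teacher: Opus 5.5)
Your proposal is correct and is essentially the paper's argument: hard-code the $33$ second-order obstructions $\{\Zcal_4,\Zcal_5,\Zcal_6\}\cup\{\Zcal^G\mid G\in\Acal\}$ of $\Cbbb_{<\nicefrac{3}{2}}$, get correctness from \cref{klk89sghds3f2w}, and get linear time from \cref{cor_asdghf} with $r$, $q$, $y$ absolute constants. The paper discharges the planarity hypothesis via the remark that all these obstructions are $K_4$ or $K_4$-minor-free; your alternative edgeless-graph argument (the minimal excluded minor below $(n+1)\cdot K_1$ has only trees as connectivizations, so some forest survives the outer ${\sf min}$) is also valid and slightly more self-contained.
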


By \ref{theo_avjkrrv}, \ref{toipkdi0ol}, \cref{cor_all}, and \cref{klk89sghds3f2w},  we can apply \cref{cor_asdghf} for $r = 2^{\Ocal(x_{\delta})},$  $q = 2^{\Ocal(x_{\delta})}$ and $y = \Ocal(x_{\delta})$ and derive the following.

\begin{theorem}
\label{th_oththe}
 One can construct an algorithm that, given some $\delta\in [0,\nicefrac{3}{2})$, and some minor anti-chain $\Zcal$, decides whether
$\delta(\excl(\Zcal))\leq \delta$ in  $2^{\poly(x_{\delta})}\cdot n$
time, where $x_{\delta}$ is defined as in \eqref{label_delta_x} and $n:= n(\Zcal)$.
\end{theorem}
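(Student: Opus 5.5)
The algorithm first computes the finite collection ${\sf OBS}(\Cbbb_{\leq \delta})$ and then tests it against $\Zcal$ using \cref{klk89sghds3f2w}, which characterizes membership in $\Cbbb_{\leq\delta}$ by second-order obstructions. Given $\delta$, we compute $x_\delta$ from \eqref{label_delta_x}. If $\delta<1$, we list all trees in $\Tcal_{x_\delta+2}$. If $\delta\in[1,\nicefrac{3}{2})$, we compute $\beta_i(\delta)$ for $i\in[4]$ and list the sets $\Ecal^i_{\beta_i(\delta)}$. Each listed graph $G$ has $\Ocal(x_\delta)$ vertices by \cref{cor_all}(i). By \cref{prop_bountsp} there are $2^{\Ocal(x_\delta)}$ such graphs, and they can be enumerated in $2^{\poly(x_\delta)}$ time by brute force over labeled graphs with $\Ocal(x_\delta)$ vertices, discarding isomorphic copies. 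The minimization that produces $\Ecal_\delta={\sf min}(\Ucal_\delta)$ uses pairwise minor tests between graphs of size $\Ocal(x_\delta)$. Each test can be done by brute force in $2^{\poly(x_\delta)}$ time, so the whole step costs $2^{\poly(x_\delta)}$.

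Next, for each listed $G$ we compute $\Zcal^G$ directly from \eqref{eq_size_b}. This means enumerating all graphs on at most $|V(G)|+1$ vertices, keeping those that are not minors of $G$, taking the minor-minimal ones, forming ${\sf conn}$ of each (adding bridges between components in all minimal ways), and minimizing again. All objects have $\Ocal(x_\delta)$ vertices, so this also takes $2^{\poly(x_\delta)}$ time. The sets $\Zcal_1,\Zcal_2,\Zcal_4,\Zcal_5$ and the $\Zcal^G$ for $G\in\Acal$ are constants. We then apply ${\sf MIN}$ under $\leq_*$, as in \cref{th_kifinalop}(i) or (ii). This needs $2^{\Ocal(x_\delta)}$ squared Smyth comparisons, each a product of $2^{\Ocal(x_\delta)}$ minor tests on small graphs, so it stays within $2^{\poly(x_\delta)}$. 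Strictly speaking, the minimization is not even needed for correctness. Non-minimal second-order obstructions are also families $\obs(\Hcal)$ with $\delta(\Hcal)>\delta$, so testing the whole unminimized collection gives the same answer.

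Finally, we invoke \cref{cor_asdghf} with $\mathbb{Y}$ equal to this collection, where $r=2^{\Ocal(x_\delta)}$, $q=2^{\Ocal(x_\delta)}$ and $y=\Ocal(x_\delta)$. The hypothesis that every $\Ycal\in\mathbb{Y}$ contains a planar graph must be checked. For $\delta<1$ this holds because every graph involved is $K_3$-minor-free or equals $K_3$. For $\delta\ge1$, \cref{cl_vrgsdf} shows that every graph in $\Zcal^G$ is $K_4$ or $K_4$-minor-free, hence planar; the fixed sets are also checked directly to be planar. The test runs in $r\cdot q\cdot 2^{\poly(y)}\cdot n=2^{\poly(x_\delta)}\cdot n$ time. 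By \cref{klk89sghds3f2w} and $\obs(\excl(\Zcal))=\Zcal$, we answer yes exactly when no $\Ycal\in\mathbb{Y}$ satisfies $\Ycal\leq_*\Zcal$.

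The main obstacle is keeping the preprocessing, especially the computation of $\Zcal^G$ and the ${\sf MIN}$ step, inside $2^{\poly(x_\delta)}$ rather than worse. The plan is to bound every enumerated graph by $\Ocal(x_\delta)$ vertices using \cref{cor_all}(i). With that bound, each brute-force minor test and isomorphism test on such graphs costs $2^{\poly(x_\delta)}$, and the number of candidates is $2^{\Ocal(x_\delta^2)}$, which is still $2^{\poly(x_\delta)}$.
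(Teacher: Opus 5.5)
Your proposal is correct and follows essentially the paper's route: the paper simply combines \cref{th_kifinalop}, the size and count bounds of \cref{cor_all}, and \cref{klk89sghds3f2w}, and then invokes \cref{cor_asdghf} with $r=q=2^{\Ocal(x_\delta)}$ and $y=\Ocal(x_\delta)$. You additionally spell out two things the paper leaves implicit: computing the obstruction collection from $\delta$ within $2^{\poly(x_\delta)}$ time (and noting that the ${\sf MIN}$ step can be skipped), and checking the planar-member hypothesis of \cref{cor_asdghf}.
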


\medskip

 We stress that the result
 of \cref{prop_asdfvert} is  constructive in the sense that 
if $H$ is a minor of $G$, then it the algorithm also certifies it by providing a  minor model of $H$ in $G$. As a consequence of this, \cref{cor_asdghf} is constructive in the same 
way: in case of a positive answer it finds for every $Z\in\Zcal$ 
a minor model of a graph  $Y\in \Ycal$  in $Z$,
and in case of a negative answer it finds some $Z\in\Zcal$ which minor-excludes all graphs in $\Ycal$ for every $\Ycal\in\Ybbb$.

For \cref{th_oththe}, this implies that if the algorithm gives a negative answer, then this is certified by some  $\Ycal\in{\sf OBS}(\Cbbb_{\leq \delta})$ where $\Ycal\leq_{*} \obs(\Zcal)$. 
This, in turn, is certified by producing, for every graph $Z\in \obs(\Zcal)$, 
a minor model of some graph of $\Ycal$.
These minor models may serve as a proof that  $\delta(\excl(\Zcal))>\delta$.
If, instead, the algorithm gives a positive answer, then the absence of some  $\Ycal\in{\sf OBS}(\Cbbb_{\leq \delta})$ where $\Ycal\leq_{*} \obs(\Zcal)$ is the proof that 
$\delta(\excl(\Zcal))\leq \delta$. Analogous remarks can be made for the algorithms of \cref{th_fitrop} and \cref{th_secsec}.
\medskip

\subsection{Computing limit-densities} Another relevant question is whether it is possible to compute the \textsl{exact} value of $\delta(\excl(\Zcal)),$ given a minor anti-chain $\Zcal$.
Although it is unclear whether this problem is decidable in general, it is certainly decidable when restricted to the interval $[0,\nicefrac{3}{2})$, even if there are infinitely many values of $\delta(\excl(\Zcal))$ in this interval.

\begin{theorem}
\label{op_de_ee}
 One can construct an algorithm that, given an anti-chain $\Zcal$, either reports that $\delta(\excl(\Zcal))\geq  \frac{3}{2}$ or outputs the value of $\delta(\excl(\Zcal))$ in $2^{\poly(n)}$ time, where $n\coloneqq n(\Zcal)$.
\end{theorem}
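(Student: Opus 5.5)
First call the algorithm of \cref{th_secsec} on $\Zcal$. If it answers that $\delta(\excl(\Zcal))\geq \nicefrac{3}{2}$, we report this and stop. Otherwise $\delta(\excl(\Zcal))<\nicefrac{3}{2}$, so by \eqref{eq_p_kil} it equals one of the values $\frac{x}{x+1}$ with $x\in\Nbbb$, or $\frac{3x+2y}{2x+y+1}$ with $x\geq 1$ and $y\in\{0,1,2\}$, or it is $-\infty$. The plan is to walk up this increasing sequence and call the algorithm of \cref{th_oththe} as an oracle. The case $-\infty$ is handled first by \cref{obs_nulnul}: it happens exactly when some graph of $\Zcal$ is edgeless. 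After that we test $\delta(\excl(\Zcal))\leq\delta$ for $\delta=0,\nicefrac12,\nicefrac23,\dots$ in order, and the first $\delta$ that passes is the answer. Because $\Lbbb\cap[0,\nicefrac32)$ is known exactly, the smallest $\delta\in\Lbbb$ with $\delta(\excl(\Zcal))\leq\delta$ is the value itself. This is where we use that the value lies in $\Lbbb$.

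The main obstacle is to bound how many candidates we may need to test, and how large $x_\delta$ gets, as a function of $n=n(\Zcal)$. Infinitely many candidates exist, and one of them lies near the accumulation point $1$.

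\textbf{Accumulation point $1$.} Call \cref{th_fitrop} first. If it says $\delta(\excl(\Zcal))<1$, only the candidates $\frac{x}{x+1}$ remain. Otherwise only $1$ and the candidates $\frac{3x+2y}{2x+y+1}$ remain.

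\textbf{Bounding $x$.} I want to show that $x\leq \poly(n)$ at the true value. Suppose the value is $\delta\in(0,1)$ with corresponding $x=x_\delta$. By \cref{hio8io3e}, the class $\excl(\Zcal)$ contains some class from $\{\Gcal^{1},\Gcal^{2}\}\cup\{\Gcal^{T}\mid T\in\Tcal_{x_{\delta'}+2}\}$ for every smaller $\delta'$. For $\delta'$ just below $\delta$ this forces $\Gcal^{T}\subseteq\excl(\Zcal)$ for some tree $T$ on $x+1$ vertices. So every graph of $\Zcal$ must fail to be a minor of $k\cdot T$ for all $k$. On the other hand, by \cref{hio8io3e} applied at $\delta$, we have $\Gcal^{T'}\not\subseteq\excl(\Zcal)$ for every tree $T'$ on $x+2$ vertices. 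Hence $\obs(\Gcal^{T'})=\Zcal^{T'}\leq_*\Zcal$. By \cref{cor_all} every graph of $\Zcal^{T'}$ has at most $x+3$ vertices, and each graph of $\Zcal$ contains one of them.

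I plan to turn this around. Pick a tree $T'$ whose associated obstructions are all large, for instance a long path: $\Zcal^{P_{x+2}}$ consists of $P_{x+3}$, $K_3$ and $K_{1,3}$. From this I aim to deduce that if $x>n$, then every graph of $\Zcal$ contains $K_3$ or $K_{1,3}$, or else it is a path of length $\leq n$. That in turn would give the same Smyth relation for all larger $x$. The upshot is that the value stabilizes once $x>n$, so only $x\leq n+O(1)$ needs testing.

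The same argument should work in $[1,\nicefrac32)$. There I would use \cref{theo_vrighjivmn} and the graphs $\Ecal^1_b$, which have $2b+1$ vertices, and conclude $x_\delta=O(n)$. Making this stabilization rigorous is the step I expect to be hardest. A fallback is to compare sizes directly. Any obstruction in $\Zcal^G$ that is needed must be a minor of some graph of $\Zcal$, so it has at most $n$ vertices. Obstructions built from the $\Ecal^i_b$ for huge $b$ therefore only change in ways bounded by $n$.

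\textbf{Running time.} With at most $\poly(n)$ candidates, each oracle call costs $2^{\poly(x_\delta)}\cdot n=2^{\poly(n)}$ by \cref{th_oththe}. The total is therefore $2^{\poly(n)}$.
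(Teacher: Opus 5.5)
Your outer loop matches the paper's: discard $-\infty$ via \cref{obs_nulnul}, separate the ranges with \cref{th_secsec} and \cref{th_fitrop}, then test the values of \eqref{eq_p_kil} in increasing order with \cref{th_oththe}. The gap is the step that gives the $2^{\mathsf{poly}(n)}$ bound, namely a polynomial bound on how far up the list the true value can sit. You do not prove it, and the argument you sketch fails for two reasons.

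First, your Smyth relation is reversed. By \eqref{eq_oksui}, $\Gcal^{T'}\not\subseteq\excl(\Zcal)$ is equivalent to $\Zcal^{T'}\not\leq_*\Zcal$, i.e.\ to some graph of $\Zcal$ lying in $\Gcal^{T'}$. The relation $\Zcal^{T'}\leq_*\Zcal$ that you derive would mean $\Gcal^{T'}\subseteq\excl(\Zcal)$, hence $\delta(\excl(\Zcal))\geq\frac{x+1}{x+2}$, contradicting your assumption. So nothing forces the graphs of $\Zcal$ to contain $K_3$, $K_{1,3}$ or $P_{x+3}$.

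Second, the bound you aim for is false. Take $\Zcal=\{K_3,K_{1,m},P_m\}$ with $m\geq 4$, so $n=2m+4$. Then $\excl(\Zcal)$ consists of the forests whose trees have fewer than $m$ leaves and no path on $m$ vertices. The largest such tree is the spider with $m-1$ legs of length $\lfloor\frac{m-2}{2}\rfloor$, so $\delta(\excl(\Zcal))=\frac{x}{x+1}$ with $x=(m-1)\lfloor\frac{m-2}{2}\rfloor=\Theta(n^2)$. Spider-shaped trees of triangles (e.g.\ for $\Zcal=\{C_4,\mathscr{D}_m,\mathscr{C}_m\}$) show the same quadratic growth of $x_\delta$ in $[1,\nicefrac32)$. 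A search cut off at $x\leq n+O(1)$ or $x_\delta=O(n)$ therefore misses the value. Your fallback remark about obstructions with at most $n$ vertices is not an argument.

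The missing idea is to turn the strict inequality into an explicit gap below the accumulation point, with the parameter $k$ controlled by $n$.
\begin{itemize}
\item If $\delta(\excl(\Zcal))<1$, then by \cref{adsfssbfbscf} and \eqref{eq_oksui} none of $\Zcal_1,\Zcal_2,\Zcal_3$ is $\leq_*\Zcal$. Hence $\Gcal^1\not\subseteq\excl(\Zcal)$, and some graphs of $\Zcal$ are minors of $K_{1,k}$ and of $P_k$ for some $k\leq n$. Then \cref{l664gdjd} gives $\delta(\excl(\Zcal))\leq 1-\frac{1}{k^2}$, so only $x\leq k^2-1$ needs testing.
\item If $1\leq\delta(\excl(\Zcal))<\frac32$, then \cref{theo_abkvaabo} likewise yields graphs of $\Zcal$ that are minors of $\mathscr{D}_k$, $\mathscr{C}_k$, $\mathscr{M}_k$ with $k\leq n$, and rules out every class of $\Abbb$. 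So \cref{theo_asdvre} gives $\delta(\excl(\Zcal))\leq\frac32-\frac{1}{4k^2+14}$. This leaves $O(n^2)$ candidates, each with $x_\delta=O(n^2)$, so each call to \cref{th_oththe} costs $2^{\mathsf{poly}(n)}\cdot n$.
\end{itemize}
Your correct observation, that $\Gcal^T\subseteq\excl(\Zcal)$ for some tree $T$ on $x+1$ vertices, can also be completed this way. Such a $T$ lies in $\excl(\Zcal)$, so it has fewer than $k$ leaves and no $k$-vertex path, and $x+1\leq k^2$ by \cref{hko_take}. \cref{tree_blokd_b} plays the same role in $[1,\nicefrac32)$.
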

\begin{proof}
From \cref{obs_nulnul}, we may directly assume that every graph in $\Zcal$ contains at least one edge (otherwise $\excl(\Zcal)$ is finite and the algorithm outputs $-\infty$).

Our algorithm checks first whether for some $\Ycal\in \{\Zcal_4,\Zcal_5,\Zcal_6\}\cup\{\Zcal^G\mid G \in \Acal\}$
it holds that $\Ycal\leq _{*}\Zcal$. This can be done in $\Ocal(n)$ time, where $n\coloneqq n(\Zcal)$, by \cref{cor_asdghf}. If the answer is positive, we can safely report that $\delta(\excl(\Zcal))\geq \frac{3}{2}$, by \cref{klk89sghds3f2w} and by \ref{lem_asver}. 
If the answer is negative, then we check whether for some $\Ycal\in \{\Zcal_1,\Zcal_2,\Zcal_3\}$
it holds that $\Ycal\leq _{*}\Zcal$. This can also be done in $\Ocal(n)$ time, by \cref{cor_asdghf}. We distinguish two cases, depending on the answer that we receive.\medskip

\noindent \textsl{Case 1}. If the answer is positive then we know that $1\leq \delta(\excl(\Zcal))<\frac{3}{2}$, by \cref{klk89sghds3f2w} and by \ref{lem_asver}. The fact that $\Ycal \not \leq_{*}\Zcal$ for every $\Ycal\in \{\Zcal_4,\Zcal_5,\Zcal_6\}\cup\{\Zcal^G\mid G \in \Acal\}$ along with \eqref{eq_oksui} implies that 
\begin{eqnarray}
    \label{eq_sdbbhhas}
    \text{for every } \Ycal\in \{\Zcal_4,\Zcal_5,\Zcal_6\}\cup\{\Zcal^G\mid G \in \Acal\} \text{, it holds that }\excl(\Ycal) \not \subseteq \excl(\Zcal).
\end{eqnarray}
When  $\Ycal\in \{\Zcal^G\mid G \in \Acal\}$,  \eqref{eq_sdbbhhas} implies that
$\Gcal \not \subseteq \excl(\Zcal)$ for every $\Gcal \in \Abbb$.
When  $\Ycal\in \{\Zcal_4,\Zcal_5,\Zcal_6\}$, 
\eqref{eq_sdbbhhas}
implies that there are three graphs $Z_4,Z_5,Z_6\in\Zcal$ such that $Z_4 \in \Dcal$, $Z_5 \in \Mcal$, and $Z_6 \in \Ccal$.
Observe now that there is some $k,$ depending on $Z_4,Z_5,Z_6,$ such that $\excl(\Zcal)$ excludes $\mathscr{D}_k$, $\mathscr{M}_{k}$, and $\mathscr{C}_k$ as minors. In particular, $k$ is the minimum integer such that $Z_4$ is a minor of $\mathscr{D}_k$, $Z_5$ is a minor of $\mathscr{M}_k$, and $Z_6$ is a minor of $\mathscr{C}_k$. Notice that $k$ is at most $\max\{|V(Z_4)|,|V(Z_5)|,|V(Z_6)|\} \leq  n$.
 
By \cref{theo_asdvre}, we know that $$\delta(\excl(\Zcal))\leq \frac{3}{2}-\frac{1}{4k^2 + 14}.$$ Notice that in order to detect each $Z_i$ for $i \in \{4,5,6\}$ we go through all graphs of $\Zcal$ and we find one that minor-excludes all graphs in $\Zcal_i$. Therefore, $Z_4,Z_5,Z_6$ can be computed in $\Ocal(n)$ time, given that we just need to check if they are minors of some $\mathscr{D}_k, \mathscr{C}_k,\mathscr{M}_k$ for some $k$, respectively.

Now we apply the algorithm of \cref{th_oththe} on $\Zcal$
for every $\delta= \frac{3b+a_i}{\,2b+c_i\,}$ with $i\in[4]$
and $b \in \Nbbb$ (or $b \in \Nbbb_{\geq1}$ in case $i= 1$) where $\delta \le \frac{3}{2} - \frac{1}{4k^2-14}$, likewise we can detect $\delta(\excl(\Zcal))$ among the values  

\begin{eqnarray}
\label{eq_vrehsdv}
\Delta\coloneqq \Big\{\frac{3b+a_i}{\,2b+c_i\,} \; \bigg| \; i\in[4], b \in \Nbbb \text{ (or } b\in \Nbbb_{\geq 1} \text{ in case } i=1), \text{ and}\; \frac{3b+a_i}{\,2b+c_i\,}\leq \frac{3}{2}-\frac{1}{4k^2 + 14} \Big\}.
\end{eqnarray}
Notice that $\Delta$ is a subset 
of $\Lbbb\cap [1,\nicefrac{3}{2})$. This is because if $i=1$ (so $(a_1,c_1)=(0,1)$), set $(x,y)=(b,0)$ then $\frac{3b}{2b+1}=\frac{3x+2y}{2x+y+1}$, if $i=2$ (so $(a_2,c_2)=(5,4)$), set $(x,y)=(b+1,1)$ then $\frac{3b+5}{2b+4}=\frac{3(b+1)+2}{2(b+1)+2}=\frac{3x+2y}{2x+y+1}$, if $i=3$ (so $(a_3,c_3)=(7,5)$), set $(x,y)=(b+1,2)$ then $\frac{3b+7}{2b+5}=\frac{3(b+1)+4}{2(b+1)+3}=\frac{3x+2y}{2x+y+1}$, and if $i=4$ (so $(a_4,c_4)=(10,7)$), set $(x,y)=(b+2,2)$ then $\frac{3b+10}{2b+7}=\frac{3(b+2)+4}{2(b+2)+3}=\frac{3x+2y}{2x+y+1}$ (see the definition of $\Lbbb$ in \eqref{eq_p_kil}).  

Observe that the set $\Delta$, defined in \eqref{eq_vrehsdv}, has at most $7k^2+19$ values. Given that $k$ is at most $n$, we can determine $\delta(\excl(\Zcal))$ by applying the algorithm of \cref{th_oththe} a number of 
 $7k^2+19 = \Ocal(n^2)$ times, which can be done in $\Ocal(2^{\poly(\max\{x_{\delta} \; \mid \; \delta \in \Delta\})}\cdot n^3)$. Since $\delta \leq  \frac{3}{2}-\frac{1}{4n^2 + 14}$ and $x_\delta$ is increasing, we have that $\max\{x_{\delta} \; \mid \; \delta \in \Delta\}\leq  3n^2+10$. Thus, we can determine $\delta(\excl(\Zcal))$ in $\Ocal(2^{\poly(n)}\cdot n^3)$ time. \medskip

\noindent\textsl{Case 2:} If the answer is negative,  then this means that for every $\Ycal\in \{\Zcal_1,\Zcal_2,\Zcal_3\}$ we have $\Ycal \not \leq_{*}\Zcal$. This along with \eqref{eq_oksui} implies that
\begin{eqnarray}
    \text{for every } \Ycal\in \{\Zcal_1,\Zcal_2,\Zcal_3\} \text{, it holds that } \excl(\Ycal) \not \subseteq \excl(\Zcal).\label{aghuaha}
\end{eqnarray}
When  $\Ycal = \Zcal_1$, \eqref{aghuaha} implies that
$\Gcal^1 \not \subseteq \excl(\Zcal)$.
When  $\Ycal\in \{\Zcal_2,\Zcal_3\}$, 
\eqref{aghuaha} implies there are two graphs $Z_2,Z_3\in\Zcal$ such that, $Z_2 \in \Gcal^2$ and $Z_3 \in \Gcal^3$. Notice that in order to detect each $Z_i$ for $i \in \{2,3\}$ we go through all graphs of $\Zcal$ and we find one that minor-excludes all graphs in $\Zcal_i$. Therefore, $Z_2,Z_3$ can be computed in $\Ocal(|\Zcal| \cdot n)$ time, by \cref{lem_vegdgdg}. This, in turn, implies that there is some $k$ depending on $Z_2,Z_3$ such that $\excl(\Zcal)$ excludes $K_{1,k}$ and $P_{k}$ as minors. In particular, $k$ is the minimum integer such that $Z_2$ is a minor of $K_{1,k}$ and $Z_3$ is a minor of $P_k$.  Notice as before that $k$ is at most $\max\{|V(Z_2)|,|V(Z_3)|\} \leq  n$. By \cref{l664gdjd}, we know that $\delta(\excl(\Zcal))\leq 1-\frac{1}{k^2}$. Now we apply the algorithm of \cref{th_oththe} on $\Zcal$
for every $\delta= \frac{i}{i+1}$, where $i \in \Nbbb$
and $\delta \leq  1-\frac{1}{k^2}$. Equivalently, we can determine $\delta(\excl(\Zcal))$ among the 
values $
\Delta\coloneqq \Big\{\frac{i}{i+1} \; \mid \; i \in \Nbbb \text{ and}\; \frac{i}{i+1}\leq 1-\frac{1}{k^2} \Big\}$. 
Observe that $|\Delta|\leq k^2$ {and $\max\{x_\delta\; \mid \; \delta\in\Delta\}\le n^2-1$}. Thus, similar to Case 1, we can determine $\delta(\excl(\Zcal))$ in $\Ocal(2^{\poly(n)}\cdot n^3)$ time.
\end{proof}

\section{Research directions}
This paper raises the broader question of obtaining precise characterizations 
of all minor-closed graph classes with a given limiting density. To this end, we 
exploit the expressive power of parametric graphs, which we determine for 
every limiting density in the interval $[0,\nicefrac{3}{2})$.

Determining which (rational, according to \cite{KapadiaN20Densities}) numbers 
occur as limiting densities—i.e., identifying the set $\Lbbb$—is a challenging  
open problem initiated by Eppstein \cite{Eppstein10Densities}, who determined 
$\Lbbb\cap [0,\nicefrac{3}{2})$. Our results extend Eppstein’s work in a 
qualitative way: not only do we know the possible values of the limiting densities in 
this interval, but we can also describe \textsl{precisely} which minor-closed classes 
achieve each of them.

Extending this understanding to limiting densities $\delta\ge\frac{3}{2}$ is 
substantially more challenging. Some progress is still possible for 
$\delta \in [\nicefrac{3}{2},2)$, where the set $\Lbbb\cap[\nicefrac{3}{2},2)$ 
has been completely characterized by McDiarmid and Przykucki \cite{McDiarmidP19On}. 
Moreover, as observed in \cite{Eppstein10Densities}, the accumulation points of 
$\Lbbb$ within this interval  also form an infinite set; in particular,
$\Bbbb\cap[\nicefrac{3}{2},2)=\{x+1\mid x\in\Lbbb\cap[\nicefrac{1}{2},1)\}$. 
A natural next step is therefore to combine our approach with the results of 
\cite{McDiarmidP19On} in order to obtain a full characterization of the 
minor-closed graph classes in $\Cbbb_{\leq \delta}$ and $\Cbbb_{<\delta}$ for every 
$\delta\in[\nicefrac{3}{2},2)$. We believe that the techniques introduced in this 
paper may contribute meaningfully to this direction.

When $\delta\ge 2$, the situation becomes even less clear, as the set of limiting 
densities beyond this point remains  unknown (notice that $2$ is the first point that is an accumulation point of accumulation points).
Motivated by our 
positive results below $\nicefrac{3}{2}$, we conjecture that, for every rational 
$\delta$, the classes $\Cbbb_{\leq \delta}$ and $\Cbbb_{<\delta}$ admit a finite obstruction

characterization.

\begin{conjecture}
\label{conj_all}
For every $\delta\in\Qbbb$, both $\cobs(\Cbbb_{\leq δ})$ and $\cobs(\Cbbb_{<δ})$ are finite.
\end{conjecture}

This conjecture can be viewed as a special instance of the $\omega^2$-wqo 
conjecture. Given that the well-quasi-ordering of graphs under the minor relation 
is not expected to admit a constructive proof 
(see \cite{FriedmanRS87them,krombholz2019upper}), a constructive resolution of the 
$\omega^2$-wqo conjecture appears similarly unlikely. Nevertheless, this does not 
preclude the existence of a constructive proof of \cref{conj_all}. Motivated by 
this possibility, we formulate the following refinement of 
\cref{conj_all}.

\begin{conjecture}
\label{conj_all1}
It is possible to construct a function   $f:\Qbbb\to\Nbbb$ such that, for every 
$\delta\in\Qbbb$, if $\Gcal\in\cobs(\Cbbb_{\leq δ})$ (or $\Gcal\in\cobs(\Cbbb_{<δ})$), 
then every graph in $\obs(\Gcal)$ has at most $f(\delta)$ vertices.
\end{conjecture}

Our results show that \cref{conj_all1} holds for all
$\delta \in [0, \nicefrac{3}{2})$, and the corresponding bounds on the function
$f$ are provided in \cref{cor_all}. It is an interesting question whether  further progress can be made
on ``larger fragments'' of \cref{conj_all} for densities exceeding $\nicefrac{3}{2}$, based on constructive proofs and, preferably, on explicit bounds.
We believe that the analogous question on \cref{conj_all1}
is much harder, especially when $\delta>3$.

\end{document}